\theoremstyle{plain}
\newtheorem{Thm}{Theorem}[section]
\newtheorem{Cor}[Thm]{Corollary}
\newtheorem{Lem}[Thm]{Lemma}
\newtheorem{Prop}[Thm]{Proposition}
\theoremstyle{definition}		% makes these non-italic
\newtheorem{Def}[Thm]{Definition}
\newtheorem{Rem}[Thm]{Remark}
\newtheorem{Ex}[Thm]{Example}
\newtheorem{con}[Thm]{Construction}
\newcommand{\ar}{{\mathcal{R}}}
\newcommand{\el}{{\mathcal{L}}}
\newcommand{\eh}{{\mathcal{H}}}
\newcommand{\dee}{{\mathcal{D}}}
\newcommand{\jay}{{\mathcal{J}}}
\newcommand{\leqel}{{\leq_{\mathcal{L}}}}
\newcommand{\leqj}{{\leq_{\mathcal{J}}}}
\newcommand{\geqj}{{\geq_{\mathcal{J}}}}
\renewcommand{\l}{\mbox{${\langle}$}}
\renewcommand{\r}{\mbox{${\rangle}$}}
\renewcommand{\phi}{\varphi}
\newcommand{\N}{\mathbb{N}}
\DeclareMathOperator{\Inj_X}{\mathcal{I}\hspace{-0.1em}\mathit{nj}_X}
\newenvironment{thmenumerate}{\begin{enumerate}[label=\textup{(\arabic*)},leftmargin=10mm]}{\end{enumerate}}
\begin{document}
\title{On minimal ideals in pseudo-finite semigroups}
\author{Victoria Gould, Craig Miller, Thomas Quinn-Gregson, Nik Ru{\v s}kuc}
\address{Department of Mathematics, University of York, UK, YO10 5DD}
\email{victoria.gould@york.ac.uk, craig.miller@york.ac.uk}
\address{~\vspace{-1em}}
\email{tquinngregson@gmail.com}
\address{School of Mathematics and Statistics, St Andrews, Scotland, UK, KY16 9SS}
\email{nik.ruskuc@st-andrews.ac.uk}
\maketitle
\begin{abstract} 
A semigroup $S$ is said to be right pseudo-finite if the universal right congruence can be generated by a finite set $U\subseteq S\times S$, and there is a bound on the length of derivations for an arbitrary pair $(s,t)\in S\times S$ as a consequence of those in $U$.
This article explores the existence and nature of a minimal ideal in a right pseudo-finite semigroup.  
Continuing the theme started in an earlier work by Dandan et al., we show that in several natural classes of monoids, right pseudo-finiteness implies the existence of a completely simple minimal ideal.  This is the case for orthodox monoids, completely regular monoids and right reversible monoids, which include all commutative monoids.
We also show that certain other conditions imply the existence of a minimal ideal, which need not be completely simple; notably, this is the case for semigroups in which one of the Green's pre-orders $\leqel$ or $\leqj$ is left compatible with multiplication.
Finally, we establish a number of examples of pseudo-finite monoids without a minimal ideal.  We develop an explicit construction that yields such examples with additional desired properties, for instance, regularity or $\jay$-triviality.
\end{abstract}
~\\
\textit{Keywords}: Semigroup, congruence, pseudo-finite, minimal ideal.\\
\textit{Mathematics Subject Classification 2020}: 20M10, 20M17, 20M30.

\section{Introduction}
\label{sec:intro}

The notion of being pseudo-finite for semigroups arises from a variety of sources and may be expressed in several different ways, as explained in \cite{Dandan:2019}.  The simplest way of approaching this condition is via the universal relation, regarded as a one-sided congruence.  Informally, a semigroup $S$ is right (resp.\ left) pseudo-finite if the universal right (resp.\ left) congruence on $S$ is finitely generated and there is a bound on the length of sequences required to relate any two elements.  A more precise definition will be given in Section~\ref{sec:pfintro}.\par
The property of being (left) pseudo-finite was introduced by White in \cite{White:2017} in the language of ancestry.  This work was motivated by a conjecture of Dales and {\. Z}elazko, which states that a unital Banach algebra in which every maximal left ideal is finitely generated is necessarily finite dimensional.  One of the main results of \cite{White:2017} states that a monoid $M$ is left pseudo-finite if and only if the augmentation ideal $l_1^0(M)$ is finitely generated \cite[Theorem 1.7]{White:2017}.

In \cite{Dandan:2019}, Dandan et al.\ systematically studied the condition of being left pseudo-finite, 
within the broader context of semigroups having finitely generated universal left congruence.
These two conditions are certainly finiteness conditions (that is, every finite semigroup satisfies them).  
The latter condition was shown to be equivalent to several other concepts, which have previously been studied in different areas, e.g.\ the homological finiteness property of being type left-$FP_1$ \cite[Theorem 3.10]{Dandan:2019}.  For a group $G,$ the universal left (or right) congruence on $G$ is finitely generated if and only if $G$ is  a finitely generated group, and $G$ is left (or right) pseudo-finite if and only if it is finite \cite[Proposition 2.7]{Dandan:2019}.  In fact, it was noted in \cite{White:2017} that for weakly right cancellative monoids, which include groups, being left pseudo-finite coincides with being finite.
This is far from true for arbitrary semigroups and monoids, as will become apparent.  For example, any monoid with zero is left (and right) pseudo-finite.  Of course, a zero is precisely a trivial minimal ideal.

If a semigroup contains no proper ideals then it is said to be \emph{simple}.
A \emph{minimal} (left/right/two-sided) \emph{ideal} of a semigroup is a (left/right/two-sided) ideal containing no proper (left/right/two-sided) ideals.
If a semigroup $S$ has a minimal two-sided ideal, it is unique and is a simple subsemigroup.  
If a simple semigroup contains minimal left and right ideals it is said to be \emph{completely simple}.
One strand of \cite{Dandan:2019} concerns the existence and nature of a minimal ideal in a (left) pseudo-finite semigroup.
This was partly motivated by a question posed to Gould by Dales and White, asking whether every pseudo-finite semigroup is isomorphic to a direct product of a semigroup with zero by a finite semigroup.  This question was answered negatively in \cite[Example 7.7]{Dandan:2019}.  
On the other hand, it was shown in \cite{Dandan:2019} that every pseudo-finite semigroup that is inverse or a union of groups necessarily contains a completely simple minimal ideal.  It was noted in \cite[Remark 8.9]{Dandan:2019} that \cite[Theorem 8.1]{Dandan:2019} (which concerned the universal left congruence being finitely generated) could potentially be adapted to provide necessary and sufficient conditions for a semigroup with a completely simple minimal ideal to be pseudo-finite.  It was then observed in \cite[Open Question 8.10]{Dandan:2019} that every pseudo-finite semigroup hitherto considered possesses a completely simple minimal ideal, and the authors raised the question as to whether all pseudo-finite semigroups have this property.  (A positive answer would then yield a complete description of all pseudo-finite semigroups.)  This problem was later solved in the negative: in an article investigating the related condition that {\em every right congruence of finite index is finitely generated}, a counterexample was provided by way of a pseudo-finite simple (but not completely simple) semigroup \cite[Remark 7.3]{Miller:2020}.
%That paper provides some further information on left pseudo-finite semigroups (which will be reviewed in Section~\ref{sec:pfintro}), but the emphasis is on semigroups with finitely generated full right congruence.  However, in the course of the discussion in \cite{Dandan:2019}, the question of the existence and properties of a minimal ideal in a right pseudo-finite semigroup emerges as salient.

The above progress still leaves open the possibility that every pseudo-finite semigroup possesses a {\em minimal ideal}.  The aim of the present paper is to systematically explore the existence and nature of a minimal ideal in a (right) pseudo-finite semigroup.
After some generalities concerning semigroups and their actions in Section \ref{sec:prelim}, the notion of pseudo-finiteness is introduced in Section \ref{sec:pfintro}.
The main theme of the paper, i.e.\ the relationship between pseudo-finiteness and minimal ideals, is properly started in Section \ref{sec:pf,ideals}.
The next four sections contain the main results of the article.
In Sections \ref{sec:csmi} and \ref{sec:mi} we exhibit a number of natural classes of semigroups within which pseudo-finiteness implies the existence of a minimal ideal, and, often, a completely simple minimal ideal.
It  turns out, however, that in general pseudo-finiteness need not imply the existence of a minimal ideal.
We present a specific transformation semigroup that is pseudo-finite but has no minimal ideal at the beginning of Section \ref{sec:nomids}.
To enable us to provide further such examples, in Section~\ref{sec:con} we introduce a general construction based on an ideal extension of a Rees matrix semigroup.  This construction is then deployed in the remainder of Section \ref{sec:nomids} to exhibit pseudo-finite monoids without a minimal ideal that possess some additional desirable properties.  The article concludes with some open questions and directions for future research in Section~\ref{sec:conclusion}.

%The above progress still leaves open the possibility that every pseudo-finite semigroup possesses a {\em minimal ideal}.  The purpose of the present article is to show that this is not the case, and, moreover, to explore conditions under which a pseudo-finite semigroup {\em does} contain a minimal ideal and to determine the nature of that minimal ideal.\par
%The paper is organised as follows.  In the remainder of this section we provide the necessary preliminary material.  In Section \ref{sec:con} we introduce a construction that will be used repeatedly throughout the paper to procure examples of pseudo-finite monoids with certain prescribed properties.  Section \ref{sec:mi} provides necessary and sufficient conditions for a pseudo-finite monoid to have a minimal left ideal/minimal ideal.  Finally, in Section \ref{sec:csmi}, we consider the existence of a completely simple minimal ideal in a pseudo-finite monoid.  We build upon the work of \cite[Section 8]{Dandan:2019} by providing some necessary and sufficient conditions for a monoid with a completely simple minimal ideal to be pseudo-finite, and we then consider various semigroup properties that, together with pseudo-finiteness, imply the existence of a completely simple minimal ideal.
%

\section{Preliminaries: Semigroups, Ideals and Actions}
\label{sec:prelim}

In this section we establish some basic definitions and facts about semigroups and actions.  We refer the reader to \cite{Howie:1995} for a more comprehensive introduction to semigroup theory, and to \cite{kkm:2000} for further details on actions.

Unless stated otherwise, $S$ will always denote a semigroup  
and $S^1$ the monoid obtained from $S$ by adjoining an identity (if $S$ is already a monoid, then $S^1$ has a \emph{new} identity).  
We denote the set of idempotents of $S$ by $E(S).$  If $S=E(S),$ it is called a {\em band}.  A {\em semilattice} is a commutative band. 

An element $a\in S$ is said to be {\em regular} if there exists $b\in S$ such that $a=aba.$  The semigroup $S$ is said to be {\em regular} if every element of $S$ is regular.  It turns that for every regular element $a\in S$ there exists $b\in S$ such that $a=aba$ and $b=bab$; in this case, the element $b$ is said to be an {\em inverse} of $a,$ and vice versa.  If $S$ is regular and each of its elements has a unique inverse, then $S$ is called {\em inverse}.  If $S$ is inverse, then its set of idempotents $E(S)$ forms a semilattice.

A non-empty subset $I\subseteq S$ is said to be a {\em right ideal} of $S$ if $IS\subseteq I.$  Left ideals are defined dually, and an {\em ideal} of $S$ is a subset that it is both a right ideal and a left ideal.
A right ideal $I$ of $S$ is said to be {\em generated by} $X\subseteq I$ if $I=XS^1.$  A right ideal $I$ is said to {\em finitely generated} if it can be generated by a finite set, and $I$ is said to be {\em principal} if it can be generated by a one-element set.

A {\em right congruence} on $S$ is an equivalence relation $\rho$ on $S$ such that $(a, b)\in\rho$ implies $(ac, bc)\in\rho$ for all $a, b, c\in S$; {\em left congruences} are defined analogously. The importance of one-sided congruences for monoids is that they determine monogenic (single-generated) actions; one-sided ideals are not sufficient for this.

Green's relations $\el$, $\ar$, $\eh$, $\dee$ and $\jay$ are standard tools for describing the ideal structure of a semigroup.
Green's preorder $\leqel$ on $S$ is given by
$$a\,\leqel\,b\Leftrightarrow S^1a\subseteq S^1b,$$
and this leads to the $\el$-relation:
$a\,\el\,b$ if and only if $a\,\leqel\,b\text{ and }b\,\leqel\,a.$
In other words, $a, b\in S$ are $\el$-{\em related} if and only if they generate the same principal left ideal.
The preorders $\leq_{\ar}$ and $\leq_{\jay}$ are defined analogously and yield the equivalences $\ar$ and $\jay,$ respectively.
Next we have $\eh=\ar\cap\el,$ and finally $\dee=\ar\circ\el(=\el\circ \ar=\el\vee\ar)$.
It is clear from the definitions that Green's relations are equivalences on $S.$ 
Moreover, the preorder $\leqel$ is right compatible and hence $\el$ is a right congruence, and similarly $\ar$ is a left congruence on $S.$ 
It is easy to see that the following inclusions between Green's relations hold: 
$$\mathcal{H}\subseteq\mathcal{L},\, \mathcal{H}\subseteq\mathcal{R},\, \mathcal{L}\subseteq\mathcal{D},\, \mathcal{R}\subseteq\mathcal{D},\, \mathcal{D}\subseteq\mathcal{J}.$$
Following the standard convention, we will denote the $\mathcal{L}$/$\mathcal{R}$/$\mathcal{H}$/$\mathcal{D}$/$\mathcal{J}$-class of $a\in S$ by $L_a$/$R_a$/$H_a$/$D_a$/$J_a.$ 

It can be easily shown that every right/left/two-sided ideal is a union of $\ar/\el/\jay$-classes.
A semigroup with no proper right/left ideals is called {\em right/left simple}.  A semigroup is called {\em simple} if it has no proper ideals.  Clearly if $S$ is right or left simple, then it is simple.

A right/left/two-sided ideal $I$ of $S$ is said to be {\em minimal} if there is no right/left/two-sided ideal of $S$ properly contained in $I$.   
It turns out that, considered as semigroups, minimal right/left ideals are right/left simple \cite[Theorem 2.4]{Clifford:1948}, and minimal ideals are simple \cite[Theorem 1.1]{Clifford:1948}.  The semigroup $S$ contains at most one minimal ideal, but $S$ may possess multiple minimal right/left ideals.  If $S$ has a minimal right/left ideal, then the minimal ideal exists and is equal to the union of all the minimal right/left ideals \cite[Theorem 2.1]{Clifford:1948}.

A {\em completely simple} semigroup is a simple semigroup that possesses both minimal right ideals and minimal left ideals.  A semigroup has both minimal right ideals and minimal left ideals if and only if it has a completely simple minimal ideal \cite[Theorem 3.2]{Clifford:1948}.
In particular, every finite semigroup has a completely simple minimal ideal.

Suppose that $S$ is a semigroup with a completely simple minimal ideal $K.$
Then the minimal right ideals of $K$ are also the minimal right ideals of $S$; let us denote them by $R_i$ ($i\in I$).
Similarly, let $L_j$ ($j\in J$) be the minimal left ideals of $K$ (and hence of $S$).
The intersections $H_{ij}=R_i\cap L_j$ are $\eh$-classes of $S$ and are isomorphic groups.
For $h\in H_{ij}$ and $s\in S,$ we have $hs\in R_i$ and $sh\in L_j,$ due to the minimality of $R_i$ and $L_j$.
In other words, $S$ acts on the right on each $R_i$ and on the left on each $L_j$.
In fact, $H_{ij}s=H_{il}$ for some $l\in J$; i.e.\ $S$ acts on the set of $\eh$-classes inside an $\ar$-class. 
When $S=K,$ the above facts may be easily seen from the classic structure theorem outlined below.

Let $T$ be a semigroup, let $I$ and $J$ be two index sets, and let $P=(p_{j,i})$ be a $J\times I$ matrix with entries from $T.$
The \emph{Rees matrix semigroup} $S=\mathcal{M}[T;I,J;P]$ is the set $I\times T\times J$ with multiplication
\[(i,u,j)(k,v,m)=(i,up_{j,k}v,m).\]
The Rees-Suschkewitsch Theorem \cite[Theorem 3.3.1]{Howie:1995} states that a semigroup $S$ is completely simple if and only if it is isomorphic to some $\mathcal{M}[G;I,J;P]$ where $G$ is a group.
Furthermore, in this situation $P$ can be chosen to be in \emph{normal form}, i.e.\ to satisfy
$p_{1,i}=p_{j,1}=1_G$ for all $i\in I$ and $j\in J$; here $1\in I\cap J$ should be viewed as an arbitrary fixed element of both $I$ and $J.$

{\em Semigroup actions} are representations of semigroups by transformations of sets.  More precisely, a \emph{right action} of a semigroup $S$ on a set $A$
is a map $A\times S\rightarrow A$, $(a, s)\mapsto as,$
such that $(as)t=a(st)$ for all $a\in A$ and $s, t\in S$.
If $S$ is a monoid and $a1=a$ for all $a\in A,$ then we have a \emph{monoid action}.  In either case we say that $A$ is a {\em right $S$-act}.  If $A$ is a right $S$-act, there is a natural associated monoid action of $S^1$ on $A$; we will make use of this association without further comment throughout.
  
Let $A$ be a right $S$-act.  A subset $B$ of $A$ is called a {\em subact} of $A$ if $bs\in B$ for all $b\in B$ and $s\in S$; that is, $B=BS^1.$
A subset $U$ of a $A$ is a {\em generating set} for $A$ if $A=US^1;$ $A$ is said to be {\em finitely generated} if it has a finite generating set and {\em monogenic} if it has a one-element generating set.
An equivalence relation $\rho$ on $A$ is a {\em congruence} if $(a, b)\in\rho$ implies $(as, bs)\in\rho$ for all $s\in S$. 
For $X\subseteq A\times A,$ the {\em congruence generated by} $X$ is the smallest congruence on $A$ containing $X$; we denote this congruence by $\langle X\rangle.$ 

\begin{Lem}\label{lem:sequence}\cite[Lemma I.\,4.\,37]{kkm:2000}
Let $S$ be a semigroup, let $A$ be a right $S$-act, and let $X$ be a subset of $A\times A.$  For any $a, b\in A,$ we have $(a, b)\in\l X\r$ if and only if either $a=b$ or there exists a sequence $$a=x_1s_1,\ y_1s_1=x_2s_2,\ \dots, \ y_ns_n=b$$ where $(x_i, y_i)\in X$ or $(y_i, x_i)\in X,$ and $s_i\in S^1,$ for all $i\in\{1, \dots, n\}.$
\end{Lem}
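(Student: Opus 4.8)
The plan is to define a relation $\rho$ on $A$ consisting of exactly those pairs $(a,b)$ for which the right-hand condition holds—that is, $a=b$, or there is a connecting sequence of the stated form—and to prove that $\rho$ is precisely the smallest congruence on $A$ containing $X$. Since $\langle X\rangle$ is by definition the least such congruence, this yields $\rho=\langle X\rangle$ and hence the claimed equivalence. The verification splits into two inclusions: first that $\rho$ is itself a congruence containing $X$, giving $\langle X\rangle\subseteq\rho$; and second that any congruence containing $X$ must contain $\rho$, giving the reverse inclusion.

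For the first inclusion I would check the defining properties of $\rho$ in turn. Containment of $X$ is immediate, since for $(x,y)\in X$ the one-step sequence with $n=1$ and $s_1=1\in S^1$ witnesses $(x,y)\in\rho$. Reflexivity is built into the definition; symmetry follows by reversing a connecting sequence, which is legitimate precisely because the hypothesis ``$(x_i,y_i)\in X$ or $(y_i,x_i)\in X$'' is symmetric; and transitivity follows by concatenating two sequences, handling the degenerate cases $a=b$ or $b=c$ separately. Right compatibility is the key step: given a connecting sequence $a=x_1s_1,\ y_1s_1=x_2s_2,\ \dots,\ y_ns_n=b$ and any $t\in S$, I would apply the action axiom $(x_is_i)t=x_i(s_it)$ throughout to produce $at=x_1(s_1t),\ y_1(s_1t)=x_2(s_2t),\ \dots,\ y_n(s_nt)=bt$, observing that each $s_it$ again lies in $S^1$ (indeed in $S$). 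This is a connecting sequence witnessing $(at,bt)\in\rho$, so $\rho$ is a congruence.

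For the reverse inclusion, let $\sigma$ be an arbitrary congruence on $A$ with $X\subseteq\sigma$, and show $\rho\subseteq\sigma$; applying the result with $\sigma=\langle X\rangle$ then finishes the proof. A congruence for the $S$-action is automatically a congruence for the associated $S^1$-action, since multiplying by the identity changes nothing, so $(c,d)\in\sigma$ gives $(cs,ds)\in\sigma$ for every $s\in S^1$. Given $(a,b)\in\rho$, the case $a=b$ is covered by reflexivity of $\sigma$; otherwise, fixing a connecting sequence, the fact that $\sigma$ is symmetric and contains $X$ gives $(x_i,y_i)\in\sigma$ in either orientation, and $S^1$-compatibility yields $(x_is_i,y_is_i)\in\sigma$ for each $i$. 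The boundary equalities $a=x_1s_1$ and $y_ns_n=b$ together with $y_is_i=x_{i+1}s_{i+1}$ then chain these pairs, and transitivity of $\sigma$ delivers $(a,b)\in\sigma$. I do not anticipate a genuine obstacle here: the argument is a careful but routine verification, and the only points demanding attention are the bookkeeping of the $S^1$-action—ensuring the boundary case $s_i=1$ is handled via the natural monoid action rather than overlooked—and the consistent treatment of the degenerate $a=b$ cases throughout the symmetry, transitivity and compatibility checks.
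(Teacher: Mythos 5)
Your proposal is correct: the verification that the relation defined by the existence of such a sequence is itself a congruence containing $X$, together with the observation that any congruence containing $X$ must contain every pair linked by such a sequence, is the standard argument, and all the delicate points (symmetry of the condition ``$(x_i,y_i)\in X$ or $(y_i,x_i)\in X$'', concatenation at the boundary equalities, and compatibility with the induced $S^1$-action) are handled properly. The paper itself gives no proof but simply cites \cite[Lemma I.\,4.\,37]{kkm:2000}, and your argument is exactly the one that reference supplies.
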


A sequence of the form given in Lemma~\ref{lem:sequence} is referred to as an $X$-\emph{sequence} of \emph{length} $n$ from $a$ to $b$;
if $a=b$ we say that there is an {\em $X$-sequence of length $0$
from $a$ to $b$}.

Every semigroup $S$ is a right $S$-act via right multiplication.
The subacts of this right $S$-act are precisely the right ideals of $S,$ and its congruences are the right congruences on $S.$  
Left/right dualising the preceding discussion, we arrive at the notion of \emph{left} semigroup acts and their basic properties.

\section{Pseudo-finiteness: Introduction}
\label{sec:pfintro}

Previously the notion of pseudo-finiteness has only been considered for semigroups.  However, we will see it is both natural and useful to define this notion for $S$-acts as well.
 
\subsection{Fundamental definitions}
\label{subsec:defn}

Let $S$ be a semigroup and let $A$ be a right $S$-act.  Consider a set $X\subseteq A\times A$ such that $\omega_A=\l X\r$, where $\omega_A$ denotes the universal relation on $A.$
For any $a, b\in A$ let $d_X(a, b)$ denote the smallest $n\in\mathbb{N}^0$ such that there is an $X$-sequence of length $n$ from $a$ to $b.$ 
It is easy to see that $d_X : A\times A\rightarrow \N^0$ is a metric.

%The {\em diameter} of a metric space $(M, d)$ is defined to be $\sup\{d(x, y) : x, y\in M\}.$
\begin{Def}
Let $S$ be a semigroup and let $A$ be a right $S$-act.
\begin{itemize}[leftmargin=*]
\item If $\omega_A=\l X\r,$ we call the diameter of the metric space $(A, d_X)$ the {\em $X$-diameter} of $A$ and denote it by $D(X, A)$; in other words, \[D(X, A)=\sup\{d_X(a, b)\::\: a, b\in A\}.\]
\item If $\omega_A$ is finitely generated, we define the {\em diameter} of $A$ to be
$$D(A)=\min\{D(X, A) : \omega_A=\langle X\rangle, \ |X|<\infty\}.$$
\item We say that $A$ is {\em pseudo-finite} if it has finite diameter.
\end{itemize}
\end{Def}

Note that if $X$ and $Y$ are two finite generating sets for $\omega_A,$ then $D(X, A)$ is finite if and only if $D(Y, A)$ is finite; the proof of this fact is essentially the same as that of \cite[Lemma 2.5]{Dandan:2019}. 

\begin{Def} 
Let $S$ be a semigroup.
\begin{itemize}[leftmargin=*]
\item If $\omega_S=\l X\r,$ the {\em right $X$-diameter} of $S,$ denoted by $D_r(X, S),$ is the $X$-diameter of $S$ considered as a right $S$-act.
\item If $\omega_S$ is finitely generated, the {\em right diameter} of $S,$ denoted by $D_r(S),$ is the diameter of $S$ considered as a right $S$-act.
\item We say that $S$ is {\em right pseudo-finite} if it has finite right diameter (or, equivalently, $S$ is pseudo-finite as a right $S$-act).
\end{itemize}
\end{Def}

We dually define the notions of {\em left $X$-diameter} of $S$ and {\em left diameter} of $S,$ denoted by $D_l(X, S)$ and $D_l(S),$ respectively, and the notion of $S$ being {\em left pseudo-finite}.  
In a subsequent paper we will explore the notion of diameter in more detail, but it is convenient to have this terminology to draw upon here. 

\subsection{Finiteness conditions}
\label{subsec:fc}

It is clear that being right pseudo-finite is a semigroup {\em finiteness condition}, in the sense that every finite semigroup is right pseudo-finite.
In fact, for some classes of semigroups right pseudo-finiteness is \emph{equivalent} to being finite.
Most notably, this is the case for groups, as noted in \cite{Dandan:2019},  referring back to \cite{White:2017}.
In fact, a more general result is stated at the end of Section 4 of \cite{White:2017}, which we prove here for completeness.

A semigroup $S$ is said to be {\em weakly left cancellative} if for any $a, b\in S$ the set $\{s\in S^1 : a=bs\}$ is finite.  {\em Weakly right cancellative semigroups} are defined dually.  The class of weakly left cancellative semigroups includes all left cancellative semigroups (a semigroup $S$ is {\em left cancellative} if $ab=ac$ implies $b=c$ for all $a, b, c\in S$) and hence all groups.  

\begin{Prop}\label{prop:wlc}\cite[Section 4]{White:2017}
A weakly left cancellative semigroup $S$ is right pseudo-finite if and only if it is finite.
\end{Prop}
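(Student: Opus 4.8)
The plan is to prove the non-trivial implication: a weakly left cancellative, right pseudo-finite semigroup $S$ is finite. (The converse is immediate, since every finite semigroup is right pseudo-finite.) First I would fix a finite set $X\subseteq S\times S$ with $\omega_S=\l X\r$ and $D=D_r(X, S)<\infty$ (such an $X$ exists because $S$ is right pseudo-finite), and let $P$ denote the finite set of all elements of $S$ occurring as a coordinate of some pair in $X$. Fixing an arbitrary $a\in S$, the definition of diameter guarantees $d_X(a, b)\leq D$ for every $b\in S$, so each element of $S$ is the endpoint of some $X$-sequence of length at most $D$ starting from $a$. It therefore suffices to show that only finitely many elements of $S$ arise as such endpoints.

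The key observation is that weak left cancellativity bounds how an $X$-sequence can be continued at each step. Recall from Lemma~\ref{lem:sequence} that such a sequence has the shape $a=x_1s_1,\ y_1s_1=x_2s_2,\ \dots,\ y_ns_n=b$, where each $(x_i, y_i)$ or $(y_i, x_i)$ lies in $X$ (so $x_i, y_i\in P$) and each $s_i\in S^1$. The plan is to argue that, once the pairs $(x_i, y_i)$ are fixed, only finitely many admissible tuples $(s_1, \dots, s_n)$ remain. The initial constraint $a=x_1s_1$ confines $s_1$ to the set $\{s\in S^1 : a=x_1s\}$, which is finite since $a\in S$, $x_1\in P\subseteq S$ and $S$ is weakly left cancellative. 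Proceeding by induction, once $s_1, \dots, s_i$ have been chosen the element $y_is_i\in S$ is determined, and the constraint $y_is_i=x_{i+1}s_{i+1}$ confines $s_{i+1}$ to the finite set $\{s\in S^1 : y_is_i=x_{i+1}s\}$, again finite by weak left cancellativity.

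Carrying out the resulting count, the number of $X$-sequences of a given length $n$ starting at $a$ is finite: there are at most $(2|X|)^n$ choices for the sequence of oriented pairs, and for each such choice only finitely many compatible tuples $(s_1, \dots, s_n)$ by the previous step. Each such sequence has a single endpoint $y_ns_n$, and the empty sequence has endpoint $a$, so the set of endpoints of $X$-sequences of length at most $D$ from $a$ is finite. As observed, this set is all of $S$, whence $S$ is finite.

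I expect no serious obstacle here, since the argument is essentially a branching bound controlled by weak left cancellativity. The only point needing care is the bookkeeping with $S^1$: one must verify that at each step the "target" element ($a$, and later $y_is_i$) and the "multiplier" element ($x_i\in P$) both lie in $S$, so that the defining finiteness property $\{s\in S^1 : u=vs\}<\infty$ genuinely applies. This is straightforward because $y_i\in S$ and $s_i\in S^1$ force $y_is_i\in S$, and $x_i\in P\subseteq S$.
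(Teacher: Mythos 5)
Your proposal is correct and follows essentially the same approach as the paper's proof: both use weak left cancellativity to show, inductively along an $X$-sequence, that only finitely many multipliers $s_i$ can occur at each step, and then conclude that the finitely many sequences of bounded length starting from a fixed element can only reach finitely many endpoints. The paper merely packages the admissible multipliers at step $i$ into sets $U_i$ rather than counting sequences individually, but the underlying argument is identical.
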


\begin{proof}
We have already remarked that being right pseudo-finite is a finiteness condition, so we only need to prove the direct implication.  Let $\emptyset\neq X\subseteq S$ be a finite generating set for $\omega_S$, and let $D_r(X, S)=n.$  For $c, d\in S,$ denote the set $\{s\in S^1 : c=ds\}$ by $[c : d].$  Each $[c : d]$ is finite by assumption.  Fix $b\in S.$  We define sets $U_i\subseteq S^1$ recursively as follows: 
$$U_1=\bigcup_{x\in X}[b : x],\; U_i=\bigcup_{x, y\in X}\bigcup_{u\in U_{i-1}}[yu : x]\;\, (i\geq 2).$$
Since $X$ is finite, by an easy induction argument we have that each $U_i$ is finite.  Let $U=\bigcup_{i=1}^nU_i,$ and let $V$ denote the finite set $XU.$  We claim that $S=V.$  Indeed, let $a\in S.$  Then there exists an $X$-sequence
$$b=x_1s_1, \ y_1s_1=x_2s_2,\ \dots,\ y_ks_k=a$$
where $k\leq n.$
We have that $s_1\in[b : x_1]\subseteq U_1,$ and hence $s_2\in[y_1s_1 : x_2]\subseteq U_2.$  Continuing in this way, we deduce that $s_k\in U_k\subseteq U,$ and hence $a=y_ks_k\in V,$ as required.
\end{proof}

The next result, following similar lines, is framed in terms of the so-called Green's \linebreak
$\ast$-equivalences.
The equivalence relation $\el^{\ast}$ on a semigroup $S,$ introduced in \cite{McAlister:1976}, is defined by the rule that $(a, b)\in\el^{\ast}$ if and only $a, b$ are $\el$-related in some oversemigroup $T.$  We say that $S$ is {\em $\el^{\ast}$-simple} if it has a single $\el^{\ast}$-class; {\em $\ar^{\ast}$-simple} semigroups are defined analogously.

\begin{Prop}\label{prop:L*-simple}
An $\el^{\ast}$-simple semigroup $S$ is right pseudo-finite if and only if it is finite.
\end{Prop}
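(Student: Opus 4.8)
Since being right pseudo-finite is a finiteness condition, only the forward implication requires proof; so suppose $S$ is $\el^{\ast}$-simple and right pseudo-finite, and assume $S$ is non-trivial (the trivial case being immediate). The plan is to follow the proof of Proposition~\ref{prop:wlc} closely, replacing the use of weak left cancellativity by the structural consequences of $\el^{\ast}$-simplicity. First I would invoke the standard element-wise description of $\el^{\ast}$ (see \cite{McAlister:1976}): for $a, b\in S$ we have $a\,\el^{\ast}\,b$ if and only if, for all $s, t\in S^1$, $as=at\Leftrightarrow bs=bt$. Since $S$ is $\el^{\ast}$-simple, the relation $\sigma=\{(s, t)\in S^1\times S^1 : as=at\}$ on $S^1$ is then independent of the choice of $a\in S$, and is a right congruence on $S^1$. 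The two facts I would extract are: (i) for every $c, d\in S$ the set $[c:d]=\{s\in S^1 : c=ds\}$ is either empty or a single $\sigma$-class, being a fibre of the map $s\mapsto ds$; and, crucially, (ii) for every $c\in S$ the map $s\mapsto cs$ is constant on each $\sigma$-class, so that $c$ times a single $\sigma$-class is a single element of $S$. Identifying (ii) as the correct replacement for the finiteness of $[c:d]$ is the conceptual heart of the argument.

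Next I would fix a finite generating set $X$ for $\omega_S$ with $D_r(X, S)=n$, fix $b\in S$, and build the sets $U_1, \dots, U_n\subseteq S^1$ exactly as in the proof of Proposition~\ref{prop:wlc}, namely $U_1=\bigcup_{x}[b:x]$ and $U_i=\bigcup_{x, y}\bigcup_{u\in U_{i-1}}[yu:x]$ for $i\ge 2$, where $x, y$ range over the (finitely many) coordinates of the pairs in $X$. Whereas in Proposition~\ref{prop:wlc} finiteness of the $U_i$ followed from finiteness of the sets $[c:d]$, here I would instead argue by induction, using (i), that each $U_i$ is a \emph{finite union of $\sigma$-classes}. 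This is where (ii) also enters: $[yu:x]$ depends only on the $\sigma$-class of $u$, since $yu=yu'$ whenever $u, u'$ lie in a common class, so passing from $U_{i-1}$ to $U_i$ adjoins only finitely many new classes. Setting $U=\bigcup_{i=1}^{n}U_i$, the same computation as in Proposition~\ref{prop:wlc} shows that every $a\in S$ can be written as $a=yu$ with $y$ a coordinate of some pair of $X$ and $u\in U$.

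It then remains to deduce finiteness of $S$ from $S=\{yu : y\text{ a coordinate of }X,\ u\in U\}$. This is precisely where $\el^{\ast}$-simplicity does the work that weak cancellativity did before: by (ii), for each fixed coordinate $y$ the image $\{yu : u\in U\}$ contains at most one element per $\sigma$-class of $U$, hence is finite because $U$ is a finite union of $\sigma$-classes; as $X$ has only finitely many coordinates, $S$ is a finite union of finite sets and so is finite. The hard part will be the bookkeeping underlying (i)--(ii): one must verify that, although the fibres $[c:d]$ may themselves be infinite (so that the argument of Proposition~\ref{prop:wlc} genuinely breaks down), they collapse to points under a further left multiplication, and that this collapsing meshes correctly with the recursion defining the $U_i$. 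Once that is in place, the counting is immediate.
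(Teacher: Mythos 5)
Your proof is correct, but it follows a genuinely different route from the paper's. The paper uses Pastijn's representation theorem to place all of $S$ inside a single $\el$-class of one oversemigroup $T$, chooses for each pair $x, y\in X$ an element $\alpha(x,y)\in T$ with $x=\alpha(x,y)y$, and then reads off from an $X$-sequence that every $a\in S$ equals $\alpha_1\cdots\alpha_k b$ with $k\leq n$, so that $S$ is contained in the finite set of such products. You instead stay inside $S^1$: $\el^{\ast}$-simplicity gives a single right congruence $\sigma=\{(s,t)\in S^1\times S^1 : as=at\}$ independent of the choice of $a\in S$, and you rerun the proof of Proposition~\ref{prop:wlc} with ``each fibre $[c:d]$ is finite'' replaced by ``each fibre is empty or a single $\sigma$-class'', the induction on the $U_i$ working because $[yu:x]$ depends only on the $\sigma$-class of $u$, and the final count supplied by the fact that left multiplication by any element of $S$ collapses each $\sigma$-class to a point. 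All of this is sound: your facts (i) and (ii) are immediate once one knows that $\el^{\ast}$-related elements have equal right-multiplication kernels, which is the \emph{easy} direction of the elementwise characterisation of $\el^{\ast}$. Consequently your argument avoids Pastijn's embedding altogether, and it makes explicit the structural parallel with Proposition~\ref{prop:wlc} that the paper's remark following this proposition only gestures at; the price is the extra bookkeeping with $\sigma$-classes where the paper has a one-line telescoping computation. (The degenerate case $a=b$, where the $X$-sequence has length $0$, contributes one extra element and is harmless --- the same small point arises in the paper's proof of Proposition~\ref{prop:wlc}.)
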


\begin{proof}
Again, we just prove the direct implication.  Since $S$ is $\el^{\ast}$-simple, by \cite[Theorem 1]{Pastijn:1975} 
there exists an oversemigroup $T$ such that $S$ is contained in a single $\el$-class of $T.$
(One can take $T$ to be the dual of the full transformation monoid on $S^1,$ in which maps are composed from right to left.)

Now let $n=D_r(S),$ and let $X\subseteq S$ be a finite generating set for $\omega_S$ such that $D_r(X, S)=n.$  For each pair $x, y\in X,$ since $x$ and $y$ are $\el$-related in $T$ we can choose $\alpha(x,y)\in T$ such that $x=\alpha(x,y)y.$  Fix $b\in S.$  The set
$$U=\{\alpha(x_1,y_1)\dots\alpha(x_k,y_k)b : x_i, y_i\in X, k\leq n\}\subseteq T$$
 is finite since it consists of products of a finite number of elements of length at most $n+1.$  We claim that $S\subseteq U.$  Indeed, for any $a\in S$ there exists an $X$-sequence
$$a=x_1s_1,\ y_1s_1=x_2s_2,\ \dots,\ y_ks_k=b$$
where $k\leq n.$
Letting $\alpha_i=\alpha(x_i, y_i),$ we have that
$$a=x_1s_1=\alpha_1y_1s_1=\alpha_1x_2s_2=\alpha_1\alpha_2y_2s_2=\cdots
=\alpha_1\dots\alpha_ky_ks_k=\alpha_1\dots\alpha_kb\in U,$$
as required.
\end{proof}

\begin{Rem}
There is an intriguing connection between Propositions \ref{prop:wlc} and \ref{prop:L*-simple}.
On the one hand, there are considerable similarities in the structure of the proofs, even though they deal with fairly different sets of assumptions. On the other hand, if the single $\el$-class of $T$ in the proof of Proposition \ref{prop:L*-simple} happens to be the $\el$-class of the identity, then this implies that $S$ is left cancellative, thus recovering a special case of 
Proposition \ref{prop:wlc}.
\end{Rem}

\subsection{Diagonal acts}
\label{subsec:diagonal}

Given a semigroup $S,$ one can define a right action of $S$ on the set $S\times S$ by $(a, b)c=(ac, bc)$ for all $a, b, c\in S.$  With this action, $S\times S$ is called the {\em diagonal right $S$-act}.  The {\em diagonal left $S$-act} is defined dually.  Diagonal acts first appear, implicitly, in the work of Bulman-Fleming and McDowell \cite{Bulman:1989}.  They were formally defined and studied by Robertson et al.\ in \cite{Robertson:2001}, and the same authors then made use of this notion in relation to wreath products \cite{Robertson:2002}.  
The importance of diagonal acts for the theory of right pseudo-finite semigroups is encapsulated in the following result.

\begin{Prop}\label{prop:diagonal,diameter}
For a non-trivial semigroup $S,$ the diagonal right $S$-act is finitely generated if and only $S$ has right diameter 1.  In particular, if the diagonal right $S$-act is finitely generated, then $S$ is right pseudo-finite.
\end{Prop}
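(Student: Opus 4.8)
The plan is to prove both implications by translating directly between the two defining conditions, using Lemma~\ref{lem:sequence} (applied to $A=S$) as the bridge. The essential observation is that, in the diagonal right $S$-act, the action $(x,y)s=(xs,ys)$ means that a set $U\subseteq S\times S$ captures a pair $(a,b)$ in $US^1$ precisely when $a=xs$ and $b=ys$ for some $(x,y)\in U$ and $s\in S^1$; on the other hand, a length-$1$ $X$-sequence from $a$ to $b$ is exactly a factorisation $a=xs$, $b=ys$ with $(x,y)\in X\cup X^{-1}$ and $s\in S^1$, where $X^{-1}=\{(y,x):(x,y)\in X\}$. Thus the two notions differ only in the treatment of the diagonal $\{(a,a):a\in S\}$ (reached by length-$0$ sequences, but which must still be produced inside the diagonal act) and in the symmetry $X\leftrightarrow X^{-1}$.

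For the forward implication I would take a finite generating set $U$ for the diagonal act, so that $S\times S=US^1$, and simply use $U$ itself as a candidate generating set $X$ for $\omega_S$. Every pair $(a,b)$ then equals $(x,y)s$ for some $(x,y)\in U$, which is a length-$1$ $X$-sequence $a=xs$, $ys=b$; hence $\langle X\rangle=\omega_S$ and $D_r(X,S)\leq 1$. Non-triviality of $S$ supplies a pair $a\neq b$ with $d_X(a,b)\geq 1$, forcing $D_r(X,S)=1$ and therefore $D_r(S)=1$.

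The reverse implication carries the one genuine subtlety, and handling the diagonal is the main point. Starting from a finite $X$ with $\omega_S=\langle X\rangle$ and $D_r(X,S)=1$, every off-diagonal pair $(a,b)$ (with $a\neq b$) admits a length-$1$ sequence and so lies in $(X\cup X^{-1})S^1$. The diagonal is not reached by such sequences, so the key step is to observe that right diameter $1$ already forces $S$ to be finitely generated as a right $S$-act. Indeed, let $Z$ be the finite set of all coordinates occurring in the pairs of $X$; for any $a\in S$, choosing some $b\neq a$ and invoking the length-$1$ sequence from $a$ to $b$ gives $a=zs$ with $z\in Z$ and $s\in S^1$, whence $S=ZS^1$. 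Consequently each diagonal element satisfies $(a,a)=(z,z)s$, and I would conclude by taking the finite set $U=(X\cup X^{-1})\cup\{(z,z):z\in Z\}$, for which $S\times S=US^1$. The final ``in particular'' assertion is then immediate, since right diameter $1$ is in particular finite, so $S$ is right pseudo-finite.
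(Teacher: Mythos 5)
Your proof is correct and follows essentially the same route as the paper: the forward direction reads a generating pair $(a,b)=(u,v)s$ as a length-$1$ sequence, and the converse symmetrises a finite generating set $X$ with $D_r(X,S)=1$ to generate the diagonal act. The one place the paper merely says ``it follows readily'' --- producing the diagonal pairs $(a,a)$ --- you handle explicitly by noting that diameter $1$ plus non-triviality forces $S=ZS^1$ for the finite set $Z$ of coordinates of $X$, and adjoining $\{(z,z):z\in Z\}$; this is exactly the right fix and closes the only subtle point.
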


\begin{proof}
Suppose first that the diagonal right $S$-act is generated by a finite set $U.$ This means that for any $a, b\in S$ there exist $(u, v)\in U$ and $s\in S^1$ such that $(a, b)=(u, v)s$.  But then $a=us, vs=b$ is a $U$-sequence of length $1$, and hence $D_r(S)=1.$
Conversely, if $D_r(S)=1,$ let $X$ be a generating set for $\omega_S$ such that $D_r(X,S)=1.$  Setting
$$U=\{(x, y)\::\: (x, y)\in X \text{ or } (y, x)\in X\},$$ it follows readily that $S\times S=US^1$.
\end{proof}

Gallagher \cite{Gallagher:2006, Gallagher:2005} systematically studied finitely generated diagonal acts, a class that turns out to be quite rich and rather curious.  
As a source of examples, we summarise his findings regarding certain monoids of transformations and binary relations:

\begin{Thm}\label{thm:diag}\cite[Table 1]{Gallagher:2005}
Let $X$ be an infinite set.
\begin{thmenumerate}
\item The monoid $\mathcal{B}_X,$ consisting of all binary relations on $X,$ has cyclic diagonal right act and cyclic diagonal left act.
\item The full transformation monoid $\mathcal{T}_X$ has cyclic diagonal right act and cyclic diagonal left act.
\item The partial transformation monoid $\mathcal{P}_X$ has cyclic diagonal right act and cyclic diagonal left act.
\item The monoid $\mathcal{F}_X,$ consisting of all transformations on $X$ whose kernel classes are finite, has cyclic diagonal right act, but its diagonal left act is not finitely generated.
\end{thmenumerate}
\end{Thm}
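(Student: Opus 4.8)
The plan is to treat the two positive ``cyclic'' assertions by two uniform constructions and then to isolate the single genuinely different case, namely the diagonal left act of $\mathcal{F}_X$. Throughout I compose maps from left to right, so that $x(fg)=(xf)g$; thus the diagonal right act carries the action $(a,b)\cdot c=(ac,bc)$ and the diagonal left act the action $c\cdot(a,b)=(ca,cb)$. Note that ``cyclic'' just means generated by a single pair, so in each positive case it suffices to exhibit one generating pair.

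\textbf{Right acts.} For every one of the four monoids I would produce a single generator for the diagonal right act by \emph{splitting the domain of the multiplier}. As $X$ is infinite, fix a partition $X=Y\sqcup Z$ together with bijections $\phi\colon X\to Y$ and $\psi\colon X\to Z$, and take the candidate generator to be $(\phi,\psi)$, where $\phi,\psi$ are regarded as (injective, total) elements of the monoid. Given any $(a,b)$, define a multiplier $s$ by $ys=(y\phi^{-1})a$ for $y\in Y$ and $zs=(z\psi^{-1})b$ for $z\in Z$; then $\phi s=a$ and $\psi s=b$, so $(a,b)=(\phi,\psi)\cdot s$. For $\mathcal{T}_X$ this is immediate; for $\mathcal{P}_X$ one reads the same formulas as partial maps, and for $\mathcal{B}_X$ one replaces them by the corresponding relational composites. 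For $\mathcal{F}_X$ one extra verification is required, and it is the whole point: $\phi,\psi$ are injective and hence finite-to-one, while each fibre of $s$ is the union of one fibre of $\phi^{-1}a$ and one of $\psi^{-1}b$, both finite, so $s\in\mathcal{F}_X$. Hence in all four cases the diagonal right act is cyclic.

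\textbf{Left acts of $\mathcal{B}_X$, $\mathcal{T}_X$, $\mathcal{P}_X$.} Here I would instead \emph{pair the two outputs}. Fix a bijection $\theta\colon X\to X\times X$ and let $u,v$ be $\theta$ followed by the first and second coordinate projections. For $\mathcal{T}_X$, given $(a,b)$ the multiplier $c$ defined by $xc=(xa,xb)\theta^{-1}$ satisfies $cu=a$ and $cv=b$, so $(u,v)$ generates. For $\mathcal{P}_X$ one augments each coordinate by a dummy symbol so that a value may be recorded as ``undefined'', and for $\mathcal{B}_X$ one passes to the relational version in which $c$ is allowed to be a genuine relation; both are routine adaptations of the $\mathcal{T}_X$ case. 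Crucially, the projections $u,v$ have infinite fibres, so this construction does \emph{not} take place inside $\mathcal{F}_X$ --- the first indication that the left act of $\mathcal{F}_X$ behaves differently.

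\textbf{The main obstacle.} The one step demanding a real argument is that the diagonal left act of $\mathcal{F}_X$ is not finitely generated, and the key is that finite kernel classes make the relevant ``columns'' finite. Suppose, for a contradiction, that $\{(u_i,v_i):1\le i\le n\}$ generated this act. Fix $x_0\in X$ and put $F=\bigcup_{i=1}^{n}\big(\{\,x'v_i : x'u_i=x_0\,\}\cup\{x_0v_i\}\big)$; since each $u_i$ is finite-to-one the fibre over $x_0$ is finite, so $F$ is finite. Pick $w\in X\setminus F$ and a bijection $b\in\mathcal{F}_X$ with $x_0b=w$, and consider the pair $(\mathrm{id}_X,b)$. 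If $(\mathrm{id}_X,b)=c\cdot(u_i,v_i)$ for some $c\in\mathcal{F}_X^1$, then $c\neq 1$ because $x_0b=w\neq x_0v_i$, and $cu_i=\mathrm{id}_X$ forces $(x_0c)u_i=x_0$, whence $x_0b=(x_0c)v_i\in F$, contradicting $w\notin F$. As the finite candidate set was arbitrary, no finite generating set exists. I expect getting this escape argument, and keeping the left/right conventions consistent throughout, to be the main obstacle; the remaining verifications (that the displayed maps multiply correctly and lie in the respective monoid) are routine.
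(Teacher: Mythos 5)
The paper offers no proof of this theorem: it is quoted directly from Gallagher and Ru\v{s}kuc \cite{Gallagher:2005}, so there is no in-paper argument to compare against. Your proof is correct and essentially reconstructs the standard arguments from that source: the split-domain generator $(\phi,\psi)$ for the diagonal right acts, the pairing bijection $\theta\colon X\to X\times X$ for the left acts of $\mathcal{T}_X$, $\mathcal{P}_X$, $\mathcal{B}_X$, and the fibre-finiteness escape argument for the left act of $\mathcal{F}_X$. The negative result, which is the only part requiring a genuine idea, is handled correctly, including the degenerate case $c=1$. The one place where ``routine adaptation'' undersells the work is the left act of $\mathcal{B}_X$: keeping the total projections $u=\theta p_1$, $v=\theta p_2$ and merely allowing $c$ to be a relation does not work, since for a target $(a,b)$ with $xa\neq\emptyset$ but $xb=\emptyset$ any witness $w$ with $x\,c\,w$ forces $x(cv)\neq\emptyset$, so $cv\neq b$. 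You need the same dummy-symbol device you describe for $\mathcal{P}_X$ --- code pairs over $(X\cup\{\ast\})\times(X\cup\{\ast\})$ minus $(\ast,\ast)$ and let $u,v$ be the resulting \emph{partial} projections --- or, equivalently, take $u,v$ to be partial injections with disjoint domains $X_1,X_2$ and set $c=a\alpha\cup b\beta$. With that two-line repair the proof is complete.
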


\subsection{Basic properties}
We begin this subsection by remarking that, given an $S$-act $A,$ any finite generating set for $\omega_A$ is contained in one of the form $X\times X$ for some finite set $X\subseteq A.$  We shall often abuse terminology by saying that {\em $\omega_A$ is generated by $X$}, by which we mean that $\omega_A$ is generated by $X\times X$.  Similarly, we shall speak of the $X$-diameter of $A,$ meaning the ($X\times X$)-diameter.

We now establish some basic results concerning pseudo-finiteness of acts and semgroups.  

\begin{Lem}\label{lem:pffg} 
Let $S$ be a semigroup.  Every pseudo-finite right $S$-act is finitely generated.  In particular, if $S$ is right pseudo-finite then it is finitely generated as a right ideal.
\end{Lem}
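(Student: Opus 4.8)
The plan is to unwind the definition of pseudo-finiteness for an act and show it forces a finite generating set, then apply this to the regular right action of $S$ on itself. Let $A$ be a pseudo-finite right $S$-act, so by definition there is a finite set $X\subseteq A$ with $\omega_A=\l X\times X\r$ and finite $X$-diameter $D(X,A)=n$ for some $n\in\N^0$. The claim is that $X$ itself (or a slight enlargement of it) already generates $A$ as an act, i.e.\ $A=XS^1$.

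The key step is to bound, uniformly, how far any element of $A$ sits from $X$ in terms of the act generated by $X$. Fix any basepoint $a_0\in X$ and take an arbitrary $b\in A$. Since $D(X,A)=n$, Lemma~\ref{lem:sequence} gives an $X$-sequence of length at most $n$ from $a_0$ to $b$, say $a_0=x_1s_1,\ y_1s_1=x_2s_2,\ \dots,\ y_ks_k=b$ with $k\le n$, each $(x_i,y_i)$ or $(y_i,x_i)$ in $X\times X$, and $s_i\in S^1$. The final equation reads $b=y_ks_k$, where $y_k\in X$ and $s_k\in S^1$; hence $b\in XS^1$. As $b$ was arbitrary, $A=XS^1$, so $X$ is a finite generating set for $A$ and $A$ is finitely generated.

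For the ``in particular'' clause, recall from the preliminaries that $S$ is a right $S$-act via right multiplication, and that the subacts of this act are precisely the right ideals of $S$. If $S$ is right pseudo-finite then, by definition, $S$ is pseudo-finite as a right $S$-act, so the first part applies and yields a finite $X\subseteq S$ with $S=XS^1$. This is exactly the statement that $S$ is finitely generated as a right ideal of itself.

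I expect no serious obstacle here; the only point requiring care is the trivial edge case. When $b=a_0$ Lemma~\ref{lem:sequence} allows a length-$0$ sequence, but then $b=a_0\in X\subseteq XS^1$ directly, so the conclusion still holds. A second minor subtlety is the passage from a general finite generating set of $\omega_A$ to one of the form $X\times X$; this is precisely the abuse of terminology already licensed in the paragraph preceding the lemma, so I may invoke it without further comment. The argument is thus essentially a direct reading of the last equation in an $X$-sequence.
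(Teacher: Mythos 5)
Your proof is correct and takes essentially the same approach as the paper's: both extract membership in $XS^1$ by reading off one end of an $X$-sequence (the paper reads the first equation $a=x_1s_1$ of a positive-length sequence from $a$ to some $b\neq a$, whereas you read the last equation of a sequence from a fixed basepoint in $X$). The only hair worth noting is that when $A$ is a one-element act the set $X$ may be empty, so your basepoint $a_0\in X$ need not exist; the paper dispatches this degenerate case separately before the main argument, and yours should too, though $A$ is then trivially generated by its single element.
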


\begin{proof} 
Let $A$ be a pseudo-finite right $S$-act.  If $A$ is trivial then it is certainly finitely generated, so suppose that $A$ has at least two elements.  There exists $X\subseteq A$ such that $A$ has finite $X$-diameter.  Let $a\in A$, and pick any $b\in A$, $b\neq a$.  Then there exists an $X$-sequence of positive length connecting $a$ to $b,$ so that $a=xs$ for some $x\in X$ and $s\in S^1.$  Thus $A$ is generated by $X.$ 
\end{proof}

\begin{Lem}\label{lem:subact} 
Let $S$ be a semigroup.  Let $A$ be a finitely generated right $S$-act and let $B$ be a subact of $A.$  If $B$ is pseudo-finite then so is $A.$
\end{Lem}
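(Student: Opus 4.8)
The plan is to build, out of the data we are handed, an explicit finite generating set for $\omega_A$ whose diameter is bounded, and then read off pseudo-finiteness directly. First I would dispose of the degenerate cases: if $A$ is trivial it is pseudo-finite outright, and if $A=B$ there is nothing to prove, so I assume $A$ has at least two elements. Pseudo-finiteness of $B$ forces $B\neq\emptyset$ (otherwise every finitely generated act, having $\emptyset$ as a subact, would be declared pseudo-finite), so I may fix some $b_0\in B$. Since $A$ is finitely generated, write $A=GS^1$ with $G=\{g_1,\dots,g_k\}$ finite. Since $B$ is pseudo-finite, choose a finite $Y\subseteq B\times B$ with $\langle Y\rangle=\omega_B$ and $Y$-diameter $D(Y,B)=m<\infty$; crucially, because $B$ is a subact we have $BS^1=B$, so every $Y$-sequence issuing from a point of $B$ remains inside $B$, and hence $m$ is a genuine bound on the distance between any two points of $B$.

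Next I would set
\[
X=Y\cup\{(g_i,b_0):1\le i\le k\},
\]
which is finite because $Y$ and $G$ are. The added pairs serve as ``bridges'' that pull any element of $A$ into $B$ in a single step: given $a\in A$, write $a=g_is$ with $s\in S^1$; then, applying Lemma~\ref{lem:sequence} to the pair $(g_i,b_0)\in X$ with scalar $s$, the one-term sequence $a=g_is,\ b_0s$ is an $X$-sequence of length $1$ from $a$ to the point $b_0s\in B$.

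With this device the diameter bound is a splicing argument. For arbitrary $a=g_is$ and $a'=g_js'$ in $A$ I would: (i) take one step from $a$ to $b_0s$; (ii) observe $b_0s,b_0s'\in B$ and insert a $Y$-sequence of length at most $m$ from $b_0s$ to $b_0s'$, which is a legitimate $X$-sequence in $A$ since $Y\subseteq X$; and (iii) take one step from $b_0s'$ back to $a'=g_js'$, this time using the reverse orientation of the pair $(g_j,b_0)\in X$, which is permitted by the ``$(x_i,y_i)\in X$ or $(y_i,x_i)\in X$'' clause of Lemma~\ref{lem:sequence}. Concatenating (i)--(iii) produces an $X$-sequence of length at most $m+2$ from $a$ to $a'$. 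This simultaneously shows $\langle X\rangle=\omega_A$ and that $D(X,A)\le m+2$, whence $A$ has finite diameter and is pseudo-finite.

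The only genuinely delicate points, and where I would be most careful, are the two marked uses of the hypotheses. The first is in step (ii): I must ensure the $Y$-sequence can be taken to stay within $B$ so that the intrinsic bound $m$ really applies, and this is exactly the place where subacthood $BS^1=B$ is essential. The second is bookkeeping on the orientation of the bridging pairs in steps (i) and (iii) against the definition of an $X$-sequence. Neither is hard, but both are where an incautious argument would slip; the remaining edge cases ($A$ trivial, $A=B$) each cost only a line.
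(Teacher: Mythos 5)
Your proof is correct and follows essentially the same route as the paper's: adjoin to a finite generating set of $\omega_B$ a finite set of "bridge" pairs linking the generators of $A$ to a point of $B$, then splice bridge--$Y$-sequence--bridge to get diameter at most $D(Y,B)+2$. The only cosmetic difference is that you bridge to a single fixed $b_0\in B$ while the paper bridges to elements of its generating set $X\subseteq B$; the bound and the argument are the same.
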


\begin{proof} 
We have that $A=US^1$ for some finite set $U\subseteq A.$  Suppose that $\omega_B=\langle X\rangle$ for some finite set $X\subseteq B.$  Since $B$ is pseudo-finite, the $X$-diameter $D(X, B)$ is finite.  For any $a\in A,$ there exist some $u\in U$ and $s\in S^1$ such that $a=us.$  Then $xs\in B$ for any $x\in X.$  It follows that $\omega_A=\l Y\r,$ where $Y=X\cup U,$ and that $D(Y, A)\leq D(X, B)+2.$  Thus $D(A)\leq D(Y, A)$ is finite, and hence $A$ is pseudo-finite.
\end{proof}

\begin{Lem}\label{lem:actquotient} 
Let $S$ be a semigroup.  Let $A$ be an $S$-act and let $B$ be a homomorphic image of $A.$  If $A$ is pseudo-finite then so is $B.$
\end{Lem}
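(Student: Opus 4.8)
The plan is to transport a finite generating set of small diameter from $A$ across the surjection. Let $\phi\colon A\to B$ be a surjective $S$-act homomorphism realising $B$ as a homomorphic image of $A$. Since $A$ is pseudo-finite, I would choose a finite set $X\subseteq A\times A$ with $\omega_A=\l X\r$ and $D(X,A)$ finite, and define its image $Y=\{(\phi(x),\phi(y)):(x,y)\in X\}$, which is again finite. The claim to establish is that $Y$ generates $\omega_B$ with $D(Y,B)\le D(X,A)$, whence $B$ is pseudo-finite.

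The key step is to check that $\phi$ carries $X$-sequences to $Y$-sequences of the same length. Given $b,b'\in B$, surjectivity supplies $a,a'\in A$ with $\phi(a)=b$ and $\phi(a')=b'$, and since $\omega_A=\l X\r$ there is, by Lemma~\ref{lem:sequence}, an $X$-sequence $a=x_1s_1,\ y_1s_1=x_2s_2,\ \dots,\ y_ns_n=a'$ of some length $n\le D(X,A)$. Applying $\phi$ and using that it commutes with the action, i.e.\ $\phi(cs)=\phi(c)s$ for all $c\in A$ and $s\in S^1$, the image $b=\phi(x_1)s_1,\ \phi(y_1)s_1=\phi(x_2)s_2,\ \dots,\ \phi(y_n)s_n=b'$ is a $Y$-sequence from $b$ to $b'$ of the same length $n$. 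The symmetry built into the definition of $Y$ guarantees that whenever $(x_i,y_i)\in X$ or $(y_i,x_i)\in X$, the corresponding pair $(\phi(x_i),\phi(y_i))$ or $(\phi(y_i),\phi(x_i))$ lies in $Y$, so the defining conditions of a $Y$-sequence are met.

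It then follows that $d_Y(b,b')\le d_X(a,a')\le D(X,A)$ for all $b,b'\in B$. In particular every pair of elements of $B$ is connected by a $Y$-sequence, so $\omega_B=\l Y\r$, and $D(Y,B)\le D(X,A)<\infty$. Hence $B$ has finite diameter and is pseudo-finite, as required.

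I do not anticipate a genuine obstacle here; the only point requiring care is the handling of the factors $s_i=1$, which is covered by the convention that an $S$-act homomorphism $\phi$ is automatically a homomorphism of the associated $S^1$-acts, so that $\phi(c\cdot 1)=\phi(c)=\phi(c)\cdot 1$. This makes the relation $\phi(cs)=\phi(c)s$ valid for all $s\in S^1$, which is exactly what is needed to push the entire sequence through $\phi$.
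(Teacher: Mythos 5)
Your proposal is correct and follows essentially the same route as the paper's proof: push a finite generating set $X$ with finite $X$-diameter forward along the surjection, observe that images of $X$-sequences are $Y$-sequences of the same length, and conclude $\omega_B=\l Y\r$ with $D(Y,B)\le D(X,A)$. The extra care you take over the $s_i=1$ case and the symmetry of $Y$ is fine but not a substantive difference.
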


\begin{proof}
Let $\omega_A=\langle X\rangle$ for some finite set $X\subseteq A.$  Since $A$ is pseudo-finite, the $X$-diameter $D(X, A)$ is finite.  Let $\theta : A\to B$ be a surjective homomorphism, and let $Y=X\theta.$  Applying $\theta$ to any $X$-sequence yields a $Y$-sequence of the same length.  It follows that $\omega_B=\langle Y\rangle$ and that $D(Y, B)\leq D(X, A).$  Thus $D(B)\leq D(Y, B)$ is finite, and hence $B$ is pseudo-finite.
\end{proof}

\begin{Lem}
\label{Lem:STacts}
Suppose that $S$ is a subsemigroup of $T$, and let $A$ be a $T$-act.
If $A$ is pseudo-finite as an $S$-act, then it is also pseudo-finite as a $T$-act.
\end{Lem}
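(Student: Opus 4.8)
The plan is to show that a single finite generating set witnessing pseudo-finiteness over $S$ also witnesses it over $T$, with no increase in diameter, so that essentially nothing needs to be re-proved. Note first that restricting the $T$-action to $S\subseteq T$ is precisely what turns $A$ into the $S$-act appearing in the hypothesis; in particular, for every $s\in S$ and $a\in A$ the element $a\cdot s$ is the same whether computed in the $S$-act or in the $T$-act. So I would fix a finite set $X\subseteq A$ for which $\omega_A=\langle X\rangle$ holds over $S$ and the $X$-diameter of $A$ (computed over $S$) is finite; such an $X$ exists because $A$ is pseudo-finite as an $S$-act. The claim is that the very same $X$ works over $T$.

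The key observation is that every $X$-sequence in the sense of Lemma~\ref{lem:sequence}, taken with scalars in $S^1$, is simultaneously an $X$-sequence with scalars in $T^1$. Indeed, in such a sequence each scalar $s_i$ lies in $S^1$; if $s_i\in S$ then $s_i\in T\subseteq T^1$, and the products occurring in the sequence are unchanged since the $S$-action is the restriction of the $T$-action, whereas if $s_i$ is the adjoined identity of $S^1$ we simply replace it by the identity of $T^1$, both acting as the identity map on $A$. Consequently, for any $a,b\in A$, an $X$-sequence from $a$ to $b$ over $S$ is an $X$-sequence of the same length over $T$. This gives two conclusions at once. First, since $\omega_A=\langle X\rangle$ over $S$ means every pair of elements of $A$ is joined by such a sequence, the same pairs are joined over $T$, so $\omega_A=\langle X\rangle$ holds over $T$ as well. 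Second, the distance $d_X(a,b)$ computed over $T$ is at most the distance computed over $S$ for every $a,b\in A$, whence the $X$-diameter of $A$ over $T$ is bounded above by the $X$-diameter over $S$, which is finite. Therefore $A$ has finite diameter as a $T$-act and is pseudo-finite, as required.

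The only point requiring genuine care — and the nearest thing to an obstacle — is the bookkeeping between $S^1$ and $T^1$: one must observe that adjoining an identity to $S$ does not disrupt the transcription, because whatever the adjoined identity of $S^1$ may be, it acts on $A$ exactly as the identity of $T^1$ does, namely trivially, so it can be swapped for the latter without changing any term of the sequence. Beyond this, the argument is a direct reinterpretation of each $S$-sequence as a $T$-sequence, and no estimate other than the monotonicity of the diameter is needed.
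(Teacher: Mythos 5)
Your proposal is correct and follows exactly the paper's argument: the paper's entire proof is the observation that every $X$-sequence over $S$ is also an $X$-sequence over $T$, which is precisely your key step. Your additional care about the adjoined identities of $S^1$ versus $T^1$ is a sound (if routine) piece of bookkeeping that the paper leaves implicit.
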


\begin{proof}
For any $X\subseteq A\times A$, every $X$-sequence where $A$ is regarded as an $S$-act is also an $X$-sequence with $A$ regarded as a $T$-act. 
\end{proof}

Turning to right pseudo-finiteness of semigroups, a similar argument to that of Lemma \ref{lem:actquotient} proves:

\begin{Lem}\label{lem:quotient}\cite[Proposition 4.1]{Dandan:2019}
Let $S$ be a semigroup and let $T$ be a homomorphic image of $S.$  If $S$ is right pseudo-finite then so is $T.$
\end{Lem}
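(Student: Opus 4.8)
The plan is to mimic the proof of Lemma~\ref{lem:actquotient}, now applied to the diagonal right $S$-act rather than an arbitrary act, since right pseudo-finiteness of a semigroup is by definition pseudo-finiteness of the semigroup regarded as a right act over itself, and the universal relation is being transported along a \emph{semigroup} homomorphism. The subtlety signalled by the phrase ``a similar argument'' is that the target $T$ is a $T$-act, not merely an $S$-act, so one must take care that the image of an $X$-sequence over $S$ is a legitimate sequence over the correct acting semigroup.

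First I would set up the homomorphism. Let $\theta\colon S\to T$ be a surjective semigroup homomorphism, and suppose $S$ is right pseudo-finite, so there is a finite $X\subseteq S$ with $\omega_S=\langle X\times X\rangle$ and $D_r(X,S)=n<\infty$. Put $Y=X\theta\subseteq T$, a finite set. Given any $c,d\in T$, surjectivity lets me choose $a,b\in S$ with $a\theta=c$ and $b\theta=d$; there is then an $X$-sequence over $S$ from $a$ to $b$ of length at most $n$,
\[
a=x_1s_1,\quad y_1s_1=x_2s_2,\quad\dots,\quad y_ks_k=b,
\]
with $(x_i,y_i)\in X\times X$ or $(y_i,x_i)\in X\times X$ and $s_i\in S^1$. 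Applying $\theta$ and using that $\theta$ is a homomorphism (and that $1\theta$ can be taken to be an identity of $T^1$, or that $S^1$ maps appropriately into $T^1$) converts this into a $Y$-sequence over $T$ of the same length $k\le n$ from $c$ to $d$. Hence $\omega_T=\langle Y\times Y\rangle$ and $D_r(Y,T)\le n$, so $D_r(T)\le n$ is finite and $T$ is right pseudo-finite.

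The main point requiring care, and the step I expect to be the only real obstacle, is the handling of the adjoined identity: the scalars $s_i$ lie in $S^1$, and I must ensure that $s_i\theta$ makes sense as an element of $T^1$ acting on $T$. If $S$ is a monoid and $\theta$ is a monoid homomorphism this is automatic; in the general semigroup case one argues that whenever some $s_i=1\in S^1$ the corresponding equation $x_i=x_{i-1}$-type step either collapses or is covered by adjoining $1$ to $T$ and extending $\theta$ by $1\mapsto 1$. This bookkeeping is exactly what Lemma~\ref{lem:actquotient} already encapsulates, so invoking the parallel structure keeps the argument short; everything else is the routine observation that a homomorphism sends an $X$-sequence to a $Y$-sequence of equal length.
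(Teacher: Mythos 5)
Your proposal is correct and follows exactly the route the paper intends: the paper gives no separate proof for this lemma, merely noting that ``a similar argument to that of Lemma~\ref{lem:actquotient} proves'' it, and your argument --- push an $X$-sequence through the surjection $\theta$ (extended to $S^1\to T^1$ by $1\mapsto 1$) to obtain a $Y$-sequence of the same length with $Y=X\theta$ --- is precisely that adaptation.
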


\begin{Lem}\label{lem:sgpvsmonoid} 
If $S$ is right pseudo-finite semigroup then so is $S^1.$ 
\end{Lem}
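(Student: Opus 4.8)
\textbf{The plan} is to take a finite generating set for $\omega_S$ that witnesses right pseudo-finiteness, enlarge it by a single pair that attaches the adjoined identity to the rest of $S^1$, and check that the diameter grows by at most one. Concretely, since $S$ is right pseudo-finite there is a finite set $X\subseteq S\times S$ with $\omega_S=\langle X\rangle$ and $D_r(X,S)=n<\infty$. I would fix any element $c\in S$ (semigroups being non-empty) and set $Y=X\cup\{(1,c)\}\subseteq S^1\times S^1$, which is again finite. The claim is that $Y$ generates $\omega_{S^1}$ with $D(Y,S^1)\leq n+1$, whence $S^1$ is right pseudo-finite.

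The verification splits according to which of the two elements to be connected equals the new identity $1$. For $a,b\in S$, any $X$-sequence from $a$ to $b$ in the right $S$-act $S$ uses scalars from $S^1$ (see Lemma~\ref{lem:sequence}), so it remains a valid $Y$-sequence in the right $S^1$-act $S^1$; thus $d_Y(a,b)\leq n$. The one genuinely new feature is reaching $1$: taking the acting scalar to be the identity $1\in S^1$, the pair $(1,c)$ yields the length-one $Y$-sequence $1=1\cdot 1,\ c\cdot 1=c$, so $d_Y(1,c)\leq 1$.

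Combining these by concatenating sequences, for $a\in S$ I obtain a $Y$-sequence from $1$ to $a$ of length at most $1+n$ (first go $1\to c$, then follow a $Y$-sequence $c\to a$), and symmetrically one from $a$ to $1$; finally $d_Y(1,1)=0$. Hence every pair in $S^1\times S^1$ is joined by a $Y$-sequence of length at most $n+1$, so $\omega_{S^1}=\langle Y\rangle$ and $D(Y,S^1)\leq n+1<\infty$. Therefore $D_r(S^1)\leq n+1$ and $S^1$ is right pseudo-finite.

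There is no serious obstacle here: the argument is essentially bookkeeping, and the only points that require a moment's care are that an $X$-sequence living in $S$ transfers verbatim to $S^1$ (because $S$ is a subsemigroup of $S^1$ and its scalars already lie in $S^1$), and that the adjoined identity can be reached in a single step by using $1$ itself as the acting scalar. Note that this works uniformly whether or not $S$ was already a monoid, since adjoining a \emph{new} identity leaves the multiplication on $S$—and hence all $X$-sequences within $S$—unchanged.
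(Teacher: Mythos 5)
Your argument is correct and is in substance the same as the paper's: the paper simply observes that $S$ is a pseudo-finite subact of the finitely generated $S^1$-act $S^1$ and cites Lemmas~\ref{Lem:STacts} and~\ref{lem:subact}, which together encode exactly your construction of $Y=X\cup\{(1,c)\}$ and the transfer of $X$-sequences from the $S$-act $S$ to the $S^1$-act $S^1$. Your inlined version is fine (and even gives the marginally sharper bound $D_r(S^1)\leq n+1$ rather than the $n+2$ that falls out of Lemma~\ref{lem:subact}).
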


\begin{proof} 
The $S^1$-act $S^1$ contains $S$ as a subact. This subact is pseudo-finite by Lemma \ref{Lem:STacts}, and hence 
$S^1$ is right pseudo-finite by Lemma \ref{lem:subact}.
\end{proof} 

The converse of the previous lemma is not true.  
For instance, let $S$ be any semigroup with zero that is not finitely generated as a right ideal (such as the infinite semilattice $S$ with zero in which $st=0$ for any $s\neq t$).  Then $S$ is not right pseudo-finite by Lemma \ref{lem:pffg}.  However, $S^1$ is right pseudo-finite by \cite[Corollary 2.15]{Dandan:2019}, since it is a monoid with zero.

\section{Pseudo-finiteness and Ideals}
\label{sec:pf,ideals}

We saw in Subsection \ref{subsec:fc} that for certain classes of semigroups, notably groups, right pseudo-finiteness is equivalent to finiteness.  On the other hand, as noted before, any monoid $S$ with a zero is right pseudo-finite.  We have already remarked that having a zero is the same as having a trivial minimal ideal. 
It is relatively easy to see that the assumption that $S$ be a monoid can be weakened to $S$ being finitely generated as a right ideal, and the assumption of the existence of a zero can be replaced with a finite minimal ideal; see also \cite[Corollary 8.2, Remark 8.9]{Dandan:2019}.

The foregoing discussions point to the following natural question: under what conditions, and in what ways, the presence of a minimal ideal implies right pseudo-finiteness of the semigroup.  This will be one of the guiding questions throughout this paper.  The following easy general result, which relates right pseudo-finiteness of a monoid with pseudo-finiteness of its right ideals and acts, will prove invaluable in these considerations.

\begin{Prop}\label{prop:rightideals}
The following are equivalent for a monoid $S$:
\begin{thmenumerate}
\item $S$ is right pseudo-finite;
\item $S$ has a right ideal that is pseudo-finite as a right $S$-act;
\item every principal right ideal of $S$ is pseudo-finite as a right $S$-act;
\item every finitely generated right ideal of $S$ is pseudo-finite as a right  $S$-act;
\item every monogenic right $S$-act is pseudo-finite;
\item every finitely generated right $S$-act is pseudo-finite.
\end{thmenumerate} 
\end{Prop}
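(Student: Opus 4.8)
The plan is to establish the six conditions equivalent by building a strongly connected web of implications, anchored on a single structural observation: since $S$ is a monoid with identity $1$, the right $S$-act $S$ is itself monogenic (indeed $S=1\cdot S^1$), so that $S$ is simultaneously a principal right ideal and a monogenic right $S$-act. Granting this, all the implications among (2)--(6) that pass from a stronger to a weaker hypothesis become routine bookkeeping: a monogenic act is finitely generated, giving (6)$\Rightarrow$(5); a finitely generated right ideal is a finitely generated right $S$-act, giving (6)$\Rightarrow$(4); a principal right ideal is finitely generated, giving (4)$\Rightarrow$(3); and since $S$ possesses at least one principal right ideal (namely $S$ itself), (3)$\Rightarrow$(2). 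The real content is concentrated in the single implication (1)$\Rightarrow$(6), together with the two closing implications (5)$\Rightarrow$(1) and (2)$\Rightarrow$(1) that return to right pseudo-finiteness.

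For the closing implications I would argue directly. For (5)$\Rightarrow$(1): the act $S$ is monogenic, so (5) applies to it and yields that $S$ is pseudo-finite as a right $S$-act, which is exactly (1). For (2)$\Rightarrow$(1): if $I$ is a right ideal of $S$ that is pseudo-finite as a right $S$-act, then $I$ is a subact of the right $S$-act $S$, and since $S$ is monogenic it is in particular finitely generated as a right $S$-act, so Lemma~\ref{lem:subact} applied with $A=S$ and $B=I$ gives that $S$ is pseudo-finite. These are precisely the two places where the monoid hypothesis is genuinely used, via the fact that $S$ is finitely generated (monogenic) as a right $S$-act.

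For the main implication (1)$\Rightarrow$(6) I would proceed in three steps. First I would show that $S^1$ is pseudo-finite as a right $S$-act: it is monogenic, generated by the identity of $S^1$, and it contains $S$ as a subact, which is pseudo-finite by (1), so Lemma~\ref{lem:subact} gives the claim. Next, for any element $u$ of a right $S$-act the map $S^1\to uS^1$, $s\mapsto us$, is a surjective homomorphism of right $S$-acts, so by Lemma~\ref{lem:actquotient} every monogenic act $uS^1$ is pseudo-finite. Finally, given a finitely generated right $S$-act $A=US^1$ with $U=\{u_1,\dots,u_k\}$ finite and nonempty, the monogenic subact $u_1S^1$ is pseudo-finite by the previous step while $A$ itself is finitely generated, so one last application of Lemma~\ref{lem:subact} (with $B=u_1S^1$) shows that $A$ is pseudo-finite, establishing (6).

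The step I expect to require the most care is (1)$\Rightarrow$(6), and within it the passage from $S$ to $S^1$: one must notice that although $S^1$ adjoins a (possibly new) identity, it remains monogenic and still contains $S$ as a pseudo-finite subact, so that pseudo-finiteness is inherited \emph{upwards} through Lemma~\ref{lem:subact} rather than lost. Working with $S^1$ rather than $S$ throughout also sidesteps any question of whether the acts in play are unital, since $s\mapsto us$ surjects $S^1$ onto $uS^1$ regardless. The only remaining subtlety is to recall that Lemma~\ref{lem:subact} requires merely a \emph{single} pseudo-finite subact of a finitely generated act, which is exactly what permits handling an arbitrary finite generating set $U$ by examining just one generator.
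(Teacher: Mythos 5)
Your proof is correct, and the implication structure you use ((1)$\Rightarrow$(6) as the sole substantive step, (2)$\Rightarrow$(1) via Lemma~\ref{lem:subact}, the rest by specialisation) matches the paper's. Where you genuinely diverge is in the proof of (1)$\Rightarrow$(6): the paper argues directly with sequences, taking a finite generating set $X$ for $\omega_S$ with $D_r(X,S)=D_r(S)$ and a finite generating set $U$ for $A$, forming $V=UX^1$, and explicitly transporting the $X$-sequence from $s$ to $1$ to a $V$-sequence from $a=us$ to $u$, which yields the concrete bound $D(A)\leq 2D_r(S)+1$. You instead chain the two lemmas through $S^1$: Lemma~\ref{lem:subact} to pass from $S$ up to $S^1$, Lemma~\ref{lem:actquotient} applied to the act homomorphism $S^1\to uS^1$, $s\mapsto us$, to get every monogenic act (hence (5) en route), and Lemma~\ref{lem:subact} once more to go from one monogenic subact $u_1S^1$ up to $A$. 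Your route is more modular and avoids writing down any sequence in $A$; it also handles the non-unital case cleanly, since $uS^1$ rather than $uS$ is the monogenic subact and the surjection is from $S^1$, whereas the paper's final step $(uy_k)s_k=u$ tacitly identifies $u\cdot 1_S$ with $u$. What the paper's direct computation buys is the explicit diameter bound $2D_r(S)+1$ stated in its proof (your chain of lemmas yields a finite bound of roughly $D_r(S)+4$ if one tracks the constants, which is also fine for the statement at hand). Both arguments are sound.
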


\begin{proof} 
The implications (6)$\Rightarrow$(4)$\Rightarrow$(2)
and (6)$\Rightarrow$(5)$\Rightarrow$(3)$\Rightarrow$(2) are straightforward, and an application of Lemma \ref{lem:subact} yields (2)$\Rightarrow$(1).

(1)$\Rightarrow$(6).  Let $A$ be a finitely generated $S$-act.  We claim that the diameter $D(A)$ of $A$ is at most $2D_r(S)+1,$ which is finite since $S$ is right pseudo-finite.  Let $X\subseteq S$ be a finite generating set for $\omega_S$ such that $D_r(X, S)=D_r(S).$  Now let $U$ be a finite generating set for $A$ and put $V=UX^1.$  Let $a, b\in A.$  Then $a=us$ and $b=vt$ for some $u, v\in U$ and $s, t\in S.$  By assumption we have an $X$-sequence
$$s=x_1s_1,\ y_1s_1=x_2s_2,\ \dots,\ y_ks_k=1$$ 
where $k\leq D_r(S).$  Hence, we have a $V$-sequence
$$a=(ux_1)s_1,\ (uy_1)s_1=(ux_2)s_2,\ \dots, \ (uy_k)s_k=u$$ 
from $a$ to $u.$  Similarly, there exists a $V$-sequence from $b$ to $v$ of length at most $D_r(S).$  Since $u, v\in V,$ we conclude that $a$ and $b$ can be connected by a $V$-sequence of length at most $2D_r(S)+1,$ as required.
\end{proof}

Combining Lemma \ref{Lem:STacts} and Proposition \ref{prop:rightideals}, we have:

\begin{Cor}
\label{cor:rightideal}
Let $S$ be a monoid and let $I$ be a right ideal of $S.$  If $I$ is right pseudo-finite (as a semigroup), then $S$ is right pseudo-finite.
\end{Cor}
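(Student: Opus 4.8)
The plan is to deduce the statement directly from Proposition~\ref{prop:rightideals}, whose equivalence (1)$\Leftrightarrow$(2) reduces everything to exhibiting \emph{some} right ideal of $S$ that is pseudo-finite as a right $S$-act. The natural candidate is $I$ itself, so the whole argument comes down to transferring the assumed pseudo-finiteness of $I$ from the action of $I$ on itself to the action of the larger monoid $S$ on $I$.

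First I would record two elementary observations. Since $I$ is a right ideal, it is nonempty and satisfies $I\cdot I\subseteq I\cdot S\subseteq I$, so $I$ is a subsemigroup of $S$; moreover $IS\subseteq I$ also says that $I$, with the right multiplication inherited from $S$, is a right $S$-act. Secondly, the hypothesis that $I$ is right pseudo-finite \emph{as a semigroup} means exactly that $I$ is pseudo-finite when regarded as a right act over itself, via multiplication within $I$.

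Next I would invoke Lemma~\ref{Lem:STacts}, taking $I$ in the role of the subsemigroup and $S$ in the role of the oversemigroup, with the act $A$ being $I$ itself. The underlying sets coincide and the two actions are compatible (the $S$-action restricts to the $I$-action), so the lemma upgrades pseudo-finiteness of $I$ as an $I$-act to pseudo-finiteness of $I$ as an $S$-act: concretely, any $X$-sequence whose multipliers lie in $I$ is a fortiori an $X$-sequence with multipliers in $S$, so no diameter can grow. Consequently $I$ is a right ideal of $S$ that is pseudo-finite as a right $S$-act, condition (2) of Proposition~\ref{prop:rightideals} is met, and therefore $S$ is right pseudo-finite.

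There is no real obstacle in this argument; it is a direct assembly of the two cited results. The only point demanding care is the bookkeeping in Lemma~\ref{Lem:STacts}: one must match $I$ to the smaller semigroup and $S$ to the larger one, and verify that the $I$-act and $S$-act structures on the set $I$ genuinely agree. The monoid hypothesis on $S$ is used solely through Proposition~\ref{prop:rightideals}, which is stated for monoids.
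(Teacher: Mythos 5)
Your argument is exactly the paper's: the corollary is stated there as an immediate consequence of combining Lemma~\ref{Lem:STacts} (applied with $I$ as the subsemigroup and $A=I$ as the $S$-act) with the equivalence (1)$\Leftrightarrow$(2) of Proposition~\ref{prop:rightideals}. Your bookkeeping of the two act structures on $I$ is correct, so the proof is fine.
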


Consider a right ideal $I$ of a monoid $S.$  If $I$ has an identity, then it is a retract of $S.$  Indeed, letting $1_I$ denote the identity of $I,$ define a map $\theta : S\to I$ by $s\theta=1_Is.$  For any $s, t\in S,$ we have
$$(st)\theta=1_I(st)=(1_Is)t=\bigl((1_Is)1_I\bigr)t=(1_Is)(1_It)=(s\theta)(t\theta),$$
so $\theta$ is a homomorphism.  Clearly $\theta|_I$ is the identity map on $I,$ so $\theta$ is a retraction, as required.  (In fact, the converse also holds: if $I$ is a retract of $S$ via a retraction $\theta : S\to I,$ then $I$ has identity $1_S\theta.$)  From this observation and Lemma \ref{lem:quotient}, along with Corollary \ref{cor:rightideal}, we deduce:

\begin{Cor}
\label{cor:monoidideal}
Let $S$ be a monoid, and let $I$ be a right ideal of $S$ that has an identity.  Then $S$ is right pseudo-finite if and only if $I$ is right pseudo-finite.
\end{Cor}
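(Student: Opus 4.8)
The plan is to treat the two implications separately, since each is essentially delivered by a result established immediately above the corollary.

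For the implication that right pseudo-finiteness of $I$ forces that of $S$, I would appeal directly to Corollary \ref{cor:rightideal}: as $I$ is a right ideal of the monoid $S$ and is assumed right pseudo-finite as a semigroup, that corollary gives right pseudo-finiteness of $S$ at once. It is worth noting that this direction does not invoke the hypothesis that $I$ has an identity; it holds for any right pseudo-finite right ideal.

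For the converse, I would exploit the retraction constructed in the paragraph preceding the corollary. There it is checked that $\theta : S \to I$, $s\theta = 1_I s$, is a semigroup homomorphism with $\theta|_I = \mathrm{id}_I$; in particular $\theta$ is surjective, so $I$ is a homomorphic image of $S$ as a semigroup. Assuming $S$ is right pseudo-finite, Lemma \ref{lem:quotient} then immediately yields that $I$ is right pseudo-finite. Combining the two halves gives the stated equivalence.

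The genuine content sits entirely in this forward direction, and it is carried by the retraction: the hypothesis that $I$ possesses an identity is precisely what is needed to realise $I$ as a \emph{quotient} of $S$, rather than merely a subsemigroup, after which Lemma \ref{lem:quotient} applies with no further effort. I therefore do not expect any real obstacle. The only point requiring a moment's care is that the image of $\theta$, equipped with the multiplication inherited from $S$, coincides with $I$ as a semigroup, so that Lemma \ref{lem:quotient} is being applied to $I$ itself; but the homomorphism computation and the equality $\theta|_I = \mathrm{id}_I$ recorded just before the corollary already confirm this.
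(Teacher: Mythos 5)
Your proposal is correct and follows exactly the paper's own route: the paper likewise derives the forward direction from the retraction $\theta : S \to I$, $s\theta = 1_I s$, together with Lemma~\ref{lem:quotient}, and the reverse direction from Corollary~\ref{cor:rightideal}. Your observation that the identity of $I$ is needed only for the forward direction is also consistent with how the paper organises the material.
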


Going in the converse direction, we may wonder in what situations right pseudo-finiteness of a semigroup implies the existence of minimal ideals, or even minimal ideals of a certain kind.  This is certainly the case in all instances where right pseudo-finiteness implies finiteness, as discussed in Subsection \ref{subsec:fc}, since we noted earlier that a finite semigroup must possess a completely simple minimal ideal.
Also, if $S$ is right pseudo-finite with exactly one minimal left ideal $L$ and exactly one minimal right ideal $R$, then by \cite[Theorem 4.2]{Clifford:1948} we have that $L=R$ is the minimal ideal of $S$ and is also a group.
It follows from Corollary \ref{cor:monoidideal} and Proposition \ref{prop:wlc} that this group must be finite. 
 
Returning to various natural semigroups with cyclic diagonal acts encountered in Theorem \ref{thm:diag}, we remark that the monoids $\mathcal{B}_X$, $\mathcal{P}_X$ and $\mathcal{T}_X$ each have a completely simple minimal ideal.  Indeed, the former two both contain a zero element, and the minimal ideal of $\mathcal{T}_X$ is a right zero semigroup, consisting of all the constant maps on $X$ (this minimal ideal is infinite since $X$ is infinite).
The monoid $\mathcal{F}_X$ turns out to be {\em bisimple}, meaning that it has a single $\mathcal{D}$-class, and hence regular (since any bisimple monoid is regular).  (The proof that $\mathcal{F}_X$ is bisimple is essentially the same as the proof that the similarly-defined monoid $\mathcal{M}(X)$ is bisimple; see \cite[Section 8.6]{Clifford:1967}.  We note that $\mathcal{M}(X)=\mathcal{F}_X$ when $X$ is countable.)  The monoid $\mathcal{F}_X$ is not completely simple; indeed, it can be easily deduced from the Rees-Suschkewitsch representation, given in Section \ref{sec:prelim}, that a monoid is completely simple if and only if it is a group, and $\mathcal{F}_X$ is certainly not a group.  Thus there exist right pseudo-finite (regular) monoids with minimal ideals that are not completely simple.

Given any infinite set $X,$ the Baer--Levi semigroup 
$$\mathcal{BL}_X=\{\alpha\in\mathcal{T}_X : \alpha\text{ is injective}, |X\!\setminus\!X\alpha|=|X|\}$$
is a right simple, right cancellative semigroup without idempotents (so certainly not competely simple) \cite[Theorem 8.2]{Clifford:1967}, and is right pseudo-finite \cite[Remark 7.3]{Miller:2020}.  It can be easily shown that $\mathcal{BL}_X$ is the minimal ideal of the monoid $\Inj_X$ of all injective mappings on $X.$  Thus, by Corollary \ref{cor:rightideal}, we have:

\begin{Prop}
For any infinite set $X,$ the monoid $\Inj_X$ is right pseudo-finite.
\end{Prop}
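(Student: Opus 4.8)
The plan is to derive the result immediately from Corollary \ref{cor:rightideal}: I take the ambient monoid to be $S=\Inj_X$ and exhibit inside it a right ideal that is already known to be right pseudo-finite as a semigroup. The Baer--Levi semigroup $\mathcal{BL}_X$ is the obvious candidate, and the argument then comes down to assembling two ingredients, both of which are recorded in the discussion preceding the statement.

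First I would confirm that $\mathcal{BL}_X$ is a right ideal of $\Inj_X.$ Since $\mathcal{BL}_X$ is the minimal (two-sided) ideal of $\Inj_X,$ this is automatic; a direct verification is equally painless, since for $\alpha\in\mathcal{BL}_X$ and $\beta\in\Inj_X$ the composite $\alpha\beta$ is injective and one has $(X\setminus X\alpha)\beta\subseteq X\setminus X\alpha\beta,$ whence $|X\setminus X\alpha\beta|=|X|$ by injectivity of $\beta.$ Thus $\alpha\beta\in\mathcal{BL}_X.$

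Second I would invoke the fact, established in \cite[Remark 7.3]{Miller:2020}, that $\mathcal{BL}_X$ is right pseudo-finite as a semigroup. With a right pseudo-finite right ideal of $\Inj_X$ now in hand, Corollary \ref{cor:rightideal} applies verbatim and yields that $\Inj_X$ is right pseudo-finite.

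In truth the proposition presents no real obstacle: it is a clean corollary of the machinery already developed, and the genuine content lives in the two imported ingredients. Of these, the substantive one is the right pseudo-finiteness of $\mathcal{BL}_X,$ which reflects its right-simple, right-cancellative, idempotent-free structure rather than any finiteness of its Green's structure. It is worth flagging that, because $\mathcal{BL}_X$ has no idempotents, the minimal ideal of $\Inj_X$ is \emph{not} completely simple, so this example further illustrates that right pseudo-finiteness of a monoid need not force a completely simple minimal ideal.
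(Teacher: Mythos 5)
Your proposal is correct and is essentially identical to the paper's argument: the paper likewise observes that $\mathcal{BL}_X$ is an ideal of $\Inj_X$, cites \cite[Remark 7.3]{Miller:2020} for its right pseudo-finiteness, and concludes via Corollary \ref{cor:rightideal}. Your direct verification that $\mathcal{BL}_X$ is a right ideal is sound and a welcome addition, since the paper leaves that step to the reader.
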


\begin{Rem} Let $X$ be an infinite set.  The monoid $\Inj_X$ is $\ar^{\ast}$-simple since it coincides with the $\ar$-class of the identity of $\mathcal{T}_X.$  It follows from the dual of Proposition \ref{prop:L*-simple} that no infinite subsemigroup of $\Inj_X$ is left pseudo-finite.
\end{Rem}
 
From the preceding discussion a potentially intricate landscape begins to emerge, relating the property of pseudo-finiteness with the existence and/or nature of minimal ideals. The aim of this paper is to provide an in-depth exploration of this landscape.

\section{Completely Simple Minimal Ideals}\label{sec:csmi}

For the remainder of the paper we focus on monoids, since if $S$ is right pseudo-finite/has a minimal ideal, then the same properties are true of $S^1$.  In this section we discuss the relationship between the property of being right pseudo-finite and the existence of a completely simple minimal ideal.  We first establish a result that characterises right pseudo-finiteness in the presence of a completely simple minimal ideal.  We then discover various classes of semigroup for which being right pseudo-finite implies the existence of such an ideal.

The following result provides two necessary and sufficient conditions for a monoid with a completely simple minimal ideal to be right pseudo-finite.  The first is new, whereas the second was indicated in \cite[Remark 8.9]{Dandan:2019} where it was noted that the results of that section, which concerned the universal left congruence being finitely generated, could be modified to the (left) pseudo-finite case.  In fact, the modifications in this instance are significant, and we give a direct argument below. 

The statement features the action of a semigroup on the $\eh$-classes in a minimal right ideal; this was introduced in Section \ref{sec:prelim}.

\begin{Thm}\label{thm:csmi} 
Let $S$ be a monoid with a completely simple minimal ideal $K.$  Then the following three statements are equivalent.
\begin{thmenumerate}
\item $S$ is right pseudo-finite.
\item $S$ satisfies the following two conditions:
\begin{enumerate}
\item there exists a (completely simple) left ideal $K_0$ of $K$ such that $K_0$ is the union of finitely many $\el$-classes and $K_0^1$ is right pseudo-finite;
\item for any $\ar$-class $R$ of $K,$ the right $S$-act $R/\eh$ is pseudo-finite.
\end{enumerate}
\item $S$ satisfies the following two conditions:
\begin{enumerate}
\item there exists a left ideal $K_0$ of $K$ such that $K_0$ is the union of finitely many $\el$-classes and any maximal subgroup $G=H_e$ of $K_0$ has finite ($F\cup V$\!)-diameter, where $F\subseteq G$ is finite and 
$$V=\{fg : f, g \in E(K_0), f\,\ar\,e\,\el\,g\}\subseteq G;$$
\item for any $\ar$-class $R$ of $K,$ the right $S$-act $R/\eh$ is pseudo-finite.
\end{enumerate}
\end{thmenumerate}
\end{Thm}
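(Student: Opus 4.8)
Throughout, I would fix a Rees representation $K=\mathcal{M}[G;I,J;P]$ with $P$ in normal form, so that the minimal right ideals of $S$ are the rows $R_i=\{i\}\times G\times J$, the minimal left ideals are the columns $L_j=I\times G\times\{j\}$, and the $\eh$-classes are the boxes $H_{ij}=\{i\}\times G\times\{j\}$. A left ideal of $K$ that is a finite union of $\el$-classes then has the form $K_0=I\times G\times J_0$ with $J_0\subseteq J$ finite, and is itself the completely simple semigroup $\mathcal{M}[G;I,J_0;P_0]$, so the parenthetical ``(completely simple)'' in (2)(a) is automatic. The key preliminary observation is that, under the identification $R_i/\eh\cong J$, the right $S$-action on $R_i/\eh$ is a single action of $S$ on the column set $J$ (the same for every row, by associativity of the left translations relating rows), that this action is transitive, and that $R_i/\eh$ is monogenic as a right $S$-act, since $H_{ij}\cdot(i,1_G,j')\subseteq H_{ij'}$ reaches every column. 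Thus (2)(b) and (3)(b) coincide and simply assert that the column action $S\curvearrowright J$ is pseudo-finite. My plan is to prove $(1)\Leftrightarrow(2)$ and $(2)\Leftrightarrow(3)$.

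For $(1)\Rightarrow(2)$, condition (b) is the easy half: since $R/\eh$ is monogenic, Proposition~\ref{prop:rightideals} gives that it is pseudo-finite once $S$ is right pseudo-finite. For (a) I would start from a finite generating set $X$ for $\omega_S$ of diameter $n=D_r(S)$, and use it to isolate a finite set of columns: every element of $K$ is reached from $1$ by an $X$-sequence of length at most $n$, and tracking the columns occurring in $X\cap K$ together with those arising in short products of elements of $X$ pins down a finite $J_0\subseteq J$. Setting $K_0=I\times G\times J_0$, I would then show that the restricted $X$-sequences, together with a generator encoding the identity, furnish a finite generating set for $\omega_{K_0^1}$ of bounded diameter. \emph{Honestly pinning down the correct finite $J_0$, and verifying it captures enough of the relevant $X$-sequences, is the first real obstacle.}

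For $(2)\Leftrightarrow(3)$ both conditions share (b), so everything reduces to an internal analysis of right pseudo-finiteness of the completely simple \emph{monoid} $K_0^1$ with $J_0$ finite. Connecting two elements of $K_0^1$ by a bounded sequence decouples into three independent directions: \textbf{(i)} the row direction $I$ (possibly infinite), handled in a single step by any generating pair involving the adjoined identity, since $1\cdot s$ may land in any row while $a\cdot s$ stays in the fixed row of $a$; \textbf{(ii)} the column direction, which is finite and so contributes only a bounded number of steps; and \textbf{(iii)} the group direction inside a fixed box $H_{ij}\cong G$, where adjusting the $G$-coordinate forces a detour out of the box and back, whose net effect is to left-multiply by a product of sandwich entries, that is, by an element of $V=\{fg:f,g\in E(K_0),\,f\,\ar\,e\,\el\,g\}$. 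Matching the group-direction connectivity of $\omega_{K_0^1}$ against generation of $\omega_G$ by $F\cup V$ (a finite $F$ for the ``free'' generators and $V$ for the structural ones) is where the two formulations meet, and this bookkeeping with the normal-form entries $p_{j,i}$ is the most delicate computation in the theorem.

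For $(2)\Rightarrow(1)$ I would assemble a finite $Y$ generating $\omega_S$ of bounded diameter by concatenating three moves. Fixing $k\in K$, the pair $(1,k)$ sends an arbitrary $s\in S$ to $ks\in R_{i_0}$ in one step, driving both endpoints into a single fixed row. The pseudo-finiteness of the column action (b) then moves both columns into $J_0$ in boundedly many steps, placing both endpoints in $K_0$; and the right pseudo-finiteness of $K_0^1$ from (a) completes the connection inside $K_0$. The one point needing care is that a $K_0^1$-sequence may pass through the adjoined identity $1_{K_0^1}\notin S$; I would circumvent this by re-realizing each such row-jump inside $S$ using $1_S$ and $k$, which is legitimate precisely because $S$ is a monoid. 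Combining these bounds and invoking Proposition~\ref{prop:rightideals} yields right pseudo-finiteness of $S$. \emph{The overarching obstacle across all parts is keeping the three coordinate directions genuinely independent while correctly accounting for the adjoined identity, which plays different roles in $K_0^1$ and in $S$.}
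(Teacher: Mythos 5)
Your framing of the setting is sound --- the identification of left ideals of $K$ that are unions of finitely many $\el$-classes with column sets $I\times G\times J_0$, the observation that $R/\eh$ is monogenic and that the column action is the same for every row, and the overall architecture $(1)\Rightarrow(2)$, $(2)\Leftrightarrow(3)$, back to $(1)$ all match the shape of the actual argument. But the proposal defers precisely the steps that carry the mathematical content, and at one of them the sketch as written would fail. The central issue is the set $V=\{fg : f,g\in E(K_0),\ f\,\ar\,e\,\el\,g\}$: since $K_0$ has finitely many $\el$-classes but possibly infinitely many $\ar$-classes, there are finitely many choices of $f$ but possibly \emph{infinitely} many choices of $g$, so $V$ may be infinite. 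This is the whole reason condition (3a) is phrased in terms of the $(F\cup V)$-diameter rather than as ``$G$ is right pseudo-finite''. Your sketch of $(2)\Leftrightarrow(3)$ and of the return trip to $(1)$ treats $V$ as a bounded stock of ``structural generators'' to be matched against a finite generating set; that cannot work directly, because a finite generating set for $\omega_{K_0^1}$ (or for $\omega_R$) cannot contain $V$. The paper's resolution is the key trick you are missing: each step of an $(F\cup V)$-sequence that uses $w=fg\in V$ is replaced by \emph{two} steps passing through the idempotents $f$ and $e$, which lie in the finite set $E(K_0)\cap R$ (finite exactly because $K_0$ has finitely many $\el$-classes). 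Without this splitting, $(3)\Rightarrow(1)$ does not close.

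Two further points where the outline glosses over the actual mechanism. First, in $(1)\Rightarrow(2)$(a) you say you would ``track the columns occurring in $X\cap K$ and in short products''; the working construction is to fix an idempotent $e\in K$, take $K_0=\bigcup_{x\in X}L_{ex}$, and convert an $X$-sequence from $a$ to the idempotent $f\in H_{ea}$ into a sequence inside $K_0$ by multiplying on the left by $e$ and on the right by $f$ --- the point being that conjugating by idempotents of $K_0$ keeps every term of the sequence inside $K_0\cap R_e$, which your ``tracking'' does not obviously achieve. Second, your $(2)\Rightarrow(1)$ assembly (``drive into one row, move both columns into $J_0$, finish inside $K_0$'') does not decouple as cleanly as stated: pseudo-finiteness of $R/\eh$ yields a bounded chain of $\eh$-classes of $R$ linking $[ks]_\eh$ to $[kt]_\eh$, not a way of steering columns into $J_0$; lifting that chain to a sequence of elements forces you to bridge, within each $\eh$-class along the chain (which need not meet $K_0$), a group-direction gap, and this is done by right-multiplying an $(F\cup V)$-sequence in $G$ by the local idempotent $h\in H_u$ and then applying the $V$-splitting above. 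So the sequences at the $\eh$-level and at the group level must be interleaved rather than performed in succession. In short: the skeleton is right, but the three places you flag as ``obstacles'' are where the theorem actually lives, and at least one of them (the infinitude of $V$) requires an idea not present in the proposal.
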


\begin{proof}
(1)$\Rightarrow$(2).  We first prove that (2a) holds.  Let $X\subseteq S$ be a finite generating set for $\omega_S,$ and let $n=D(X, S).$  Fix an idempotent $e\in K.$  We may assume that $e\in X.$  Let $V=\{ex : x\in X\}\subseteq R_e,$ and let 
$$K_0=\bigcup_{v\in V}L_v.$$  
Let $$Y=\{1, ex : x\in X\}\cup\bigl(E(K_0)\cap R_e\bigr)\subseteq(K_0\cap R_e)^1.$$
Clearly $Y$ is finite.  We claim that the $Y$-diameter of $K_0^1$ is no more than $2n+3.$  Indeed, let $a, b\in K_0.$  Let $f$ be the idempotent in the $\eh$-class of $ea,$ and let $g$ be the idempotent in the $\eh$-class of $eb.$  Then $f, g\in E(K_0)\cap R_e\subseteq Y.$  Now, there exists an $X$-sequence
$$a=x_1s_1,\ y_1s_1=x_2s_2,\ \dots,\ y_ks_k=f$$
in $S,$ where $k\leq n.$
Therefore, we have a $Y$-sequence
$$a=1a,\ ea=eaf=(ex_1)(s_1f),\ (ey_1)(s_1f)=(ex_2)(s_2f),\ \dots,\ (ey_k)(s_k)f=ef^2=f$$
in $K_0$ that has length $k+1.$  Similarly, there exists a $Y$-sequence of length at most $n+1$ from $b$ to $g.$  Since $f, g\in Y,$ we conclude that there exists a $Y$-sequence of length at most $2n+3$ from $a$ to $b,$ as required.\par
For (2b), let $R$ be any $\ar$-class of $K.$  Then $R$ is a pseudo-finite as a right $S$-act by Proposition \ref{prop:rightideals}, and hence the quotient $R/\mathcal{H}$ is pseudo-finite by Lemma \ref{lem:actquotient}.\par
(2)$\Rightarrow$(3).  Condition (3b) is identical to (2b), so we just need to prove that (3a) holds.  Let $K_0$ be as given in (2a).  In particular, $K_0$ is the union of finitely many $\el$-classes.  Consider a maximal subgroup $G=H_e$ of $K_0.$  Let $T=K_0^1.$  Since $T$ is right pseudo-finite, there exists a finite set $Y\subseteq T$ such that $\omega_T=\langle Y\rangle$ and the $Y$-diameter of $T$ is finite, say $n.$
Let $F=\{e,\, eye: y\in Y\},$ and let $X=V\cup F$ where $V$ is as given in the statement.  Clearly $F$ is finite, but $V$ may be infinite.  We claim that $\omega_G=\langle X\rangle$ and that $X$-diameter of $G$ is no greater than $3n.$  Indeed, let $u, v\in G.$  Then there exists a $Y$-sequence
\begin{equation}\label{eq:1} 
u=x_1t_1,\, y_1t_1=x_2t_2,\hdots, y_kt_k=v
\end{equation}
in $T,$ where $k\leq n$.  Let $x_i'=ex_ie, y_i'=ey_ie, t_i'=et_ie,$ let $e_i, f_i, g_i$ be the idempotents in the $\eh$-classes of
$ex_i, ey_i, t_ie,$ respectively, and let $a_i=e_ig_i$ and $b_i=f_ig_i.$  The elements are arranged in the following egg-box pattern.

\[\begin{array}{ |c|c|c|c|c|c|c|}
\hline
\begin{array}{c}
e, x_i', y_i'\\
t_i^{\prime}, a_i, b_i\end{array}&\cdots&ex_i, e_i&\phantom{xx}&ey_i, f_i&\cdots&\phantom{xx}\\ \hline
\vdots &&&&&&
\\ \hline
t_ie, g_i&&&&&&\\ \hline
\vdots&&&&&&\\ \hline
&&&&&&\\ \hline
\end{array}
\vspace{0.7em}\] 
Note that $x_i^{\prime}, y_i^{\prime}\in F$ and $a_i, b_i\in V.$  We claim that we have a sequence
\begin{equation}\label{eq:2} 
u=x_1'a_1t_1',\,  y_1'b_1t_1'=x_2'a_2t_2',\hdots, y_k'b_kt_k'=v.
\end{equation}
To see this, observe that
\[x_i'a_it_i'=(ex_ie)(e_ig_i)(et_ie)=(ex_i)(ee_i)(g_ie)(t_ie)=(ex_i)e_ig_i(t_ie)=(ex_i)(t_ie).\]
Similarly, we have $y_i'a_it_i'=(ey_i)(t_ie).$  Thus, multiplying the sequence \eqref{eq:1} both on the left and right by $e$ yields the sequence \eqref{eq:2}.  Now, for each $i\in\{1, \dots, k\},$ there exists an $X$-sequence
$$x_i'a_it_i'=x_i'(a_it_i'),\, e(a_it_i')=a_it_i',\, b_it_i'=e(b_it_i'),\, y_i'(b_it_i')=y_i'b_it_i',$$
which has length 3.  We conclude that there exists an $X$-sequence of length no greater than $3n$ from $u$ to $v,$ as required.

(3)$\Rightarrow$(1).  Fix $e\in K_0$ and let $R=R_e.$  By Proposition \ref{prop:rightideals}, it suffices to prove that $R$ is pseudo-finite as a right $S$-act.  Let $G=H_e.$  By (3a), $G$ has finite $(F\cup V)$-diameter, say $n,$ where $F$ and $V$ are as given in the statement.  By (3b), the quotient $A=R/\eh=\{ [a]_{\mathcal{H}}\, :\, a\in R\}$ is pseudo-finite.  Let $\omega_A=\langle Y\rangle$ for some finite set $Y\subseteq A,$ and let $m$ be the $Y$-diameter of $A.$  For each $y\in Y$ choose $x_y\in R$ such that $y=[x_y]_{\mathcal{H}},$ and let $X=\{x_y : y\in Y\}.$  We claim that $\omega_R$ is generated by the finite set
$$Z=F\cup\bigl(E(K_0)\cap R\bigr)\cup X,$$
and that the $Z$-diameter of $R$ is no greater than $2n(m+1)+m.$\par
We first claim that for any $u, v\in R$ such that $u\,\eh\,v,$ there exists a $Z$-sequence of length no greater than $2n$ from $u$ to $v.$
Indeed, let $u$ and $v$ be as given above.  If $u=v$ then we are done, so assume that $u\neq v.$  Let $h$ be the idempotent in $H_u=H_v.$  We have that $ue, ve\in G,$ so there exists an $(F\cup V)$-sequence
$$ue=u_1s_1, v_1s_1=u_2s_2, \dots, v_ks_k=ve$$
where $k\leq n.$  Since $eh=h$ and $uh=u, vh=v,$ multiplying the above sequence on the right by $h,$ we obtain an $(F\cup V)$-sequence
$$u=u_1s_1h, v_1s_1h=u_2s_2h, \dots, v_ks_kh=v.$$
If $u_i, v_i\in F$ for all $i\in\{1, \dots, k\},$ then we have an $F$-sequence from $u$ to $v,$ and we are done.  So suppose otherwise, and consider $(w, z)\in\{(u_i, v_i), (v_i, u_i)\}$ such that $w\in V.$  Then $w=fg$ where $f, g \in E(K_0)$ and $f\,\ar\,e\,\el\,g.$  Since $e, f\in E(K_0)\cap R,$ we have a $Z$-sequence
$$ws_ih=f(gs_ih),\, e(gs_ih)=es_ih.$$
If $z\in V$ then, by the same argument, there exists a $Z$-sequence of length 1 from $zs_ih$ to $es_ih.$  Otherwise, if $z\in F,$ then clearly we have a $Z$-sequence of length 1 from $zs_ih$ to $es_ih.$  It follows that there is a $Z$-sequence of length 2 from $ws_ih$ to $zs_ih.$  We conclude that there is a $Z$-sequence of length no greater than $2n$ from $u$ to $v,$ establishing the claim.\par
Now let $a, b\in R.$  Then $[a]_{\mathcal{H}}, [b]_{\mathcal{H}}\in A,$ so there exist a $Y$-sequence
$$[a]_{\mathcal{H}}=y_1t_1, z_1t_1=y_2t_2, \dots, z_lt_l=[b]_{\mathcal{H}},$$
where $y_i, z_i\in Y,$ $t_i\in S^1$ and $l\leq m.$  Letting $x_i=x_{y_i}$ and $x_i^{\prime}=x_{z_i},$ we deduce that
$$a\,\eh\,x_1t_1,\, x_1^{\prime}t_1\,\eh\,x_2t_2,\, \dots, x_l^{\prime}t_l\,\eh\,b.$$
Note that $x_i, x_i^{\prime}\in X.$  By the above claim, for each pair 
$(u, v)$ in
$$\{(a, x_1t_1), (x_i^{\prime}t_i, x_{i+1}t_{i+1}), (x_l^{\prime}t_l, b) : 1\leq i\leq l-1\},$$ there exists a $Z$-sequence of length no greater than $2n$ from $u$ to $v.$  By interleaving these sequences with single steps from $x_it_i$ to $x_it_i^{\prime},$ we obtain a $Z$-sequence of length no greater than $2n(m+1)+m$ from $a$ to $b.$  This completes the proof.
\end{proof}

\begin{Cor}
\label{cor:csmi}
Let $S$ be a right pseudo-finite monoid with a completely simple minimal ideal $K.$  If $K$ has finitely many $\ar$-classes, then its maximal subgroups are finite.
\end{Cor}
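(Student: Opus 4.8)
The plan is to read off the conclusion from the implication (1)$\Rightarrow$(3) of Theorem~\ref{thm:csmi}, whose condition (3a) is phrased directly in terms of a diameter of a maximal subgroup of $K$. The whole point will be to observe that, under the extra hypothesis, the generating set appearing in (3a) is \emph{finite}, so that (3a) literally says that a certain group is right pseudo-finite.

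First I would fix the Rees--Suschkewitsch coordinatisation $K\cong\mathcal{M}[G;I,J;P]$ with $G$ a maximal subgroup, in which the $\ar$-classes of $K$ are indexed by $I$; thus the hypothesis ``$K$ has finitely many $\ar$-classes'' means exactly $|I|<\infty$. Since $S$ is right pseudo-finite, Theorem~\ref{thm:csmi}(3) applies and furnishes a left ideal $K_0$ of $K$ that is a union of finitely many $\el$-classes. As a union of $\el$-classes, $K_0$ corresponds to a finite subset $J_0\subseteq J$, so $K_0\cong\mathcal{M}[G;I,J_0;P_0]$ is itself completely simple, with $|I|<\infty$ many $\ar$-classes and $|J_0|<\infty$ many $\el$-classes.

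The key observation is that $K_0$ therefore has only finitely many idempotents: in the Rees coordinates each $\eh$-class $H_{ij}$ is a group containing a unique idempotent, so $|E(K_0)|=|I|\cdot|J_0|<\infty$. Consequently, for the idempotent $e$ appearing in (3a), the set
$$V=\{fg : f, g\in E(K_0),\ f\,\ar\,e\,\el\,g\}\subseteq G$$
is finite, since $f$ ranges over the finitely many idempotents of $K_0$ in $R_e$ and $g$ over the finitely many idempotents of $K_0$ in $L_e$. Hence $F\cup V$ is a finite subset of $G$, and condition (3a) asserts precisely that $\omega_G=\langle F\cup V\rangle$ and that $G$ has finite $(F\cup V)$-diameter; in other words, the group $G$ is right pseudo-finite.

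Finally, since $G$ is a group it is left cancellative, hence weakly left cancellative, so Proposition~\ref{prop:wlc} forces $G$ to be finite. As every maximal subgroup of the completely simple semigroup $K$ is isomorphic to $G$, all maximal subgroups of $K$ are finite, as required. The only step I would treat as the crux is the finiteness of $V$: everything hinges on combining ``finitely many $\ar$-classes'' with ``$K_0$ a union of finitely many $\el$-classes'' to bound $E(K_0)$, after which the result is a direct reading of Theorem~\ref{thm:csmi}(3a) together with Proposition~\ref{prop:wlc}.
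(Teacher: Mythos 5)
Your proposal is correct and follows essentially the same route as the paper's proof: both use Theorem~\ref{thm:csmi}(3) to obtain $K_0$, combine the finiteness of the number of $\ar$-classes of $K$ with that of the $\el$-classes of $K_0$ to conclude $E(K_0)$ is finite, deduce that $V$ (and hence $F\cup V$) is finite so that $G$ is right pseudo-finite, and finish with Proposition~\ref{prop:wlc}. The only cosmetic difference is that you carry out the counting of idempotents in Rees--Suschkewitsch coordinates, whereas the paper argues abstractly that $\ar$ on the regular subsemigroup $K_0$ is the restriction of $\ar$ on $K$.
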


\begin{proof}
By Theorem \ref{thm:csmi}, there exists a left ideal $K_0$ of $K$ such that $K_0$ is the union of finitely many $\el$-classes and any maximal subgroup $G=H_e$ of $K_0$ has finite ($F\cup V$\!)-diameter, where $F\subseteq G$ is finite and 
$$V=\{fg : f, g \in E(K_0), f\,\ar\,e\,\el\,g\}.$$
Since every maximal subgroup of $K$ is isomorphic to $G,$ it suffices to prove that $G$ is finite.
Since $K_0$ is completely simple (and hence regular), Green's relation $\ar$ on $K_0$ is the restriction of Green's relation $\ar$ on $K$ \cite[Proposition 2.4.2]{Howie:1995}.  Therefore, since $K$ has finitely many $\ar$-classes, so does $K_0.$  Since $K_0$ has finitely many $\el$-classes, we conclude that $K_0$ is the union of finitely many maximal subgroups.  Thus $E(K_0)$ is finite.  It follows that $V$ is finite.  Since $F$ is finite, we have that $F\cup V$ is finite, and hence $G$ is right pseudo-finite.  Then $G$ is finite by Proposition \ref{prop:wlc}.
\end{proof}

\begin{Rem}
\label{rem:csmi}
If a monoid $S$ has a completely simple minimal ideal $K$ whose maximal subgroups are finite, then $S$ clearly satisfies condition (3a) of Theorem \ref{thm:csmi} (where $K_0$ can be taken to be any $\el$-class of $K$), so $S$ is right pseudo-finite if and only if for any $\ar$-class $R$ of $K$ the right $S$-act $R/\eh$ is pseudo-finite.
\end{Rem}

\begin{Rem}
It is possible for a right pseudo-finite monoid to have a completely simple minimal ideal that has finitely many $\ar$-classes and infinitely many $\el$-classes.  Indeed, as discussed in Section \ref{sec:pfintro}, the full transformation monoid $\mathcal{T}_X$ on an infinite set $X$ is right pseudo-finite and has a minimal ideal that is an infinite right zero semigroup, which has a single $\ar$-class and infinitely many $\el$-classes.
\end{Rem}

Specialising Theorem \ref{thm:csmi} to completely simple semigroups with $1$ adjoined, we obtain:

\begin{Cor}
\label{Cor:cs1}
Let $K$ be a completely simple semigroup and let $S=K^1.$  Then $S$ is right pseudo-finite if and only if: 
\begin{thmenumerate}
\item $K$ has finitely many $\el$-classes; and 
\item any maximal subgroup $G=H_e$ of $K$ has finite ($F\cup V$\!)-diameter, where $F\subseteq G$ is finite and $V=\{fg : f, g \in E(K), f\,\ar\, e\,\el\, g\}.$
\end{thmenumerate}
\end{Cor}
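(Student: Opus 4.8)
The plan is to read this off from Theorem \ref{thm:csmi} applied to $S=K^1$. Since $K$ is completely simple it is simple, and as it is an ideal of $S=K^1$, any ideal of $S$ contained in $K$ is in particular an ideal of $K$ and hence equals $K$; thus $K$ is the completely simple minimal ideal of $S$ and the theorem applies. It then remains to verify that, for $S=K^1$, conditions (3a) and (3b) of Theorem \ref{thm:csmi} reduce to conditions (1) and (2) of the corollary.

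I would first treat the equivalence (3b) $\Leftrightarrow$ (1). Writing $K=\mathcal{M}[G;I,J;P]$, an $\ar$-class is a minimal right ideal $R=\{i_0\}\times G\times J$, and its $\eh$-classes are indexed by $J$. A one-line computation with the Rees multiplication shows that, in the right $S$-act $R/\eh$, the identity of $S$ fixes every class while each $s=(k,v,m)\in K$ sends every class to the single class indexed by the column $m$ of $s$; that is, the elements of $K$ act as constant maps. Given any finite $X\subseteq(R/\eh)\times(R/\eh)$, let $P_0\subseteq J$ be the finite set of points appearing in $X$. In an $X$-sequence the steps using an element of $K$ leave the current point unchanged, while the steps using the identity move between points of $P_0$; hence two \emph{distinct} points can be $X$-connected only if both lie in $P_0$. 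Consequently $\omega_{R/\eh}=\langle X\rangle$ forces $J=P_0$ to be finite, whereas if $J$ is finite then $R/\eh$ is finite and therefore pseudo-finite. This yields (3b) $\Leftrightarrow$ (1) (the computation being uniform in the choice of $\ar$-class $R$).

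Assuming (1), so that $K$ itself is a union of finitely many $\el$-classes, I would then establish (3a) $\Leftrightarrow$ (2). For (2) $\Rightarrow$ (3a) one simply takes $K_0=K$ as the required left ideal, whereupon (3a) becomes literally condition (2). For the reverse implication the difficulty is that Theorem \ref{thm:csmi}(3a) only provides \emph{some} left ideal $K_0$, a union of finitely many $\el$-classes, controlling the maximal subgroups $H_e$ with $e\in E(K_0)$, whereas (2) quantifies over all $e\in E(K)$. Two observations bridge this gap. First, for $e\in E(K_0)$ one has $V_{K_0}=\{fg: f,g\in E(K_0),\,f\,\ar\,e\,\el\,g\}\subseteq V_K$, so a finite $(F\cup V_{K_0})$-diameter gives a finite $(F\cup V_K)$-diameter, since enlarging a generating set cannot increase the diameter. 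Second, the property that $H_e$ has finite $(F\cup V)$-diameter for some finite $F$ does not depend on the choice of $e$: regarding $H_e$ as a right act over itself, this property is governed by the set of ratios $\{yx^{-1}: x,y\in F\cup V\}\subseteq G$, and a computation in the Rees coordinates of $K$ shows that the ratio set arising from $V_K(e)$ and that arising from $V_K(e')$ each lie in a bounded power of the other (both being controlled by the entries of the sandwich matrix $P$). Combining the two observations transfers the diameter bound from $E(K_0)$ to all of $E(K)$, giving (2).

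I expect this last uniformity statement to be the main obstacle: verifying that the finiteness of the $(F\cup V)$-diameter is independent of the idempotent $e$ requires the explicit Rees-coordinate computation, showing that the ratio sets determined by $V_K(e)$ and $V_K(e')$ generate the same elements of $G$ with mutually bounded word length. By contrast, the act-theoretic argument giving (3b) $\Leftrightarrow$ (1) is routine once the constant-map description of the action of $K$ on $R/\eh$ is recorded.
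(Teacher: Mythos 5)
Your treatment of the equivalence between Theorem~\ref{thm:csmi}(3b) and condition (1) is exactly the paper's argument: the paper records the same constant-map property of the action of $K$ on $R/\eh$ and deduces that the universal congruence on $R/\eh$ can be finitely generated only if $R/\eh$ is finite.

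The problem lies in the step you yourself flag as the main obstacle: transferring the finite $(F\cup V)$-diameter condition from the idempotents of the particular left ideal $K_0$ supplied by Theorem~\ref{thm:csmi}(3a) to \emph{all} idempotents of $K$. You assert that a computation in Rees coordinates shows the ratio sets attached to two different idempotents boundedly generate one another, but you do not carry this out, and it is not routine: for $e=(1,1_G,1)$ (with $P$ normalised) the set $V$ corresponds to the set of entries $\{p_{j,i}\}$, whereas for another idempotent $e'=(i_0,p_{j_0,i_0}^{-1},j_0)$ it corresponds to $\{p_{j_0,i_0}p_{j,i_0}^{-1}p_{j,i}p_{j_0,i}^{-1}\}$, and verifying that the ratios of the latter boundedly generate those of the former, uniformly and in the presence of the auxiliary finite sets $F$, is a genuine piece of work that your proposal leaves as an unproved assertion. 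This entire difficulty can be sidestepped: once condition (1) is in hand, $K$ itself is a left ideal of $K$ that is a union of finitely many $\el$-classes, and $K^1=S$ is right pseudo-finite by hypothesis, so condition (2a) of Theorem~\ref{thm:csmi} holds with the witness $K_0=K$; running the implication (2)$\Rightarrow$(3) of that theorem with this witness (its proof retains the given $K_0$) produces (3a) for $K_0=K$, which is literally condition (2) of the corollary for every $e\in E(K)$. No comparison between different idempotents is then required.
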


%{\color{blue}\em I still don't think it is obvious and can't see NR's conclusion. Although, the result is true: Given the hypothesis, we can fix $[k]_{\mathcal{H}}\in A$, then for any $[a]_{\mathcal{H}}\in A$ we have a sequence
%\[[a]_{\mathcal{H}}=[x_1]_{\mathcal{H}}t_1,\, [y_1]_{\mathcal{H}}t_1=[x_2]_{\mathcal{H}}t_2,\cdots, [y_n]_{\mathcal{H}}t_n=[k]_{\mathcal{H}}\]
%and we quickly see that $a\,\el\, x_i$ for the first $i$ such that
%$t_{i-1}\neq 1, t_i=1$, or if no $t_i=1$ then $a\,\el\, k$. Also below we would need $[x]_{\mathcal{H}}$}  
\begin{proof}
Notice that the action of a completely simple semigroup $K$ on $R/\eh$, where $R$ is an $\ar$-class, satisfies the following property: if $[x]_{\mathcal{H}}, [y]_{\mathcal{H}}\in R/\eh$, then for any $s\in K$ we have
$[x]_{\mathcal{H}}s=[y]_{\mathcal{H}}s$. 
%if $s\,\el\, t$ then $[x]s=[x]t$  for all $x\in R$.
It follows that every equivalence relation on the $S$-act $R/\eh$ is a congruence, or, in other words, the congruence generated by a set is the smallest equivalence relation containing that set.
Hence the full congruence is finitely generated only if $R/\eh$ is finite.
The rest of the proof is a direct application of Theorem \ref{thm:csmi}.
\end{proof}

%\begin{proof}
%If $S$ is right pseudo-finite (in fact, if $\omega_S$ is finitely generated), then $S$ has finitely many $\el$-classes by the left-right dual of \cite[Corollary 6.8]{Dandan:2019}.  Condition (2) holds by Theorem \ref{thm:csmi}.  The converse follows immediately from Theorem \ref{thm:csmi}.
%\end{proof}

\begin{Rem}
As discussed in Section \ref{sec:prelim}, every completely simple semigroup can be represented as a Rees matrix semigroup $K=\mathcal{M}[G;I, J;P]$, where $G$ is a group and $P$ is normal, meaning that $p_{1,i}=p_{j,1}=1_G$ for all $i\in I$, $j\in J,$ where $1\in I\cap J.$
In this representation, asssuming without loss of generality that $e=(1, 1_G, 1),$ the set $\{fg : f, g \in E(K), f\,\ar\, e\,\el\, g\}$ corresponds to the set of entries $\{p_{j,i}\:: \: i\in I,\  j\in J\}$; see the proof of \cite[Theorem 3.2.3]{Howie:1995}.
If this set of entries comprises the whole of $G,$ and if $J$ is finite, Corollary \ref{Cor:cs1} implies that $S=K^1$ is right pseudo-finite.
Specialising further, if we take an infinite group $G$, take $I$ such that $|I|=|G|,$ set $J=\{1,2\}$, and populate the second row of $P$ with all the elements of $G,$ we obtain a right pseudo-finite semigroup with a completely simple ideal that has infinite maximal subgroups.
\end{Rem}

In what follows we consider some conditions on right pseudo-finite monoids that imply the existence of a completely simple minimal ideal.

\cite[Proposition 5.3]{Dandan:2019} provides necessary and sufficient conditions for an inverse monoid to be right pseudo-finite. An immediate consequence is:

\begin{Prop}
\label{prop:inv}
An inverse monoid is right pseudo-finite if and only if it has a minimal ideal that is a finite group.
\end{Prop}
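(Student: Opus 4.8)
The plan is to handle the two directions separately, with the converse being elementary and the forward direction importing the existence of a minimal ideal from \cite{Dandan:2019}.

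For the converse, suppose the inverse monoid $S$ has a minimal ideal $K$ that is a finite group. Then $K$ is in particular a right ideal of $S$, and it possesses an identity, namely its group identity. Hence Corollary \ref{cor:monoidideal} applies and tells us that $S$ is right pseudo-finite if and only if $K$ is. Since $K$ is finite it is certainly right pseudo-finite, and therefore so is $S$.

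For the forward direction, suppose $S$ is right pseudo-finite. I would first record that inverse monoids are self-dual: the map $a\mapsto a^{-1}$ is an anti-automorphism of $S$, so $S\cong S^{\mathrm{op}}$, and consequently $S$ is right pseudo-finite if and only if it is left pseudo-finite. This is what allows the left-handed results of \cite{Dandan:2019} to be brought to bear. Applying \cite[Proposition 5.3]{Dandan:2019} (equivalently, the fact recorded in the introduction that a pseudo-finite inverse semigroup contains a completely simple minimal ideal), we obtain a completely simple minimal ideal $K$ of $S$. Being a subsemigroup of the inverse semigroup $S$, the ideal $K$ is itself inverse; so its idempotents simultaneously form a rectangular band (as $K$ is completely simple) and a semilattice (as $K$ is inverse). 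The only band that is both is trivial, so $K$ has a single idempotent, and a completely simple semigroup with a single idempotent is a group (in the Rees representation $\mathcal{M}[G;I,J;P]$ there are exactly $|I|\,|J|$ idempotents, forcing $|I|=|J|=1$ and hence $K\cong G$).

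It remains to see that the group $K$ is finite. As in the converse, $K$ is a right ideal of $S$ carrying an identity, so Corollary \ref{cor:monoidideal} shows that $K$ is right pseudo-finite. A group is left cancellative, hence weakly left cancellative, so Proposition \ref{prop:wlc} forces $K$ to be finite, completing the proof. I expect the only genuine obstacle to be the existence of the minimal ideal: this is precisely the deep input drawn from \cite{Dandan:2019}, whereas the remaining ingredients---self-duality of inverse monoids, the collapse of completely simple inverse semigroups to groups, and the finiteness of right pseudo-finite groups via Proposition \ref{prop:wlc}---are routine.
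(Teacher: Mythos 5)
Your argument is correct. Where the paper differs is that it offers no proof at all: it simply declares the proposition an immediate consequence of \cite[Proposition 5.3]{Dandan:2019}, which already contains the full characterisation of pseudo-finite inverse monoids; later the paper also observes that the result falls out of Theorem \ref{thm:rr}, since inverse monoids are right reversible and a minimal ideal of the form $L\times G$ with $L$ a left zero semigroup must have $L$ trivial when the idempotents commute. You instead import only the weaker fact that a pseudo-finite inverse monoid has a completely simple minimal ideal, and then reassemble the rest from the paper's own toolkit: self-duality of inverse monoids to reconcile left versus right pseudo-finiteness (a point the paper glosses over), the collapse of a completely simple inverse semigroup to a group, and Corollary \ref{cor:monoidideal} together with Proposition \ref{prop:wlc} to force finiteness of that group. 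This buys a more self-contained and transparent derivation at the cost of a few extra lines. One small imprecision: the idempotents of a completely simple semigroup do not in general form a band, so ``form a rectangular band (as $K$ is completely simple)'' needs the additional observation that $E(K)$ is closed under multiplication because $K$ is inverse (hence orthodox); alternatively, since every idempotent of a completely simple semigroup is primitive and $E(K)$ is a semilattice, $ef\leq e$ and $ef\leq f$ force $e=ef=f$, giving a single idempotent directly. Either repair is routine, and your conclusion that $K$ is a group, hence a finite group, stands.
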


A semigroup is said to be {\em completely regular} if it is a union of groups.  The class of completely regular semigroups includes completely simple semigroups, Clifford semigroups and bands.  By \cite[Corollary 8.3]{Dandan:2019} we have:

\begin{Prop}\label{prop:cr}
Every right pseudo-finite completely regular monoid has a completely simple minimal ideal.
\end{Prop}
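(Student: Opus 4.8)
The plan is to reduce the statement to an elementary fact about semilattices via the structure theory of completely regular semigroups. Recall that every completely regular semigroup $S$ is a semilattice $Y$ of completely simple semigroups: $S=\bigsqcup_{\alpha\in Y}S_\alpha$ with $S_\alpha S_\beta\subseteq S_{\alpha\beta}$, and the map $\pi\colon S\to Y$ sending $s\in S_\alpha$ to $\alpha$ is a surjective homomorphism onto the semilattice $Y$. Since $S$ is a monoid, its identity lies in some component $S_{\alpha_0}$, and a one-line check ($1\cdot x=x$ for $x\in S_\beta$ forces $\alpha_0\beta=\beta$) shows $\alpha_0$ is the top element of $Y$. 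First I would observe that, as $S$ is right pseudo-finite, its homomorphic image $Y$ is right pseudo-finite by Lemma~\ref{lem:quotient}; in particular $\omega_Y$ is finitely generated.

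The crux is the following claim about semilattices: if $Y$ is a semilattice with $\omega_Y=\langle X\rangle$ for some finite $X\subseteq Y$, then $Y$ has a least element. The hard part is only to guess the right candidate; I would take $c=\prod_{x\in X}x$, the meet of the finitely many elements occurring in $X$, so that $c\leq x$ for every $x\in X$. Given any $b\in Y$, finite generation yields an $X$-sequence $c=x_1s_1,\ y_1s_1=x_2s_2,\ \dots,\ y_ks_k=b$ by Lemma~\ref{lem:sequence}. Working in the natural order of the semilattice (where $uv\geq c$ whenever $u,v\geq c$, and $uv\leq v$ always), I would show by induction along the sequence that every term is $\geq c$: from $c=x_1s_1\leq s_1$ we get $s_1\geq c$, and since $y_1\geq c$ this gives $y_1s_1\geq c$; in general $x_{i+1}s_{i+1}=y_is_i\geq c$ forces $s_{i+1}\geq x_{i+1}s_{i+1}\geq c$, whence $y_{i+1}s_{i+1}\geq c$ (the case $s_i=1$ being immediate as the terms then lie in $X$). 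Thus $b=y_ks_k\geq c$, and since $b$ was arbitrary, $c=\min Y$.

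Finally I would transfer this back to $S$. Writing $\omega=\min Y$, we have $\alpha\omega=\omega$ for every $\alpha\in Y$, so $S_\omega=\pi^{-1}(\omega)$ absorbs products on both sides and is an ideal of $S$; it is completely simple by the decomposition. To see it is the minimal ideal, take any ideal $I$ and any $a\in I$, and choose $z\in S_\omega$: then $az\in I\cap S_\omega$, so $I\cap S_\omega$ is a nonempty ideal of the simple semigroup $S_\omega$, forcing $S_\omega\subseteq I$. Hence $S_\omega$ is the (necessarily unique) completely simple minimal ideal.

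The only substantive external input is the semilattice decomposition theorem for completely regular semigroups; everything else is bookkeeping, and the single genuine idea is the choice $c=\prod_{x\in X}x$ in the semilattice claim. I would emphasise that this argument uses only finite generation of $\omega_Y$, not the full force of right pseudo-finiteness, which explains why it runs parallel to the universal-congruence version \cite[Corollary 8.3]{Dandan:2019} and is left--right symmetric on the completely regular class.
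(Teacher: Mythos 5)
Your proposal is correct and follows essentially the same route as the paper, which derives this result from the semilattice decomposition of a completely regular semigroup together with the fact that a right pseudo-finite semilattice has a least element (the paper cites \cite[Proposition 5.3]{Dandan:2019} and \cite[Corollary 8.3]{Dandan:2019} for these steps rather than proving them). Your direct verification of the semilattice claim via $c=\prod_{x\in X}x$ and induction along an $X$-sequence is sound, and your observation that only finite generation of $\omega_Y$ is needed is accurate.
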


A regular semigroup is said to be {\em orthodox} if its idempotents form a subsemigroup.  
We note that all inverse semigroups are orthodox, but the converse is not true; also, orthodox semigroups need not be completely regular, and vice versa.

The proof of Theorem~\ref{thm:orthodox} below makes use of some techniques from classical semigroup theory.  We explain them as we come across them, with the exception of the following construction of a semilattice of subsemigroups, which we will also need in Section~\ref{sec:mi}.

\begin{Def}\label{defn:slss} A semigroup $S$ is a {\em semilattice $Y$ of subsemigroups $S_{\alpha},\alpha\in Y$}, if (i) $S_\alpha\cap S_{\beta}=\emptyset$ for all $\alpha\neq \beta\in Y$; (ii) $S=\bigcup_{\alpha\in Y}S_\alpha$; and (iii) $S_{\alpha}S_{\beta}\subseteq S_{\alpha\beta}$ for all $\alpha,\beta\in Y$. 
\end{Def}

Notice that if $S$ is a semilattice $Y$ of subsemigroups, then $Y$ is a homomorphic image of $S.$  Thus, if $S$ is right pseudo-finite, then by Lemma~\ref{lem:quotient} so is $Y,$ and hence $Y$ is forced to have a zero by \cite[Proposition 5.3]{Dandan:2019}.  It is worth remarking that a completely regular semigroup is a semilattice of completely simple semigroups, which together with the foregoing remark yields Proposition~\ref{prop:cr}.  For orthodox semigroups we must work a little harder.

\begin{Thm}\label{thm:orthodox} 
Let $S$ be an orthodox monoid.  Then the following are equivalent:
\begin{thmenumerate}
\item $S$ is right pseudo-finite;
\item $S$ has a completely simple minimal ideal $K$ whose (maximal) subgroups are finite, and the right $S$-act $R/\eh$ is pseudo-finite for any $\mathcal{R}$-class $R$ of $K.$
\end{thmenumerate}
\end{Thm}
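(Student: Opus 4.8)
We first dispose of the easy direction. For (2)$\Rightarrow$(1), since $K$ is a completely simple minimal ideal of $S$ whose maximal subgroups are finite, Remark~\ref{rem:csmi} tells us that $S$ is right pseudo-finite precisely when $R/\eh$ is pseudo-finite for every $\ar$-class $R$ of $K$, which is exactly the remaining hypothesis in~(2). The substance of the theorem is therefore the implication (1)$\Rightarrow$(2), and this splits into three tasks: (a) produce a completely simple minimal ideal $K$; (b) show its maximal subgroups are finite; and (c) show $R/\eh$ is pseudo-finite for each $\ar$-class $R$ of $K$. The plan is to settle (a) by hand using the orthodox structure, and then to read off (b) and (c) from Theorem~\ref{thm:csmi}.

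For (a) I would pass to the greatest inverse homomorphic image. Let $\gamma$ be the least inverse semigroup congruence on the orthodox monoid $S$, so that $T:=S/\gamma$ is an inverse monoid; the key classical fact I shall invoke is that $\gamma$ collapses exactly the rectangular-band components of the band $E(S)$, that is, $\gamma|_{E(S)}=\dee|_{E(S)}$ (see \cite{Howie:1995}). By Lemma~\ref{lem:quotient}, $T$ is right pseudo-finite, so by Proposition~\ref{prop:inv} its minimal ideal is a finite group $G_T$; in particular $G_T$ contains a single idempotent. Put $K:=\gamma^{-1}(G_T)$. As the preimage of an ideal under a surjective homomorphism, $K$ is an ideal of $S$, and being an ideal of a regular semigroup it is itself regular. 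Every idempotent of $K$ maps under $\gamma$ into $G_T$, hence onto the unique idempotent of $G_T$; so all idempotents of $K$ are $\gamma$-equivalent and, by the displayed fact, pairwise $\dee$-related. Thus $E(K)$ lies inside a single $\dee$-class of $E(S)$ and is therefore a rectangular band, in which every element is primitive. Hence every idempotent of $K$ is primitive, and by the standard characterisation of completely simple semigroups as the regular semigroups all of whose idempotents are primitive, $K$ is completely simple. Finally, a simple ideal is contained in every ideal of $S$ (intersect with an arbitrary ideal and apply simplicity), so $K$ is the minimal ideal of $S$.

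With $K$ in hand, (b) and (c) follow from Theorem~\ref{thm:csmi}. Since $S$ is right pseudo-finite with completely simple minimal ideal $K$, implication (1)$\Rightarrow$(3) of that theorem supplies a left ideal $K_0$ of $K$ that is a union of finitely many $\el$-classes, a finite set $F\subseteq G$, and the set $V=\{fg : f,g\in E(K_0),\ f\,\ar\,e\,\el\,g\}$, such that each maximal subgroup $G=H_e$ of $K_0$ has finite $(F\cup V)$-diameter, and moreover (condition (3b)) $R/\eh$ is pseudo-finite for every $\ar$-class $R$ of $K$; the latter is precisely~(c). For (b), orthodoxy forces $V$ to be trivial: if $f,g\in E(K)$ with $f\,\ar\,e\,\el\,g$, then $fg\in R_e\cap L_e=H_e$, while $fg$ is idempotent because $E(S)$ is a band, so $fg=e$, as $e$ is the only idempotent of the group $H_e$. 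Hence $V=\{e\}$, and finite $(F\cup\{e\})$-diameter simply says that $G$ is right pseudo-finite with the finite generating set $F\cup\{e\}$. A group is left cancellative, hence weakly left cancellative, so Proposition~\ref{prop:wlc} yields that $G$ is finite; as all maximal subgroups of the completely simple semigroup $K$ are isomorphic to $G$, they are all finite, establishing~(b).

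The main obstacle is step (a), and within it the single classical input $\gamma|_{E(S)}=\dee|_{E(S)}$, which is where orthodoxy is genuinely used: for a general regular monoid the least inverse congruence can merge idempotents from distinct $\dee$-classes, and then $E(K)$ need not be a rectangular band and the argument collapses. An alternative, more self-contained route to (a) is via the semilattice-of-subsemigroups machinery of Definition~\ref{defn:slss}: one would exhibit $S$ as a semilattice $Y$ of subsemigroups, deduce from right pseudo-finiteness (and Lemma~\ref{lem:quotient}) that $Y$ has a zero, and then argue that the orthodox structure renders the bottom component completely simple. The delicate point there is again to control the idempotents of the bottom component, so I expect the inverse-congruence argument above to be the cleaner one.
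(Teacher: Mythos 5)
Your proof is correct, but it reaches the completely simple minimal ideal and the finiteness of its subgroups by routes genuinely different from the paper's. For the existence of $K$, the paper also passes to the inverse quotient $S/\gamma$ to find a zero in the structure semilattice $Y$ of the band $E(S)$, but then works inside $E(S)$: it fixes an idempotent $e$ in the minimal rectangular band component $B_0$, shows by direct computation that $L_e$ and $R_e$ are minimal left and right ideals of $S$, and invokes Clifford's theorem that a semigroup with both kinds of minimal one-sided ideals has a completely simple minimal ideal. You instead pull back the whole minimal ideal $G_T$ of $S/\gamma$ and show the preimage is completely simple because its idempotents form a rectangular band; this is sound (the fact that a regular semigroup all of whose idempotents are primitive is completely simple is classical --- it is essentially \cite[Theorem 3.3.3]{Howie:1995} --- and in your setting one can even argue directly from the rectangular band identity that $K$ is simple), though you should note in passing that $E(K)$ is closed under multiplication, being $E(S)\cap K$ with $E(S)$ a band and $K$ an ideal, before calling it a rectangular band. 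For the finiteness of the maximal subgroups, the paper constructs the retraction $a\mapsto eae$ onto $H_e$, verifies it is a homomorphism using the rectangular band structure of $B_0$, and applies Lemma~\ref{lem:quotient} and Proposition~\ref{prop:wlc}; your observation that orthodoxy collapses the set $V$ of Theorem~\ref{thm:csmi}(3a) to $\{e\}$ (since $fg$ is an idempotent lying in the group $H_e$) is a slick alternative that extracts the same conclusion directly from the already-proved structure theorem, at the cost of routing through the heavier Theorem~\ref{thm:csmi} where the paper's argument for this step is self-contained. Both treatments of condition (c) and of the implication (2)$\Rightarrow$(1) via Remark~\ref{rem:csmi} coincide with the paper's.
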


\begin{proof}
(1)$\Rightarrow$(2).  Denoting by $B$ the band of idempotents $E(S)$ of $S$, we have that $B$ is a semilattice $Y$ of rectangular bands $B_{\alpha}, \alpha\in Y$ \cite[Theorem 4.4.1]{Howie:1995}.  Moreover, $S/\gamma$ is an inverse monoid, where $\gamma$ is the least inverse congruence on $S.$  It follows from \cite[(6.2.5)]{Howie:1995} that $E(S/\gamma)\cong Y.$  Now, $S/\gamma$ is right pseudo-finite by Lemma \ref{lem:quotient}, and hence, by \cite[Proposition 5.3]{Dandan:2019}, the semilattice $Y$ has a least element $0.$  It follows that the rectangular band $B_0$ is the minimal ideal {\em of $B$}.  Fix an idempotent $e$ in $B_0.$  We claim that the $\el$-class $L_e$ of $S$ is a minimal left ideal.  Clearly it suffices to prove that $L_e$ is a left ideal.  So, let $a\in L_e$ and $s\in S.$  Then $a=ae$ and hence $sa=sae.$  Let $f$ be an idempotent such that $sa\,\el\,f.$  Then it follows that $f=fe.$  Consequently, by the minimality of $B_0,$ we have $f\in B_0.$  From $f=fe$ and the fact that $B_0$ is a rectangular band, we obtain $f\,\el\,e.$  It follows by transitivity that $sa\in L_e,$ as required.  A dual argument proves that the $\ar$-class $R_e$ is a minimal right ideal of $S.$  Since $S$ has both a minimal left ideal and a minimal right ideal, it has a completely simple minimal ideal, say $K.$

%Fixing $e\in B_0$ and letting $K=D_e,$ we clearly have that $B_0\subseteq K$ (taken in $S$). We show that $K$ is an ideal of $S,$ and it follows that $K=J_e$ is the minimal ideal. Let $a\in K$ and let $c\in S.$  Then $a\,\ar\,b\,\el\,e$ for some $b\in K.$  We have $ca\,\ar\,cb$ and $cbe=cb.$  It follows that $fe=f,$ where $f\in B$ and $f\,\el\,cb,$ and consequently $f\in B_0$ by the minimality of 0.  From $fe=f$ and the fact $B_0$ is a rectangular band, we then obtain $f\,\el\,e,$ so that $cb\,\el\,e$ and hence $ca\in K$.  Dually, $ac\in K.$  Thus $K$ is the minimal ideal of $S.$  Since $K$ is simple and contains a primitive idempotent, it is completely simple.

To prove that the maximal subgroups of $K$ are finite, we consider the map 
$$\phi_e : S\rightarrow S, a\mapsto eae.$$  
Since $K$ is completely simple and the minimal ideal of $S,$ it is clear that the image of $\phi_e$ is $H_e.$  Let $a, b\in S.$  Since $S$ is regular, there exist idempotents $g, h\in E(S)$ such that $ea\,\el\,g$ and $h\,\ar\,be.$  But then $g, h\in B_0.$  Since $B_0$ is a rectangular band, we have $geh=gh.$  Consequently, we have
\begin{align*}
(ab)\phi_e&=e(ab)e= (ea)(be)=(eag)(hbe)=(ea)(gh)(be)\\
&=(ea)(geh)(be)=(eag)e(hbe)=(ea)e(be)=(eae)(ebe)\\
&=(a\phi_e)(b\phi_e),
\end{align*}
so that $\phi_e$ is a homomorphism.  Thus $H_e$ is right pseudo-finite by Lemma \ref{lem:quotient}, and hence $H_e$ is finite by Proposition \ref{prop:wlc}.

By Theorem~\ref{thm:csmi}, the right $S$-act $R/\eh$ is pseudo-finite for any $\mathcal{R}$-class $R$ of $K.$  

(2)$\Rightarrow$(1).  This follows from Remark~\ref{rem:csmi}. 
\end{proof}

\begin{Ex}\label{ex:orthodox}
In the case of orthodox monoids we cannot make inferences about finiteness of the minimal ideal, or of its constituent $\ar$- or $\el$-classes. Indeed, let $S$ be any infinite right pseudo-finite orthodox monoid, e.g.\ an infinite group with a zero adjoined.
Let $T$ be the extension by constants of $S$; see \cite[p.\ 155]{Grillet:1995}. 
Then $T=S\cup I$, where $I=\{c_u\::\: u\in S\}$ is a right zero semigoup (i.e.\ $c_uc_v=c_v$), and $sc_u=c_u$, $c_us=c_{us}$.
The semigroup $T$ can be viewed concretely as follows: for $s\in S$, let $\rho_s\::\: S^0\rightarrow S^0$ be the right translation by $s$, and let $\gamma_s:S^0\rightarrow S$ be the constant mapping with value $s$.
Then $T$ is isomorphic to the subsemigroup $\{\rho_s,\gamma_s\::\: s\in S\}$ of $T_{S^0}$. 
It is clear that $T$ is orthodox and that $I$ is a (completely simple) minimal ideal with infinitely many (trivial) $\el$-classes.
Since the action of $S$ on $I$ is pseudo-finite, it follows from Proposition \ref{prop:rightideals} that $T$ is right pseudo-finite.
Of course, we can extend $S$ by left constants, by embedding it into the subsemigroup 
$\{\lambda_s,\gamma_s\::\: s\in S\}$ of the dual transformation monoid $T_{S^0}^\ast$, where
$\lambda_s$ is the left translation by $S$.
Finally we may extend $S$ by the rectangular band of left and right constants, 
by embedding into the subsemigroup $\{(\lambda_s,\rho_s),(\gamma_s,\gamma_t)\::\: s,t\in S\}$ of the direct product $T_{S^0}^\ast\times T_{S^0}$; this last monoid is orthodox with infinitely many $\ar$- and $\el$-classes in the minimal ideal, and is both left and right pseudo-finite.
%
%Let $S$ be any orthodox monoid.  Let $I$ be a pseudo-finite left $S^1$-act and let $J$ be a pseudo-finite right $S^1$-act.  (For instance, we can take $I=J=S_0,$ with left and right actions given by left multiplication and right multiplication, respectively.  Then $I$ and $J$ are certainly finitely generated acts, and $\{0\}$ is trivially a pseudo-finite subact of both $I$ and $J,$ so $I$ and $J$ are pseudo-finite by Lemma \ref{lem:subact} and its dual.)  Let $G=\{e\}$ be the trivial group, and let $P$ be the $J\times I$ matrix in which all entries are $e.$  Recalling Construction \ref{con1}, let $M=\mathcal{E}(S, G; I, J; P).$  Clearly $M$ satisfies the conditions of Theorems \ref{thm:con1} and \ref{thm:con1,dual}, and is hence both right pseudo-finite and left pseudo-finite.  We note that the completely simple minimal ideal $K=\mathcal{M}(G; I, J; P)$ of $M$ is a rectangular band.  It is easy to see then that $E(M)=E(S)\cup K$ is a subsemigroup of $M,$ so $M$ is orthodox.  We further observe that if $S$ is a band then so is $M.$
\end{Ex}

We now turn our attention to the class of $\jay$-trivial monoids.  In what follows, a \emph{local zero} of an element $a$ in a semigroup $S$ is any idempotent $e\in E(S)$ such that $ae=ea=e.$

\begin{Lem}
Let $S$ be a $\jay$-trivial monoid and let $a\in S.$  An idempotent $e\in E(S)$ is a local zero of $a$ if and only if $e\,\leqj\,a.$
\end{Lem}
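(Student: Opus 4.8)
The forward implication is immediate and uses no hypothesis on $S$: if $e$ is a local zero of $a$, then $e=ae\in S^1aS^1$, so $e\leqj a$. All the content is in the converse, and my plan is to reduce it to $\mathcal{R}$- and $\mathcal{L}$-triviality. Since $\ar\subseteq\jay$ and $\el\subseteq\jay$, a $\jay$-trivial monoid is both $\ar$-trivial and $\el$-trivial, so I may freely use the rule $u\,\ar\,v\Rightarrow u=v$ and its dual.

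The workhorse will be a one-sided observation about idempotents. Suppose $f\in E(S)$ and $f\leq_{\ar}b$, say $f=bs$ with $s\in S^1$. Then idempotency gives $f=f^2=f(bs)=(fb)s$, so $f\leq_{\ar}fb$; as also $fb\leq_{\ar}f$, $\ar$-triviality forces $fb=f$. Dually, if $f\in E(S)$ and $f\leq_{\el}b$, then $bf=f$. These are trivial to establish, but they only apply when $e$ is comparable to $a$ on one side.

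The hard part is exactly that $e\leqj a$ yields only a two-sided factorisation $e=uav$ with $u,v\in S^1$, not $e\leq_{\ar}a$ or $e\leq_{\el}a$, so the one-sided observations cannot be applied to $a$ directly. The key step is a bootstrap that upgrades the two-sided comparison to a one-sided one. Writing $e=u(av)$ gives $e\leq_{\el}av$, so the dual observation yields $(av)e=e$; but then $e=a(ve)\in aS^1$, i.e.\ $e\leq_{\ar}a$, and now the $\ar$-observation applied to $a$ itself gives $ea=e$. Symmetrically, writing $e=(ua)v$ gives $e\leq_{\ar}ua$, whence $e(ua)=e$, so $e=(eu)a\in S^1a$ and $e\leq_{\el}a$; the $\el$-observation then gives $ae=e$. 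Combining the two conclusions yields $ae=ea=e$, so $e$ is a local zero of $a$, completing the proof.
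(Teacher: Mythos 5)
Your proof is correct and is essentially the paper's argument: both hinge on squaring the idempotent ($e=e^2$) and using $\ar$- and $\el$-triviality (consequences of $\jay$-triviality) to absorb the cofactors $u,v$ in a factorisation $e=uav$, yielding $ea=e$ and $ae=e$. The only difference is presentational --- you package the absorption step as a standalone observation and pass through the intermediate facts $e\leq_{\ar}a$ and $e\leq_{\el}a$, whereas the paper performs the same absorptions inline (writing $e=esat$, deducing $es=e$, then $ea=e$, and dually).
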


\begin{proof}
The forward implication is clear.  For the converse, we have that $e=sat$ for some $s, t\in S.$  Then $e=e^2=esat.$  Thus $e\,\ar\,es,$ so $e=es$ since $\ar\subseteq\jay$ and $S$ is $\jay$-trivial.  Then $e=eat,$ so $e\,\ar\,ea$ and hence $e=ea.$  A dual argument proves that $e=ae.$
\end{proof}

\begin{Thm}
\label{thm:J-trivial,lz}
Let $S$ be a $\jay$-trivial monoid in which every element has a local zero.  Then $S$ is right pseudo-finite if and only if it has a zero.
\end{Thm}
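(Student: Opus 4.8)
The plan is to prove the two implications separately; the forward direction is immediate and all the work lies in the converse. If $S$ has a zero, then $S$ is a monoid with a zero, and we have already recorded that every monoid with a zero is right pseudo-finite, so this direction needs no further argument.

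For the converse, assume $S$ is right pseudo-finite. Since $S$ is $\jay$-trivial, $\leqj$ is a partial order, and a quick unwinding of the definitions (together with the preceding Lemma) shows that an idempotent $z$ is a \emph{zero} of $S$ precisely when $z\leqj a$ for every $a\in S$, i.e.\ when $z$ is a local zero of every element. So the task reduces to constructing a single idempotent that is simultaneously a local zero of all of $S$. The first step is to show that any finite subset of $S$ has a \emph{common} local zero. Given $a,b\in S$ with local zeros $e,f$ (which exist by hypothesis), the product $ef$ satisfies $ef\leqj e\leqj a$ and $ef\leqj f\leqj b$; applying the standing hypothesis to the element $ef$ yields an idempotent $g$ that is a local zero of $ef$, and then $g\leqj ef\leqj a,b$, so $g$ is a common local zero of $a$ and $b$. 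Iterating gives a common local zero for any finite set.

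The second step applies this to a finite generating set: since $S$ is right pseudo-finite, $\omega_S$ is finitely generated, say $\omega_S=\langle X\rangle$ with $X\subseteq S$ finite. By the previous step there is an idempotent $z$ with $z\leqj x$ for all $x\in X$, which by the Lemma means $zx=xz=z$ for every $x\in X$. The third step propagates this absorbing property to all of $S$, using only the finite generation of $\omega_S$. For arbitrary $a\in S$, take an $X$-sequence from $a$ to $z$,
$$a=x_1s_1,\ y_1s_1=x_2s_2,\ \dots,\ y_ks_k=z,$$
and left-multiply throughout by $z$. Because $zx_i=zy_i=z$, every term collapses and we obtain $za=zs_1=zs_2=\cdots=zs_k$; moreover $zs_k=z y_ks_k=z\cdot z=z$, whence $za=z$. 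Thus $za=z$ for all $a\in S$. Finally, to get $az=z$ we invoke $\jay$-triviality: trivially $az\leqj z$, while $z=zz=(za)z=z(az)$ shows $z\leqj az$, so $az\,\jay\,z$ and hence $az=z$ by antisymmetry. Therefore $z$ is a two-sided zero of $S$.

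The main obstacle to anticipate is the construction of the idempotent $z$ in the infinite setting: one cannot pass to an idempotent power of $ef$ (such a power need not exist when $S$ is infinite), and the correct move is to feed $ef$ itself back into the hypothesis that \emph{every} element has a local zero. Once $z$ is in hand, the remaining subtlety is recognising that the collapsing argument of the third step requires nothing beyond finite generation of the universal right congruence, with the diameter bound playing no further role, and that $\jay$-triviality is exactly what upgrades the one-sided conclusion $za=z$ to a genuine zero.
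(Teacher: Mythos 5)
Your proof is correct, but it takes a genuinely different route from the paper's. The paper works along an $X$-sequence from $1$ to $a$, building a chain of idempotents $e_1=y_1^{\ast}$, $e_{i+1}=(y_{i+1}^{\ast}e_i)^{\ast}$ with $e_k\,\leqj\,a$; the terminal idempotent $e_k$ depends on the tuple $(y_1,\dots,y_k)$ of generators appearing in the sequence, and the diameter bound $k\leq n$ is what makes the set of all such idempotents finite, so that their product can serve as the zero. You instead first manufacture a single idempotent $z$ lying $\leqj$-below every generator (the key move being to feed $ef$ back into the hypothesis rather than trying to take an idempotent power, exactly as you note), and then propagate by left-multiplying an arbitrary $X$-sequence by $z$, which collapses term by term to give $za=z$; $\jay$-triviality then upgrades this to $az=z$. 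The notable difference is that your argument never uses the bound on sequence lengths: it establishes the stronger statement that a $\jay$-trivial monoid with local zeros and \emph{finitely generated} universal right congruence already has a zero, whereas the paper's construction genuinely needs the diameter to keep its candidate set $V$ finite. All the individual steps check out: $g\,\leqj\,ef\,\leqj\,a,b$ together with the preceding Lemma gives the common local zero; the collapsing computation $za=zs_1=\cdots=zs_k=z(y_ks_k)=z^2=z$ is valid even when some $s_i=1$; and $z=z(az)$ with $az\,\leqj\,z$ forces $az=z$ by antisymmetry of $\leqj$ on a $\jay$-trivial monoid.
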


\begin{proof}
The reverse implication follows immediately from Corollary \ref{cor:rightideal}.
For the direct implication, let $D_r(S)=n,$ and let $X\subseteq S$ be a finite generating set for $\omega_S$ such that $D_r(X, S)=n.$  For each $u\in S,$ choose a local zero $u^{\ast}$ of $u.$  Consider $a\in S.$  There exists an $X$-sequence
$$1=x_1s_1,\ y_1s_1=x_2s_2,\ \cdots,\ y_ks_k=a$$
where $k\leq n.$  Then $x_1, s_1\in J_1,$ so $x_1=s_1=1$ since $S$ is $\jay$-trivial.
Let $e_1=y_1^{\ast}.$  Then $e_1\,\leqj\,y_1=y_1s_1.$
Thus, if $k=1$ then $e_1\,\leqj\,a.$  Suppose that $k>1.$  
For each $i\in\{1, \dots, k-1\},$ let $e_{i+1}=(y_{i+1}^{\ast}e_i)^{\ast}.$
Let $i\in\{1, \dots, k-1\}$ and assume that $e_i\,\leqj\,y_is_i.$  Then 
$$s_{i+1}\,\geqj\,x_{i+1}s_{i+1}=y_is_i\,\geqj\,e_i,$$ so $s_{i+1}e_i=e_i.$
Then
$$y_{i+1}s_{i+1}\,\geqj\,y_{i+1}^{\ast}y_{i+1}s_{i+1}e_i=y_{i+1}^{\ast}e_i\,\geqj\,(y_{i+1}^{\ast}e_i)^{\ast}=e_{i+1}.$$
Hence, by finite induction, we have that $e_k\,\leqj\,a.$  Now, the element $e_k$ depends only on the elements $y_1, \dots, y_k\in X.$  Since $X$ is finite and $k\leq n,$ it follows that there exists a finite set $V=\{v_1, \dots, v_m\}\subseteq E(S)$ with the following property: for any $a\in S$ there exists $i\in\{1, \dots, m\}$ such that $v_i\,\leqj\,a.$  Setting $z=v_1\dots v_m,$ we have that $z\,\leqj\,v_i$ for each $i\in\{1, \dots, m\},$ and hence $z\,\leqj\,a$ for all $a\in S.$  Thus $z$ is a zero of $S.$
\end{proof}

A semigroup $S$ is said to be {\em periodic} if every monogenic subsemigroup of $S$ is finite (equivalently, for every $a\in S$ there exist $q, r\in\N$ such that $a^{q+r}=a^q$).

\begin{Cor}
\label{cor:periodic,J-trivial}
A periodic $\jay$-trivial monoid $S$ is right pseudo-finite if and only if it has a zero.
\end{Cor}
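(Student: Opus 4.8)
The plan is to deduce this from Theorem~\ref{thm:J-trivial,lz}. That result already characterises right pseudo-finiteness for $\jay$-trivial monoids in which every element has a local zero, so it suffices to show that a periodic $\jay$-trivial monoid automatically satisfies the local-zero hypothesis; once this is established, the corollary is immediate.

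To this end, fix $a\in S$. By periodicity the monogenic subsemigroup $\langle a\rangle$ is finite, so there are $q,r\in\N$ with $a^{q+r}=a^q$, and taking $m$ to be a multiple of $r$ with $m\geq q$ yields an idempotent $e=a^m$. The ``cyclic part'' $\{a^m,a^{m+1},\dots,a^{m+r-1}\}$ of $\langle a\rangle$ is then a finite cyclic group with identity $e$. The crux of the argument is to observe that this group is trivial. Indeed, any subgroup of $S$ lies within a single $\eh$-class: for an element $g$ of the group, with inverse $g^{-1}$ taken in the group (and hence in $S$), we have $g=ge\in S^1e$ and $e=g^{-1}g\in S^1g$, so $g\,\el\,e$, and dually $g=eg\in eS^1$ and $e=gg^{-1}\in gS^1$, so $g\,\ar\,e$, whence $g\,\eh\,e$. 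Since $\eh\subseteq\jay$ and $S$ is $\jay$-trivial, the whole group collapses to $\{e\}$, forcing $r=1$ and therefore $a^{m+1}=a^m$.

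It now follows directly that $e=a^m$ is a local zero of $a$. Using $a^{m+1}=a^m$ (which gives $a^{m+k}=a^m$ for all $k\geq 0$) we get $e^2=a^{2m}=a^m=e$, and $ae=ea=a^{m+1}=a^m=e$. As $a$ was arbitrary, every element of $S$ has a local zero, and applying Theorem~\ref{thm:J-trivial,lz} completes the proof. I expect the only delicate point to be this reduction, namely verifying that periodicity together with $\jay$-triviality forces each element to have an idempotent power that is fixed under multiplication by $a$; beyond collapsing the cyclic subgroup via $\jay$-triviality, everything is routine.
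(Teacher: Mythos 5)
Your proof is correct and takes essentially the same route as the paper: reduce to Theorem~\ref{thm:J-trivial,lz} by showing that periodicity together with $\jay$-triviality forces every element to have a local zero. The paper reaches the local zero slightly more directly --- from $a^{q+r}=a^q$ it notes at once that $a^q\,\jay\,a^{q+1}$ (each is a factor of the other), so $a^q=a^{q+1}$ by $\jay$-triviality and $a^q$ is the required local zero --- whereas you obtain the same collapse by observing that the kernel of $\langle a\rangle$ is a subgroup, hence contained in a single $\eh$-class and therefore trivial; both arguments are sound.
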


\begin{proof}
Let $a\in S$ be arbitrary.  Since $S$ is periodic, there exist $q, r\in\N$ such that $a^{q+r}=a^q.$  Then $a^q\,\jay\,a^{q+1},$ so $a^q=a^{q+1}$ as $S$ is $\jay$-trivial.  It follows that $a^q$ is a local zero of $a.$  The result now follows from Theorem \ref{thm:J-trivial,lz}.
\end{proof}

In the remainder of this section we consider a wide class of monoids that includes all commutative monoids, namely right reversible monoids.

\begin{Def}
A monoid $S$ is \emph{right reversible} if for any $a, b\in S$ there exist $u, v\in S$ such that $ua=vb$.
\end{Def}

In addition to commutative monoids, the class of right reversible monoids contains all inverse monoids, monoids in which the $\mathcal{L}$-classes form a chain, monoids with a left zero, and left orders in groups.

Before we proceed, we provide the following curious result, giving a sufficient condition for right reversibility in terms of a generating set for the full right congruence.

\begin{Prop}\label{prop:rr} 
Let $T$ be a right reversible submonoid of a monoid $S.$
If $\omega_S$ has a generating set $X\subseteq T,$ then $S$ is right reversible.  
\end{Prop}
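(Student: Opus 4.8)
The plan is to fix arbitrary $a,b\in S$ and produce $u,v\in S$ (in fact lying in $T$) with $ua=vb$, by taking an $X$-sequence from $a$ to $b$ and collapsing it one step at a time using the right reversibility of $T$. Since $\omega_S=\langle X\rangle$ with $X\subseteq T$, Lemma~\ref{lem:sequence} supplies a sequence
$$a=x_1s_1,\quad y_1s_1=x_2s_2,\quad \dots,\quad y_ns_n=b,$$
where each $x_i,y_i\in T$ and each $s_i\in S^1$. (If $a=b$ the sequence has length $0$ and $u=v=1$ works, so I may assume $n\geq 1$.) The decisive structural feature is that all the generator entries $x_i,y_i$ lie in the submonoid $T$, while the $s_i$, which the argument never multiplies on the left, are allowed to be arbitrary.

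I would then prove by induction on $i$ that there exist $p_i,q_i\in T$ with $p_ia=q_ix_is_i$. For $i=1$ this is immediate from $a=x_1s_1$, taking $p_1=q_1=1\in T$. For the inductive step, suppose $p_ia=q_ix_is_i$. Since $q_ix_i\in T$ (as $T$ is closed under products) and $y_i\in T$, right reversibility of $T$ yields $c,d\in T$ with $c(q_ix_i)=dy_i$; combining this with the merge equation $y_is_i=x_{i+1}s_{i+1}$ gives
$$(cp_i)a=cq_ix_is_i=dy_is_i=dx_{i+1}s_{i+1},$$
so $p_{i+1}=cp_i$ and $q_{i+1}=d$ complete the induction. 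Applying the same move once more to the final equation $y_ns_n=b$, i.e.\ choosing $e,f\in T$ with $e(q_nx_n)=fy_n$, produces
$$(ep_n)a=eq_nx_ns_n=fy_ns_n=fb,$$
so that $u=ep_n$ and $v=f$ lie in $T\subseteq S$ and satisfy $ua=vb$.

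The one point that requires genuine care, and the reason the hypothesis $X\subseteq T$ is exactly what makes the argument run, is that the accumulated left-hand coefficients must remain inside $T$ so that right reversibility of $T$ can be reapplied at every merge point. This is guaranteed precisely because each generator entry $x_i,y_i$ belongs to $T$ and $T$ is a submonoid: the coefficients $p_i,q_i$ and the auxiliary elements $c,d,e,f$ are all built from products of elements of $T$. The factors $s_i\in S^1$ ride passively on the right throughout, with associativity of the $S^1$-action on $S$ handling the bookkeeping; they never need to be moved or inverted. This establishes that $S$ is right reversible.
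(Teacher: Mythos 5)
Your proof is correct and follows essentially the same route as the paper's: both arguments run an induction along the $X$-sequence, maintaining an identity of the form $pa=q\,x_is_i$ with $p,q\in T$ and using right reversibility of $T$ at each merge point (possible precisely because all generator entries lie in $T$). The only cosmetic difference is that you phrase it as a forward induction on the index $i$ with an explicit invariant, whereas the paper inducts on the length of the sequence; the content is identical.
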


\begin{proof}  
Let $a, b\in S.$  Then there exists an $X$-sequence
$$a=x_1s_1,\ y_1s_1=x_2s_2,\ \cdots,\ y_ks_k=b$$
where $k\in\mathbb{N}.$  We prove by induction on $k$ that there exist $u, v\in T$ such that $ua=vb.$  Suppose first that $k=1.$  Since $x_1, y_1\in T$ and $T$ is right reversible, there exist $u, v\in T$ such that $ux_1=vy_1.$  Then $ua=vb.$  Now let $k>1$ and assume that there exist $w, z\in T$ such that $wa=z(x_ks_k).$  Since $zx_k, y_k\in T$ and $T$ is right reversible, there exist $u, v\in T$ such that $uzx_k=vy_k$ and hence $(uw)a=vb.$ 
\end{proof}

If $\omega_S$ is generated by a single pair of the form $(1, s),$ the assumptions of Proposition \ref{prop:rr} are satisfied with $T$ the submonoid generated by $s$, and hence $S$ is right reversible.

We now make a couple of technical definitions.

\begin{Def} Let $S$ be a monoid.  We say that a subset $V\subseteq S$ is an {\em absorbing set} for $S$ if for any $a\in S$ there exist $u, v\in V$ such that $ua=v.$
We say that $S$ is {\em finitely absorbed} if it has a {\em finite} absorbing set.
\end{Def}

\begin{Def}  We say that a monoid $S$ has {\em special right radius 2} if there exists a finite set $X\subseteq S$ such that for any $a\in S$ we have an $X$-sequence
\[a=x_1s_1,\, y_1s_1=x_2s_2,\, y_2s_2=1\]
where $x_1$ is right invertible.
\end{Def}

It is clear that any monoid with special right radius 2 is right pseudo-finite.
We remark that monoids with right diameter 1 and monoids with zero have special right radius 2.

%Do we have an example of a monoid with srr 2 but with rr not equal to 2?  Do we have an example of one that is also weakly right reversible.  Craig, will your construction help us out here??}

%\begin{Rem} 
%If $S$ has right diameter 1, then $S$ has special right radius 2.  To see this, suppose that $\omega_S$ is generated by a finite set $Y\subseteq S$ and let $X=Y\cup\{1\}.$  For any $a\in S$ there exists an $X$-sequence $a=xs,\, ys=1,$ and hence we have $$a=1a,\, 1a=xs,\, ys=1.$$
%Also, any monoid with a left zero has special right radius 2.
%\end{Rem}

We will later see how pseudo-finiteness interacts with right reversibility and the property of being finitely absorbed.  First, we show that the latter condition itself implies that the monoid is right pseudo-finite (indeed, it satisfies the stronger property of having special right radius 2). 

\begin{Def}
\label{def:wrr}
A monoid $S$ is \emph{weakly right reversible} if for any infinite sequence $$Sa_1, Sa_2,\dots$$ of principal left ideals of $S,$ there exist $i, j\in\mathbb{N}$ with $i<j$ such that $Sa_i\cap Sa_j\neq \emptyset$.
\end{Def}

\begin{Prop}\label{prop:condition(A)}
The following are equivalent for a monoid $S$:
\begin{thmenumerate}
\item $S$ is finitely absorbed;
\item $S$ has a completely simple minimal ideal with finite $\mathcal{R}$-classes;
\item $S$ is weakly right reversible and has special right radius 2.
\end{thmenumerate}
\end{Prop}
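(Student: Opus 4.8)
The plan is to prove the cycle (2)$\Rightarrow$(1)$\Rightarrow$(3)$\Rightarrow$(2); the first two implications are short and explicit, and essentially all the difficulty is concentrated in (3)$\Rightarrow$(2). For (2)$\Rightarrow$(1) I would exhibit a finite absorbing set directly. Fix an idempotent $e$ in the completely simple minimal ideal $K$. Since $K$ is an ideal and $e^2=e$, for any $a\in S$ we have $ea\in K$ and $e(ea)=ea$, so $ea\in eK$; writing $K=\mathcal{M}[G;I,J;P]$ in normal form with $e$ in coordinate position $(i_0,1_G,j_0)$ shows $eK=\{i_0\}\times G\times J=R_e$, the $\ar$-class of $e$. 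By hypothesis the $\ar$-classes of $K$ are finite, so $R_e$ is finite. Taking $V=R_e$ we have $e\in V$ and $ea\in V$ for every $a\in S$, whence $V$ is a finite absorbing set and $S$ is finitely absorbed.

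For (1)$\Rightarrow$(3), let $V$ be a finite absorbing set. Weak right reversibility is pure pigeonhole: given principal left ideals $Sa_1,Sa_2,\dots$, choose for each $k$ elements $u_k,v_k\in V$ with $u_ka_k=v_k$; since $V$ is finite some value $v\in V$ occurs as $v_i=v_j$ with $i<j$, and then $v\in Sa_i\cap Sa_j$. For special right radius $2$, take the finite set $X=\{1\}\cup V$. Given $a\in S$, pick $u,v\in V$ with $ua=v$; then $a=1\cdot a$, $u\cdot a=v=v\cdot 1$, $1\cdot 1=1$ is an $X$-sequence of the required shape $a=x_1s_1,\ y_1s_1=x_2s_2,\ y_2s_2=1$ (with $x_1=1$, $y_1=u$, $x_2=v$, $y_2=1$, $s_1=a$, $s_2=1$), and $x_1=1$ is right invertible.

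The heart of the argument is (3)$\Rightarrow$(2). Here I would first use special right radius $2$ to manufacture idempotents and to locate a kernel: for each $a$ the final relation $y_2s_2=1$ (with $y_2\in X$) shows that $s_2$ is left invertible, so $e:=s_2y_2$ is idempotent, $S^1s_2=S$, and the right ideal $eS^1=s_2S^1$ may be proper. Exploiting the finitely many possible $y_2\in X$ together with weak right reversibility, I would show that $S$ has a minimal right ideal, so that a simple kernel $K$ exists, and then that weak right reversibility rules out Baer--Levi-type behaviour inside $K$ and forces a minimal left ideal as well; by \cite[Theorem 3.2]{Clifford:1948} this makes $K$ completely simple. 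Finiteness of the $\ar$-classes would then split into two parts: weak right reversibility bounds the number of (pairwise disjoint, principal) minimal left ideals, giving finitely many $\el$-classes; and the finite set $X$, by expressing each element of a maximal subgroup $H_e$ as a bounded product of elements of $X$ and their one-sided inverses (exactly as in the computation underlying Proposition \ref{prop:wlc}), forces $H_e$ to be finite. Together these give finite $\ar$-classes.

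The main obstacle is precisely the step establishing complete simplicity, i.e.\ producing a primitive idempotent (equivalently, minimal one-sided ideals of both kinds) from the mixed one-sided data in (3). The right-invertible first step $x_1$ and the left-invertible witness $s_2$ are ``group-like'' and by themselves do not shrink any ideal (indeed the local monoid $eSe$ is isomorphic to $S$), so all the localisation must come from the finiteness encoded in weak right reversibility; controlling this interaction is the delicate point. By contrast, once $K$ is known to be completely simple, the finiteness bounds on the group $G$ and on the number of $\el$-classes are routine, and (2)$\Rightarrow$(1) then closes the cycle.
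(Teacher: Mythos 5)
Your implications (2)$\Rightarrow$(1) and (1)$\Rightarrow$(3) are correct and cleanly argued: taking $V=R_e$ for an idempotent $e$ in the kernel gives a finite absorbing set, and the pigeonhole/absorbing-set arguments for (3) are fine. The problem is that your cycle concentrates all the work in (3)$\Rightarrow$(2), and for that implication you have only a plan, not a proof. The two load-bearing claims --- ``exploiting the finitely many possible $y_2\in X$ together with weak right reversibility, I would show that $S$ has a minimal right ideal'' and that weak right reversibility then ``forces a minimal left ideal as well'' --- are exactly the hard content, and no mechanism is offered for either. Your own observation that $eSe\cong S$ for the idempotent $e=s_2y_2$ shows why this is not going to fall out of the radius-2 data alone. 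Moreover, the proposed finiteness argument for the maximal subgroup $H_e$ (``exactly as in the computation underlying Proposition~\ref{prop:wlc}'') does not transfer: that computation needs the sets $[c:d]$ to be finite, i.e.\ weak left cancellativity, which $S$ need not satisfy; the elements $s_1,s_2$ appearing in a radius-2 sequence range over all of $S$, so nothing bounds the products a priori.

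The paper closes the cycle the other way, proving (3)$\Rightarrow$(1) directly, and this is where the real trick lives: given the sequence $a=x_1s_1,\ y_1s_1=x_2s_2,\ y_2s_2=1$ with $x_1t=1$ and $y_2s_2=1$, one applies weak right reversibility to the sequence of principal left ideals $Sx_2y_2, Sx_2y_2^2,\dots$ to obtain $ux_2y_2^{m+n}=vx_2y_2^m$, then (having fixed $v$) to $Svy_1x_1, Svy_1x_1^2,\dots$ to obtain $pvy_1x_1^{i+j}=qvy_1x_1^i$, and a direct computation yields $(pvy_1x_1^{j-1})a=qux_2y_2^{n-1}$. Since $u,v,p,q,j,n$ depend only on the tuple $(x_1,y_1,x_2,y_2)\in X^4$, this produces a finite absorbing set. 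The extraction of a completely simple kernel with finite $\ar$-classes then happens in (1)$\Rightarrow$(2): the absorbing set gives elements minimal under $\leqel$, hence minimal left ideals; a finiteness/periodicity argument ($u_iw^i=v_i$ with repeated pairs forces $w=w^{j+1}$) produces an idempotent, so the kernel is completely simple; and the maximal subgroup is exhibited as $\{(eue)^{-1}eve : u,v\in V\}$, which is visibly finite. You would need to supply arguments of this kind (or reroute through (3)$\Rightarrow$(1)) before your proof is complete.
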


\begin{proof} 
(1)$\Rightarrow$(2).  Let $V$ be a finite absorbing set of $S.$  Observe that, for $u, v\in V,$ if $ua=v$ then $v\leq_{\mathcal{L}}a.$  Since the set $V$ is finite, we deduce that $S$ has minimal left ideals; the union of these minimal left ideals is the minimal ideal of $S,$ say $K.$  Choose $w\in S$ such that $L_w$ is a minimal left ideal of $S.$
Now, we have that $w\,\el\,w^i$ for all $i\in \N.$ Since $V$ is absorbing,  for each $i\in \N$ there exists $(u_i, v_i)\in V$ such that $u_iw^i=v_i.$  But since $V$ is finite, we must have
$(u_i, v_i)=(u_{i+j}, v_{i+j})$ for some $i, j\geq 1$.  Letting $u=u_i=u_{i+j}$ and $v=v_i=v_{i+j},$ we have that
\[vw^j=uw^iw^j=uw^{i+j}=v.\]
Since $v\leq_{\mathcal{L}}w,$ by the choice of $w$ we have that $w\,\el\,v,$ so that $w=tv$ for some $t\in S$.  It follows that $w=tvw^j=w^{j+1},$ so $w$ is periodic and hence $K$ contains an idempotent.  Thus $K$ is completely simple.  For any $a\in K,$ there exist $u, v\in V$ such that $ua=v,$ so $(wu)a=wv.$  It follows that every element of $K$ is $\el$-related to some $wv,$ where $(u, v)\in V$ for some $u\in S.$  Hence, $K$ has finitely many $\el$-classes.  Now consider a maximal subgroup $G=H_e$ of $K.$  For any $g\in G,$ there exist $u, v\in V$ such that $ug=v.$  Now, $eue,eve\in G$ and so  $g=(eue)^{-1}eve.$  Thus $G=\{(eue)^{-1}eve : u, v\in V\}.$  Since $V$ is finite, we conclude that $G$ is finite.  It follows that the $\ar$-classes of $K$ are finite.
%Conversely, if $S$ has a completely simple minimal ideal with finite $\mathcal{R}$-classes, let $R_x$ be any $\ar$-class of $S.$  For any $a\in S,$ we have $xa\in R_x.$  Thus $R_x$ is a finite absorbing set of $S$.

(2)$\Rightarrow$(3).  Certainly $S$ is weakly right reversible.  Let $R_e$ be an $\ar$-class in the completely simple minimal ideal, and let $X$ denote the finite set $\{1\}\cup R_e$.  For any $a, b\in S,$ we have an $X$-sequence
$$a=1a,\, ea=(ea)1,\, (1)1=1.$$
Thus $S$ has special right radius 2.

(3)$\Rightarrow$(1).  
By assumption $S$ has special right radius $2$, so let $X\subseteq S$ be a finite set witnessing this property.
Consider an arbitrary $a\in S$.
Then there is
an $X$-sequence
\begin{equation}
\label{eq:ssr2}
a=x_1s_1,\, y_1s_1=x_2s_2,\, y_2s_2=1
\end{equation}
where $x_1$ is right invertible, say with $x_1t=1$.  
Also, clearly, $y_2$ is right invertible, with right inverse $s_2$.
Considering the sequence of principal right ideals $Sx_2y_2,Sx_2y_2^2,\dots$ and using the fact that $S$ is weakly right reversible, there exist $u, v\in S$ and $m, n\in\mathbb{N}$ such that
$$ux_2y_2^{m+n}=vx_2y_2^m.$$
Similarly, having chosen $v$, there exist $p, q\in S$ and $i, j\in\mathbb{N}$ such that
$$pvy_1x_1^{i+j}=qvy_1x_1^i.$$
Then
\begin{align*}
(pvy_1x_1^{j-1})a&=pvy_1x_1^js_1=pvy_1x_1^{i+j}t^is_1=qvy_1x_1^it^is_1=qvy_1s_1=qvx_2s_2\\
&=qvx_2y_2^ms_2^{m+1}=qux_2y_2^{m+n}s_2^{m+1}=qux_2y_2^{n-1}.
\end{align*}

Hence the set of all elements $pvy_1x_1^{j-1}$, $qux_2y_2^{n-1}$, as $a$ runs through $S$, is an absorbing set for $S$.
In fact this set can be chosen to be finite, by noticing that the choices of individual factors $u$, $v$, $p$, $q$ and exponents $j$ and $n$, featuring in the above elements, which we have made along the way, depend only
upon the sequence $(x_1,y_1,x_2,y_2)$ appearing in \eqref{eq:ssr2}.
There are only finitely many such sequences because $X$ is finite.
Therefore $S$ is finitely absorbed, as required.
\end{proof}

The following result provides several equivalent characterisations for a monoid to be both right pseudo-finite and right reversible.

\begin{Thm}\label{thm:rr} 
The following are equivalent for a monoid $S$:
\begin{thmenumerate}  
\item $S$ is right pseudo-finite and right reversible; 
\item $S$ is finitely absorbed and right reversible;
\item $S$ has a minimal ideal of the form $L\times G$ where $L$ is a left zero semigroup and $G$ is a finite group;
\item $S$ is right pseudo-finite and has a single minimal left ideal.
\end{thmenumerate}
\end{Thm}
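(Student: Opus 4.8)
The plan is to prove the cycle of implications $(1)\Rightarrow(2)\Rightarrow(3)\Rightarrow(4)\Rightarrow(1)$. Three of these arrows are essentially bookkeeping on top of Proposition~\ref{prop:condition(A)} and the Rees--Suschkewitsch description of completely simple semigroups recalled in Section~\ref{sec:prelim}; the remaining arrow, $(1)\Rightarrow(2)$, is where the real work lies. Throughout I will use the elementary observation that a right reversible monoid has \emph{at most one} minimal left ideal: if $L_1\neq L_2$ were distinct (hence disjoint) minimal left ideals, with $a\in L_1$ and $b\in L_2$, then any witness $ua=vb$ to right reversibility would lie in $Sa\cap Sb\subseteq L_1\cap L_2=\emptyset$, a contradiction.

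For $(2)\Rightarrow(3)$ I would feed finite absorption into Proposition~\ref{prop:condition(A)} to obtain a completely simple minimal ideal $K$ with finite $\ar$-classes, and write $K=\mathcal{M}[G;I,J;P]$. Finiteness of the $\ar$-classes forces both $G$ and $J$ to be finite, while right reversibility, via the observation above applied to the minimal left ideals $I\times G\times\{j\}$ of $K$, forces $|J|=1$; hence $K\cong I\times G$ with $I$ a left zero semigroup and $G$ a finite group. For $(3)\Rightarrow(4)$, note that $L\times G$ is left simple, so it is the unique minimal left ideal of $S$; and since its $\ar$-classes $\{l\}\times G$ are finite, Proposition~\ref{prop:condition(A)} returns finite absorption, whence special right radius $2$ and right pseudo-finiteness. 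For $(4)\Rightarrow(1)$, the presence of a unique minimal left ideal $L$ means, by the results of Clifford recalled in Section~\ref{sec:prelim}, that $L$ is the minimal ideal $K$ and is left simple; given $a,b\in S$, choosing any $c\in K$ puts $ca,cb$ in $K$, and left simplicity yields $p\in K^1$ with $cb=p(ca)=(pc)a$, so that $Sa\cap Sb\neq\emptyset$ and $S$ is right reversible (right pseudo-finiteness being part of the hypothesis).

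The crux is $(1)\Rightarrow(2)$: a right pseudo-finite, right reversible monoid is finitely absorbed. Since right reversibility trivially implies weak right reversibility (take the first two terms of any sequence), Proposition~\ref{prop:condition(A)} reduces this to showing that $S$ has special right radius $2$. I would fix a finite $X\subseteq S$ with $1\in X$, $\omega_S=\langle X\times X\rangle$ and $D_r(X,S)=n$, and use iterated right reversibility over the finite set $X$ to produce a single element $c$ with $c\leqel x$ for every $x\in X$. Multiplying any $X$-sequence on the left by $c$ then converts the generic multipliers into elements below $c$, and the two equations of such a sequence that anchor it to $1$ (one of which exhibits a right invertible generator) are the raw material for a length-$2$ sequence.

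The main obstacle is precisely the finiteness in this last step. The consequence of pseudo-finiteness that is cheap to extract is finite generation as a right ideal (Lemma~\ref{lem:pffg}), but this controls only the preorder $\leq_{\ar}$, whereas minimal left ideals, right reversibility and the element $c$ all live on the $\leqel$ side, and the two interact badly under right multiplication, so naive compression keeps producing new, non-uniform elements. Concretely, the statement to be proved is equivalent to the assertion that the downward-directed family $\{S^1a : a\in S\}$ of principal left ideals has a least member, i.e.\ that $N=\bigcap_{a\in S}S^1a\neq\emptyset$: for a right reversible monoid one checks that a minimal left ideal exists if and only if $N\neq\emptyset$, in which case $N$ is that ideal. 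Turning the \emph{diameter} bound — rather than mere finite generation — into such a compactness statement for $\leqel$ is the delicate point. Once it is in hand, finiteness of the maximal subgroup follows from Proposition~\ref{prop:wlc} applied to a suitable homomorphic image onto the maximal subgroup (as in the orthodox case), and Proposition~\ref{prop:condition(A)} packages everything as finite absorption.
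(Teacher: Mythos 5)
Your cycle $(2)\Rightarrow(3)\Rightarrow(4)\Rightarrow(1)$ is sound and close to the paper's: the paper also routes $(2)\Rightarrow(3)$ through Proposition~\ref{prop:condition(A)} (showing directly that right reversibility collapses the completely simple minimal ideal to a single $\el$-class), proves $(3)\Rightarrow(4)$ by observing that $\{e\}\times G$ is a finite right ideal and invoking Proposition~\ref{prop:rightideals} (your detour back through finite absorption also works), and proves $(4)\Rightarrow(1)$ exactly as you do. The problem is $(1)\Rightarrow(2)$, which you correctly identify as the crux and then explicitly leave unfinished: you reduce it to a ``compactness statement for $\leqel$'' (that $\bigcap_{a\in S}S^1a\neq\emptyset$) and say that extracting this from the diameter bound is ``the delicate point'' still to be supplied. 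That is a genuine gap, and the reduction you propose is not the route that closes it.

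The paper's argument for $(1)\Rightarrow(2)$ is elementary and direct, and the idea you are missing is this: the left multipliers produced by iterated right reversibility along an $X$-sequence can be chosen to depend only on the sequence of \emph{generators} $(x_1,y_1,\dots,x_k,y_k)$, not on the elements $s_i$. For each such tuple of length $k\leq n=D(X,S)$ one fixes, once and for all, elements $u_1,v_1,\dots,u_k,v_k$ with $u_1x_1=v_1y_1$ and $u_{i+1}v_ix_{i+1}=v_{i+1}y_{i+1}$; note that each application of right reversibility is to a pair built solely from generators and previously chosen multipliers. Given $a\in S$ and an $X$-sequence $a=x_1s_1,\ y_1s_1=x_2s_2,\ \dots,\ y_ks_k=1$, an easy induction gives $u_i\cdots u_1a=v_iy_is_i$, whence $u_k\cdots u_1a=v_ky_ks_k=v_k$. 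Since there are only finitely many generator tuples of length at most $2n$, the set $V=\{u_k\cdots u_1,\ v_k\}$ ranging over all of them is finite and absorbing. No appeal to special right radius $2$, to weak right reversibility, or to any infinitary intersection of principal left ideals is needed; the finiteness comes entirely from the bound on the \emph{length} of the sequences, not from any order-theoretic compactness of $\leqel$. Your proposed element $c$ with $c\leqel x$ for all $x\in X$ does exist (iterate right reversibility over the finite set $X$), but multiplying a sequence on the left by a single fixed $c$ does not terminate the compression, which is presumably why your attempt ``keeps producing new, non-uniform elements'': the correct bookkeeping is per generator tuple, not per element of $S$.
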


\begin{proof}
(1)$\Rightarrow$(2).  Suppose that $\omega_S$ is generated by a finite set $X\subseteq S,$ and let $n=D(X, S).$
Let $\mathcal{S}$ be the set of sequences of elements of $X$ of even length no greater than $2n.$  Consider a sequence \[(x_1, y_1, x_2, y_2, \hdots, x_k, y_k)\] in $\mathcal{S}.$  Since $S$ is right reversible, there exist $u_1, v_1,\hdots, u_k, v_k\in S$ such that 
\[u_1x_1=v_1y_1,\, u_2v_1x_2=v_2y_2,\hdots, \, u_kv_{k-1}x_k=v_ky_k.\]
Let \[V=\{u_k\hdots u_2u_1, v_k: (x_1, y_1, \hdots, x_k, y_k)\in \mathcal{S}\};\]
notice that $V$ is finite.  We claim that $V$ is an absorbing set of $S$.  Indeed, let $a\in S.$  Then there exists an $X$-sequence
\[a=x_1s_1, \, y_1s_1=x_2s_2,\hdots, y_ks_k=1\]
where $k\leq n.$  Let the elements $u_1, v_1, \hdots, u_k, v_k$ be chosen as above. 
Then \[u_1a=u_1x_1s_1=v_1y_1s_1.\]
Suppose for induction that \[u_i\hdots u_2u_1a=v_iy_is_i.\]
Then 
\[u_{i+1}u_i\hdots u_2u_1a=u_{i+1}v_iy_is_i=u_{i+1}v_ix_{i+1}s_{i+1}=v_{i+1}y_{i+1}s_{i+1},\]
so that in a finite number of steps
\[u_k\hdots u_2u_1a=v_ky_ks_k=v_k,\]
as required.\par
(2)$\Rightarrow$(3).  By Proposition \ref{prop:condition(A)}, $S$ has a completely simple minimal ideal $K$ with finite $\mathcal{R}$-classes.  Let $w\in K$ be arbitrary.  By right reversibility, for any $p\in K$ the exist $x, y\in S$ such that $xp=yw.$  But then $p\,\el\,xp=yw\,\el\,w.$  Thus $K=L_w.$  We conclude that $K\cong L\times G$ where $L$ is a left zero semigroup and $G$ is a finite group.\par
(3)$\Rightarrow$(4).  For any $e\in L,$ the set $\{e\}\times G\cong G$ is a finite right ideal of $S.$  Therefore, by Proposition \ref{prop:rightideals}, $S$ is right pseudo-finite.  It is obvious that $L\times G$ is a minimal left ideal.\par
(4)$\Rightarrow$(1).  Let $L$ be the minimal left ideal of $S.$  Then $L$ is the minimal ideal of $S.$  Let $a, b\in S.$  Picking any $u\in L,$ we have that $ua, ub\in L.$  Since $ua\,\el\,ub,$ there exists $v\in L$ such that $vua=ub.$  Thus $S$ is right reversible.
\end{proof}

\begin{Cor}
Let $M$ be a commutative monoid.  Then $M$ is right pseudo-finite if and only it it has a minimal ideal that is a finite group.
\end{Cor}

Proposition \ref{prop:inv} is also an immediate corollary of Theorem \ref{thm:rr}.

%\begin{Rem}
%Let $S$ be any monoid with a minimal ideal $L$ that is a left zero zemigroup.  Fix any $u\in L.$  Letting $X=\{(1, u)\},$ we have that $D(X, S)=2$ and hence $S$ is right pseudo-finite.  Indeed, for any $s\in S$ we have an $X$-sequence
%$$s=1s, us=u,$$
%so any two elements of $S$ are connected by an $X$-sequence of length 2.
%\end{Rem}
%
%

%By Theorem \ref{thm:rr}, if a right pseudo-finite monoid has a single minimal left ideal, then that minimal (left) ideal is completely simple.  On the other hand, it is possible for a right pseudo-finite monoid to have a minimal ideal that consists of precisely two minimal left ideals and is not completely simple, as demonstrated by the following example.
%
%\begin{Ex}
%Let $T$ be any left simple semigroup that is not completely simple, and let $M=\mathcal{M}(T; I, J; P)^1,$ where $I,$ $J$ and $P$ are as given in the statement of Corollary \ref{cor:con1}.  By Corollary \ref{cor:con1}, $M$ is right pseudo-finite.  Recalling that $J=\{j_0, j_1\},$ we shall prove that $M$ has precisely two minimal left ideals, namely $I\times T\times\{j_0\}$ and $I\times T\times\{j_1\},$ and that both these minimal are not completely simple.
%\end{Ex}

\section{Minimal Ideals}\label{sec:mi}

In this section we show that if either of the partial orders $\leqel$ or $\leqj$ is left compatible with multiplication in a right pseudo-finite monoid $S$ then $S$ must have a minimal ideal.  Each of these two results is derived as a corollary of more technical necessary and sufficient conditions for a right pseudo-finite monoid to have a minimal left ideal or a minimal two-sided ideal.
 For clarity, in what follows the relation $\leqel$ or $\leqj$ always denotes the relation $\leqel$ or $\leq_{\mathcal{J}}$ in the parent semigroup.  The relation $\leqel$ is always right compatible with multiplication, but need not be left compatible; the relation $\leqj$ need not be either.

%The purpose of this section is to find necessary and sufficient conditions both for a pseudo-finite monoid to have a minimal left ideal and for a pseudo-finite monoid to have a minimal ideal.  The first result concerns minimal left ideals.
%

\begin{Thm}\label{thm:minleftideal} 
Let $S$ be a right pseudo-finite monoid.  The following are equivalent:
\begin{thmenumerate}
\item $S$ has a minimal left ideal;
\item $S$ has a left ideal $I$ such that $\leqel\cap(I\times I)$
is left compatible with multiplication in $S$;
\item for each $a\in S$ there exists $k=k_a\in S$ such that for any $u, v\in S,$
\[u\,\leqel\,v \mbox{ implies that }auk\,\leqel\,avk;\]
\item $\omega_S$ has a finite generating set $X\subseteq S,$ and for each $x\in X$ there exists $k=k_x\in S$ such that for any $u, v\in S,$
 \[u\,\leqel\,v \mbox{ implies that }xuk\,\leqel\,xvk.\]
\end{thmenumerate}
\end{Thm}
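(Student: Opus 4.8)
The plan is to prove the cycle (1)$\Rightarrow$(2)$\Rightarrow$(3)$\Rightarrow$(4)$\Rightarrow$(1), with the first three implications being soft and essentially formal, and the last one carrying all of the pseudo-finiteness. For (1)$\Rightarrow$(2) I would take $I$ to be a minimal left ideal $L$. Minimality gives $Sx=L$ for every $x\in L$, so $L$ is a single $\el$-class; hence if $u,v\in L$ and $w\in S$ then $wu,wv\in SL\subseteq L$ are $\el$-related, so $wu\leqel wv$, which is exactly left compatibility of $\leqel\cap(L\times L)$. For (2)$\Rightarrow$(3) I would fix any $c\in I$ and put $k_a=c$ for all $a$: if $u\leqel v$ then $uc\leqel vc$ by right compatibility of $\leqel$, and $uc,vc\in SI\subseteq I$, so left compatibility applied with $w=a$ yields $auc\leqel avc$ (note $k_a$ may even be chosen independently of $a$). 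Finally (3)$\Rightarrow$(4) is immediate: since $S$ is right pseudo-finite, $\omega_S$ has a finite generating set $X\subseteq S$, and one simply restricts the quantifier in (3) to $X$.

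The real content is (4)$\Rightarrow$(1). The engine I would build first is a \emph{transport lemma}. Writing $G$ for the submonoid of $S$ generated by $X$, a short induction (using $u\leqel v\Rightarrow xuk_x\leqel xvk_x$ for the generators $x\in X$) shows that every $g\in G$ admits a constant $k_g$ with $u\leqel v\Rightarrow guk_g\leqel gvk_g$, and that these constants compose as $k_{gh}=k_hk_g$. The decisive consequence is a \emph{descent property} for the elements $e_g:=gk_g$. Since $v\leqel 1$ for all $v$, transport for $g$ gives $gvk_g\leqel gk_g$; taking $v=xk_x$ (for $x\in X$) yields $e_{gx}=g(xk_x)k_g\leqel g\,1\,k_g=e_g$. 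In other words, lengthening the word $g$ by a generator never enlarges the principal left ideal $Se_g$, so that along any infinite word $w_1\subseteq w_2\subseteq\cdots$ over $X$ we obtain a descending chain of principal left ideals $Se_{w_1}\supseteq Se_{w_2}\supseteq\cdots$.

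It then remains to extract an honest $\leqel$-minimal element (equivalently, a minimal left ideal), and this is the only point at which pseudo-finiteness is genuinely used. I would combine the finiteness of $X$, the finiteness of the perturbation set $\{xk_x:x\in X\}$, and the diameter bound $n=D(X,S)$ to argue that such a descending chain cannot keep decreasing strictly: each strict drop from $e_{w_i}$ to $e_{w_{i+1}}$ must be detectable within a bounded number of $X$-sequence steps, and the finite diameter caps how many genuinely new $\el$-classes can be produced, forcing some $Se_{w_r}$ to be minimal. I would also use Lemma~\ref{lem:pffg} (so that every element of $S$ factors as $xs$ with $x\in X$) to reduce arbitrary left multipliers to generators when checking minimality.

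I expect this last extraction step to be the main obstacle: turning the qualitative descent property $e_{gx}\leqel e_g$ into an actual minimum requires controlling \emph{strict} descents quantitatively, and it is here — and only here — that one needs the full strength of pseudo-finiteness (a uniform bound on derivation lengths), rather than merely the finite generation of $\omega_S$. By contrast, the transport lemma and the descent property are purely formal consequences of condition~(4) and will be routine; the interplay between the diameter bound and the combinatorics of the finite set $\{xk_x:x\in X\}$ is what will need careful bookkeeping.
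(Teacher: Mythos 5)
Your implications (1)$\Rightarrow$(2)$\Rightarrow$(3)$\Rightarrow$(4) are correct and coincide with the paper's proof, and your ``transport lemma'' for (4)$\Rightarrow$(1) is exactly the paper's inductive step: for a word $g=x_1\cdots x_n$ over $X$ one gets $u\,\leqel\,v\Rightarrow guk_g\,\leqel\,gvk_g$ with $k_g=k_{x_n}\cdots k_{x_1}$, and combining this with an $X$-sequence of length $n\leq N=D(X,S)$ from $a$ to $1$ yields, for each $a\in S$, a pair $(p,q)$ from a \emph{finite} set $V$ such that $psq\,\leqel\,asq$ for all $s\in S$.

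The gap is in the extraction step, and it is not merely ``bookkeeping.'' First, your reduction to the one-variable quantities $e_g=gk_g$ throws away the middle slot for $s$ in $psq\,\leqel\,asq$, and that slot is precisely what the correct argument needs. Second, your proposed mechanism --- that strict descents in the chain $Se_{w_1}\supseteq Se_{w_2}\supseteq\cdots$ are capped by the diameter bound, forcing stabilisation --- is not how minimality is obtained, and I do not see how to make it work: the descent property $e_{gx}\,\leqel\,e_g$ only compares elements indexed by words extending one another, whereas minimality of a left ideal $L$ requires $tw\,\el\,w$ for \emph{every} $t\in S$ and $w\in L$, and writing $t=xs$ via Lemma~\ref{lem:pffg} leaves an arbitrary $s$ with no $k_x$ appended on the right, so the transport property does not apply. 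The paper's actual device is different and does not involve chains at all: enumerate $V=\{(p_1,q_1),\dots,(p_m,q_m)\}$, set $q=q_1\cdots q_m$, and choose $k$ so that $p_kq$ is $\leqel$-minimal in the finite set $\{p_jq\}$. For any $t\in S$, applying the displayed property to $a=tp_k$ gives some $i$ with $p_isq_i\,\leqel\,tp_ksq_i$ for all $s$; substituting $s=q_1\cdots q_{i-1}$ and multiplying on the right by $q_{i+1}\cdots q_m$ yields $p_iq\,\leqel\,tp_kq\,\leqel\,p_kq$, whence $p_iq\,\el\,p_kq$ by the minimal choice, and so $tp_kq\,\el\,p_kq$. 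Thus $Sp_kq$ is a single $\el$-class and hence a minimal left ideal. You would need to replace your stabilisation heuristic with an argument of this kind (or something equivalent) to close the proof.
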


\begin{proof} (1)$\Rightarrow$(2)
Choosing a minimal left ideal $Sk$ of $S$, it is immediate from the minimality of $Sk$ that for any
$a\in S$ and $u,v\in S$ we have $auk\,\el\, uk\,\el\, k\, \el\, vk\,\el\ avk$, whence (2) trivially holds.

(2)$\Rightarrow$(3) Let $I$ be a left ideal satisfying the conditions of (2). Let $k\in I$ and $a\in S$. For any $u,v\in S$ with $u\,\leqel\, v$ we have that $uk\,\leqel\, vk$, since $\leqel$ is right compatible, and clearly $uk,vk\in I$. By assumption, $auk\,\leqel\, avk$. Thus (3) holds with 
 $k_a=k$ for all $a\in S.$ 
 
(3)$\Rightarrow$(4)  This is immediate.
 
(4)$\Rightarrow$(1) Suppose  that (4) holds.  Let $N=D(X, S).$  It is convenient to assume that $1\in X$ (we can take $k_1=1$).  Let $a\in S.$  There exists an $X$-sequence
\[a=x_1t_1,\, y_1t_1=x_2t_2, \dots,\, y_{n-1}t_{n-1}=x_nt_n,\, y_nt_n=1,\]
where $n\leq N.$  Let $s\in S.$  Then we have
\[as=x_1s_1,\, y_1s_1=x_2s_2, \dots,\, y_{n-1}s_{n-1}=x_ns_n,\, y_ns_n=s,\]
where $s_i=t_is$ for $1\leq i\leq n.$  Certainly $s\,\leqel\,s_n,$ so that $x_nsk_{x_n}\,\leqel\,x_ns_nk_{x_n}.$  Suppose for finite induction that for some $1<j\leq n$ we have 
\[x_jx_{j+1}\dots x_nsk_{x_n}\dots k_{x_{j+1}}k_{x_j}\,\leqel\,x_js_jk_{x_n}\dots k_{x_{j+1}}k_{x_j}.\]
Then, since $x_js_j=y_{j-1}s_{j-1},$ we have
\[x_jx_{j+1}\dots x_nsk_{x_n}\dots k_{x_{j+1}}k_{x_j}\,\leqel\,s_{j-1}k_{x_n}\dots k_{x_{j+1}}k_{x_j}.\]
It follows that 
\[x_{j-1}x_jx_{j+1}\dots x_nsk_{x_n}\dots k_{x_{j+1}}k_{x_j}k_{x_{j-1}}\,\leqel\,x_{j-1}s_{j-1}k_{x_n}\dots k_{x_{j+1}}k_{x_j}k_{x_{j-1}}.\]
By finite induction we have 
\begin{equation}\label{eqtn:minleftideal}
x_1x_2\dots x_nsk_{x_n}\dots k_{x_{2}}k_{x_1}\,\leqel\,x_1s_1k_{x_n}\dots k_{x_2}k_{x_1}=ask_{x_n}\dots k_{x_2}k_{x_1}.
\end{equation}
Hence, there is a finite set $V\subseteq S\times S$ with the following property: for any $a\in S$ there exists $(u, v)\in V$ such that
\[usv\,\leqel\,asv\]
for all $s\in S.$  Enumerate the elements of $V$ as $(p_1, q_1), \dots, (p_m, q_m),$ and let $q=q_1\dots q_m.$  Let $k\in\{1, \dots, m\}$ be such that $p_kq$ is minimal under $\leqel$ amongst $\{p_jq : 1\leq j\leq m\}.$
We claim that $L=Sp_kq$ is a minimal left ideal of $S.$  Clearly $L$ is a left ideal, so it suffices to prove that $L$ is the $\el$-class of $p_kq.$  So, let $t$ be any element of $S$ and consider $tp_kq.$  There exists $(p_i, q_i)\in V$ such that for any $s\in S$ we have
\[p_isq_i\,\leqel\,(tp_k)sq_i.\]
Taking $s=q_1\dots q_{i-1},$ we have
\[p_iq_1\dots q_{i-1}q_i\,\leqel\,(tp_k)q_1\dots q_{i-1}q_i.\]
Then, since $\leqel$ is right compatible, multiplying on the right by $q_{i+1}\dots q_m$ we obtain 
\[p_iq\,\leqel\,tp_kq.\]
Then $p_iq\,\leqel\,p_kq.$  Since $p_kq$ is minimal under $\leqel$ amongst $\{p_jq : 1\leq j\leq m\},$ we have that $p_iq\,\el\,p_kq.$  It follows that $tp_kq\,\el\,p_kq.$  This completes the proof. 
\end{proof}

Theorem~\ref{thm:minleftideal} applies to any monoids such that $uS\subseteq Su$ for any $u\in S$.

\begin{Cor}\label{cor:Lcompatible}
Let $S$ be a right pseudo-finite monoid.  If $\leqel$ is left compatible with multiplication, then $S$ has a minimal ideal that is the union of finitely many minimal left ideals.
\end{Cor}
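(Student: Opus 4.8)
Let $S$ be a right pseudo-finite monoid. If $\leqel$ is left compatible with multiplication, then $S$ has a minimal ideal that is the union of finitely many minimal left ideals.

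\begin{proof}[Proof proposal]
The plan is to obtain existence of a minimal left ideal directly from Theorem~\ref{thm:minleftideal}, and then to bound the number of such ideals separately. For existence, observe that left compatibility of $\leqel$ says precisely that $u\leqel v$ implies $au\leqel av$ for all $a\in S$; hence condition~(3) of Theorem~\ref{thm:minleftideal} holds trivially, with the choice $k_a=1$ for every $a\in S$. Therefore $S$ has a minimal left ideal, and by the Clifford theory recalled in Section~\ref{sec:prelim} (namely \cite[Theorem 2.1]{Clifford:1948}) $S$ then has a minimal two-sided ideal $K$, equal to the union of all the minimal left ideals of $S$. It remains only to show that there are finitely many of these.

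For the finiteness I would revisit the argument establishing $(4)\Rightarrow(1)$ in the proof of Theorem~\ref{thm:minleftideal}, specialised to the present situation where one may take $k_x=1$ for all $x$. Fix a finite generating set $X\subseteq S$ for $\omega_S$ with $1\in X$, and put $N=D(X,S)$. For an arbitrary $a\in S$, choosing an $X$-sequence from $a$ to $1$ of length $n\leq N$ and running the induction culminating in \eqref{eqtn:minleftideal} (with every $k_{x}$ equal to $1$) produces generators $x_1,\dots,x_n\in X$ with $x_1\cdots x_n\,s\leqel as$ for all $s\in S$; in particular, taking $s=1$ yields $x_1\cdots x_n\leqel a$. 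Setting
\[
W=\{x_1\cdots x_n : x_i\in X,\ 1\leq n\leq N\},
\]
which is finite because $X$ is finite and $n\leq N$, we conclude that for every $a\in S$ there exists $w\in W$ with $w\leqel a$.

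Finally I would pin down each minimal left ideal as some $S^1w$. Let $L$ be a minimal left ideal and pick $a\in L$; since $L$ is a left ideal we have $S^1a\subseteq L$. Choosing $w\in W$ with $w\leqel a$, i.e.\ $S^1w\subseteq S^1a$, gives $S^1w\subseteq L$. As $S^1w$ is itself a nonempty left ideal and $L$ is minimal, $S^1w=L$. Hence every minimal left ideal has the form $S^1w$ for some $w\in W$, so there are at most $|W|$ of them, and $K$ is their union. The main obstacle is the finiteness count rather than the existence: existence is immediate from Theorem~\ref{thm:minleftideal}, and the real work lies in extracting, from the diameter bound, a single finite set $W$ of bounded-length products of generators that ``reaches below'' every element of $S$ in the $\leqel$-order, thereby forcing each minimal left ideal to coincide with $S^1w$ for one of the finitely many $w\in W$.
\end{proof}
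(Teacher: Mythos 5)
Your proposal is correct and follows essentially the same route as the paper: take $k_x=1$ in Theorem~\ref{thm:minleftideal}(4), use equation~\eqref{eqtn:minleftideal} with $s=1$ to get a finite set of bounded-length products of generators lying $\leqel$-below every element, and conclude that each minimal left ideal is $S^1w$ for one of these finitely many $w$. The paper's proof is just a more compressed version of the same argument; your extra detail on why each minimal left ideal equals some $S^1w$ is a fair elaboration of the paper's final sentence.
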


\begin{proof}
Let $X$ be as given in the statement of Theorem \ref{thm:minleftideal}.  Since $\leqel$ is left compatible by assumption, in (4) of the statement of Theorem \ref{thm:minleftideal} we can set $k_x=1$ for all $x\in S.$  Let 
$$Q=\{x_1\dots x_n : x_i\in X, n\leq N\}.$$  Clearly $Q$ is finite.  From the equation (\ref{eqtn:minleftideal}) in the proof of Theorem \ref{thm:minleftideal} with $s=1,$ we see that for any $a\in S$ there exists some $x=x_1\dots x_n\in Q$ such that $x\leq_{\mathcal{L}}a.$  It follows that $S$ has finitely many minimal left ideals (the union of which is the minimal ideal of $S$).
\end{proof}

Corollary~\ref{cor:Lcompatible} yields another proof that a  commutative pseudo-finite monoid must have a minimal (left) ideal.

%We note that if $T$ is a regular subsemigroup of $S$, then$\leqel\cap (T\times T)$ is the relation $\leqel$ {\em on $T$}.
%\begin{Cor}\label{cor:Lcompatiblereg} Let $S$ be a pseudo-finite monoid. Then $S$ has a completely simple minimal ideal if and only if $S$ has a regular ideal $I$ for which the relation $\leqel$ (on $I$) is left compatible with multiplication in $S$.
%\end{Cor}
%\begin{proof} For the forward direction, if $S$ has a completely simple minimal ideal then it is a union of minimal left ideals. If $L$ is any one such, then $L$ is a union of groups (so is regular) and as it is left simple clearly the given conditions hold.\end{proof} 

We now consider the existence of a minimal ideal in a right pseudo-finite monoid.

\begin{Thm}\label{thm:minideal} 
Let $S$ be a right pseudo-finite monoid.  The following are equivalent:
\begin{thmenumerate}
\item $S$ has a minimal ideal;
\item $S$ has an ideal $I$ such that $\leqj\cap (I\times I)$
is left compatible with multiplication in $I$;
\item for each $a\in S$ there exists $k=k_a\in S$ such that for any $u, v\in S,$
\[u\,\leqj\,v \mbox{ implies that }auk\,\leqj\,avk;\]
\item $\omega_S$ has a finite generating set $X\subseteq S,$ and for each $x\in X$ there exists $k=k_x\in S$ such that for any $u, v\in S,$
 \[u\,\leqj\,v \mbox{ implies that }xuk\,\leqj\,xvk.\]
\end{thmenumerate}
\end{Thm}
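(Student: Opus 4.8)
The plan is to mirror the proof of Theorem~\ref{thm:minleftideal}, replacing $\leqel$ by $\leqj$, left ideals by two-sided ideals, and the minimal left ideal by the minimal (two-sided) ideal. I would establish the equivalences through the implications (1)$\Rightarrow$(2), (1)$\Rightarrow$(3), (3)$\Rightarrow$(4), (4)$\Rightarrow$(1), together with the one substantial extra implication (2)$\Rightarrow$(1); note that (1)$\Rightarrow$(3)$\Rightarrow$(4)$\Rightarrow$(1) makes (1),(3),(4) equivalent, while (1)$\Rightarrow$(2) and (2)$\Rightarrow$(1) close the loop. A simplification peculiar to the two-sided case is that $S$ has a minimal ideal if and only if there is a single element $z\in S$ with $z\leqj a$ for all $a\in S$; for then $S^1zS^1$ is contained in every ideal and hence is the minimal ideal. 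This makes the final construction less delicate than in the minimal-left-ideal case, where one genuinely has to isolate an $\el$-class.

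The heart of the argument is (4)$\Rightarrow$(1). Write $N=D(X,S)$ and assume $1\in X$ with $k_1=1$. Given $a\in S$, take an $X$-sequence $a=x_1t_1,\ y_1t_1=x_2t_2,\ \dots,\ y_nt_n=1$ of length $n\le N$ from $a$ to $1$. By descending induction one shows $x_j\cdots x_n\,k_{x_n}\cdots k_{x_j}\leqj x_jt_j\,k_{x_n}\cdots k_{x_j}$: each inductive step applies the hypothesis of (4) (left-multiplying by the relevant $x_i$ and right-multiplying by $k_{x_i}$), while the junctions $x_jt_j=y_{j-1}t_{j-1}$ are handled by the elementary observation that $yw\leqj w$ for every $y\in S^1$, so a spare left factor may always be absorbed. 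Taking $j=1$ and using $x_1t_1=a$ yields $w_a:=x_1\cdots x_n\,k_{x_n}\cdots k_{x_1}\leqj a$. As $a$ varies, $w_a$ ranges over the finite set $W=\{x_1\cdots x_n k_{x_n}\cdots k_{x_1}:x_i\in X,\ n\le N\}$, so for each $a\in S$ some $w\in W$ satisfies $w\leqj a$. Finally let $z$ be the product of all the elements of $W$ in any fixed order; then $z\leqj w$ for every $w\in W$, whence $z\leqj a$ for all $a\in S$, and $S^1zS^1$ is the minimal ideal.

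The remaining implications into and among (1), (3), (4) are routine. For (1)$\Rightarrow$(3), if $K$ is the minimal ideal then choosing $k_a\in K$ forces $auk_a,avk_a\in K$; since all elements of $K$ are $\jay$-related in $S$ we get $auk_a\leqj avk_a$ automatically. Implication (3)$\Rightarrow$(4) is immediate: right pseudo-finiteness supplies a finite generating set $X$ for $\omega_S$, and (3) applied to each $x\in X$ delivers the required $k_x$. For (1)$\Rightarrow$(2), take $I=K$; since $K$ is $\jay$-simple, $\leqj\cap(K\times K)$ is the universal relation on $K$ and so is trivially left compatible with multiplication in $K$.

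The main obstacle is (2)$\Rightarrow$(1). Here the direct analogue of the (2)$\Rightarrow$(3) step of Theorem~\ref{thm:minleftideal} breaks down: $\leqj$ is compatible on neither side, and condition (2) only grants left compatibility localised to $I$, so from $u\leqj v$ one can neither pass to $uk\leqj vk$ nor left-multiply by the arbitrary element $a\in S$. My plan is instead to prove (2)$\Rightarrow$(1) directly, again reducing to the production of a $\leqj$-least element. Fixing $c\in I$, the principal right ideal $cS^1\subseteq I$ is pseudo-finite as a right $S$-act by Proposition~\ref{prop:rightideals}, and it suffices to find a $\leqj$-least element of $cS^1$, since $ca\in cS^1$ and $ca\leqj a$ for every $a\in S$. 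The left compatibility in (2) makes left multiplication by $c$ an order-preserving, deflationary self-map of $cS^1$ (as $cw\leqj w$ and $w\leqj w'\Rightarrow cw\leqj cw'$). The crux — and the step I expect to require the most care — is to combine the finite diameter of $cS^1$ with this monotone deflationary action to locate the least element; this is precisely where the right-sided finiteness coming from pseudo-finiteness has to be reconciled with the left-sided compatibility coming from (2), and a naive "multiply all the generators together" attempt fails because it produces elements below the generators of $cS^1$ but not below a general $cs$.
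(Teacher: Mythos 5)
Your treatment of the implications (1)$\Rightarrow$(2), (1)$\Rightarrow$(3), (3)$\Rightarrow$(4) and (4)$\Rightarrow$(1) is correct and matches the paper's argument; in particular, the observation that in the two-sided case one only needs a single element $z$ with $z\leqj a$ for all $a\in S$ (rather than isolating a particular $\el$-class, as in Theorem~\ref{thm:minleftideal}) is exactly the simplification the paper exploits by running that argument with $s=1$. The genuine gap is the implication out of (2). You correctly diagnose that the naive analogue of the (2)$\Rightarrow$(3) step of Theorem~\ref{thm:minleftideal} fails, but your replacement plan for (2)$\Rightarrow$(1) --- locating a $\leqj$-least element of $cS^1$ by combining its finite diameter with the deflationary map $w\mapsto cw$ --- is left unresolved at its crux, and as described it does not go through: a generating sequence from $a$ to a basepoint yields relations of the form $a=x_1s_1\leqj x_1$, which place $a$ \emph{below} the generators rather than producing elements below $a$, and iterating the deflation gives a descending chain in $\leqj$ with no mechanism forcing it to reach a global minimum. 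Without the elements $k_x$ of condition (4) there is nothing to reverse the direction of these inequalities.

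The paper closes the loop by proving (2)$\Rightarrow$(4) instead, via a device your proposal is missing: fix $w\in I$ and replace the finite generating set $X$ of $\omega_S$ by $\{1,w\}\cup wX$ (still a finite generating set, with diameter increased by at most $2$), so that every generator other than $1$ lies in the ideal $I$. Then for a generator $y\in I$, a fixed $k\in I$, and $u\leqj v$ written as $u=pvq$, one does not attempt to prove $uk\leqj vk$ directly. Instead, $uk=p(vqk)$ gives $uk\leqj vqk$ with both sides in $I$ (no compatibility needed, just the definition of $\leqj$); the hypothesised left compatibility within $I$, applied with the factor $y\in I$, gives $yuk\leqj yvqk$; and the same compatibility applied with the factor $yv\in I$ to the relation $qk\leqj k$ (both sides in $I$) gives $yvqk\leqj yvk$. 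Transitivity then yields $yuk\leqj yvk$, which is precisely condition (4), and the argument you already have for (4)$\Rightarrow$(1) finishes the proof. You would need to supply this step, or an actual completion of your direct route, for the proposal to constitute a proof.
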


\begin{proof} 
Suppose that (1) holds.  The argument that (2) holds is as in Theorem~\ref{thm:minleftideal}.  Further, letting $k$ be any element of the minimal ideal of $S,$ it is clear that (3) holds with $k_a=k$ for all $a\in S.$  Clearly (3) implies (4).

To see that (2) implies (4), let $I$ be the ideal witnessing (2). 
Letting $X$ be a finite generating set of $\omega_S$. Replacing $X$ with $\{ 1,w \}\cup wX$ if necessary, where $w\in I$, we can assume that $X=\{ 1\} \cup Y$, where $Y\subseteq I$. 
It is clear that we can put $k_1=1$.  Now let $y\in Y$ and fix $k\in I$.  Let $u,v\in S$ with $u\,\leqj\, v$, so that $u=pvq$ for some $p,q\in S$.  Certainly $uk=pvqk$ and $uk, vqk\in I$ with $uk\,\leqj\, vqk$.  By assumption, $yuk\,\leqj\, yvqk$.  Further, $qk\,\leqj\, k$ and $qk,k\in I$, so that again by the compatibility assumption, $yvqk\,\leqj\, yvk$. By transitivity of
$\leqj$ we have $yuk\,\leqj\, yvk$ and so (4) holds.

Suppose now that (4) holds.  By essentially the same argument as the one in the proof of (4)$\Rightarrow$(1) of Theorem \ref{thm:minleftideal}, with $s=1$ and $\leqj$ instead of $\leqel$, we obtain a finite set $V\subseteq S$ with the following property: for any $a\in S$ there exists $v\in V$ such that $v\,\leqj\,a.$  Enumerating the elements of $V$ as $q_1, \dots, q_m$ and letting $q=q_1\dots q_m,$ it follows that $q\,\leqj\,a$ for all $a\in S.$  Thus $J_q$ is the minimal ideal of $S.$
\end{proof}

\begin{Cor}\label{cor:Jcompatible}
Let $S$ be a right pseudo-finite monoid.  If $\leqj$ is left compatible  with multiplication, then $S$ has a minimal ideal.
\end{Cor}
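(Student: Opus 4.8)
The plan is to derive this directly from Theorem~\ref{thm:minideal} by verifying one of its equivalent conditions. Since $S$ is assumed right pseudo-finite, that theorem applies, and it suffices to exhibit the data required by any one of conditions (2), (3) or (4).

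The cleanest route is via condition (2). The whole monoid $S$ is itself an ideal of $S$, so I would take $I=S$. Then $\leqj\cap(I\times I)$ is simply the relation $\leqj$ on $S$, which is left compatible with multiplication in $S=I$ precisely by hypothesis. Thus condition (2) holds, and Theorem~\ref{thm:minideal} immediately yields that $S$ has a minimal ideal. Alternatively, one may verify condition (3) directly: for each $a\in S$ set $k_a=1$. If $u\,\leqj\,v$, then left compatibility gives $au\,\leqj\,av$, that is, $auk_a\,\leqj\,avk_a$, as required.

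Since the hypothesis is exactly the global version of the local compatibility appearing in Theorem~\ref{thm:minideal}, there is no genuine obstacle here: all the substantive work has already been carried out in establishing that theorem. The only point requiring a moment's care is to confirm that taking $I=S$ is legitimate, i.e.\ that $S$ qualifies as an ideal of itself---which it trivially does---so that the local compatibility demanded by condition (2) is automatically inherited from the assumed global compatibility.
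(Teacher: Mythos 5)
Your proof is correct and matches the paper's intent exactly: the corollary is an immediate consequence of Theorem~\ref{thm:minideal}, obtained by taking $I=S$ in condition (2) or, equivalently, $k_a=1$ for all $a$ in condition (3), both of which are trivially satisfied when $\leqj$ is globally left compatible. The paper offers no separate argument, so there is nothing further to compare.
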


%\begin{proof}The proof is essentially the same as that of Corollary \ref{cor:Lcompatible}.\end{proof}

To see how Corollary~\ref{cor:Jcompatible} may be applied, let
$S$ be a semilattice $Y$ of semigroups $S_{\alpha}$ where the $S_{\alpha}$ are simple.  It is easy to see that the $S_{\alpha}$ are the $\jay$-classes of $S$, and that $a\,\leqj\, b$ if and only if $a\in S_{\alpha},b\in S_{\beta}$ and $\alpha\leq \beta$.
(Here we are using the natural order in a semilattice given by $x\leq y$ if and only if $x=xy=yx$.)  Since the order in the semilattice is compatible, it is immediate that $\leqj$ is compatible in $S$.  This yields another proof that right pseudo-finite completely regular semigroups (including bands) must have minimal ideals. 

%It is also worth commenting on how Theorem~\ref{thm:minideal} may be adapted to obtain necessary and sufficient conditions for a minimal regular  ideal in a right pseudo-finite semigroup $S$.  If $I$ is a regular subsemigroup of $S$ (in particular, a regular ideal), then it is easy to see that $\leqj\cap(I\times I)$ is the relation $\leq_{\mathcal{J}}^I,$ where the superscript denotes the relation {\em on I}.  Since a minimal ideal is actually a {\em minimum} ideal, it follows that if $J$ is a regular ideal then so is any ideal $I$ contained in $J$. 

%\begin{Cor} \label{cor:regminideal} 
%Let $S$ be a right pseudo-finite monoid.  Then $S$ has a regular minimal ideal if and only if $S$ has a regular ideal $J$ such that the relation $\leq_{\mathcal{J}}^J$ is left compatible on $J$.  
%\end{Cor}

%With some standard manipulation on minimal (left) ideals we can obtain: {\color{Green} More detail required?} 

%\begin{Cor} \label{cor:csminideal}  
%Let $S$ be a right pseudo-finite monoid. Then $S$ has a completely simple minimal ideal if and only if:
%\begin{thmenumerate}\item $S$ has a left ideal $I$ such that $\leqel\cap (I\times I)$ is left comptatible on $S$;
%\item $S$ has a regular ideal $J$ such that the relation $\leq_{\mathcal{J}}^J$  is left compatible on $J$.
%\end{thmenumerate}
%end{Cor}

\section{A Rees Matrix Semigroup Extension}\label{sec:con}

In this section we introduce a construction, in the form of a specific ideal extension of a Rees matrix semigroup, which is then used to exhibit a series of examples illustrating our findings from the previous sections, and to explore their limitations.
We first introduce the construction in its most general form, and then give a special instance of it that will be particularly useful in constructing various examples without minimal ideals in the next section.

%In this section we introduce a construction involving actions and a matrix, and consider conditions under which this construction is right/left pseudo-finite.  We will employ this construction repeatedly throughout the paper.  We first recall the following definition.
%
%\begin{Def}
%Let $T$ be a semigroup, let $I$ and $J$ be non-empty index sets, and let $P=(p_{j,i})$ be a $J\times I$ matrix with entries from $T.$  We define a multiplication on the set $S=I\times T\times J$ by $$(i, t, j)(k, u, l)=(i, tp_{j,k}u, l).$$
%Under this multiplication, $S$ is a semigroup.  It is called the {\em Rees matrix semigroup over $T$ with respect to $P$} and is denoted by $\mathcal{M}(T; I, J; P).$
%\end{Def}
%
%A well-known result, due to Rees, states that a semigroup $S$ is completely simple if and only if it is isomorphic to a Rees matrix semigroup $\mathcal{M}(G; I, J; P)$ (see \cite[Theorem 3.3.1]{Howie}).
%

\begin{con}\label{con1}
Let $S$ and $T$ be semigroups.  Let $I$ be a left $S$-act and let $J$ be a right $S$-act.  Let $P=(p_{j,i})$ be a $J\times I$ matrix with entries from $T$ such that $p_{js,i}=p_{j,si}$ for all $i\in I,$ $j\in J$ and $s\in S.$  Let $M=S^1\cup\mathcal{M}[T; I, J; P].$  
Define a multiplication on $M,$ extending those on $S^1$ and $\mathcal{M}[T; I, J; P],$ as follows:
$$s(i, t, j)=(si, t, j)\text{ and }(i, t, j)s=(i, t, js)$$
for all $i\in I,$ $j\in J,$ $s\in S^1$ and $t\in T.$  One can check by an exhaustive case analysis that this multiplication is associative, and hence $M$ is a monoid with identity 1.  Here is a sample case, in which the assumption $p_{js,i}=p_{j,si}$ is used:
\begin{align*}
\bigl((i, t, j)s\bigr)(k, u, l)&=(i, t, js)(k, u, l)=(i, tp_{js,k}u, l)=(i, tp_{j,sk}u, l)
=(i, t, j)(sk, u, l)\\&=(i, t, j)\bigl(s(k, u, l)\bigr).
\end{align*}
We denote the monoid $M$ by $\mathcal{E}(S, T; I, J; P).$ 
We permit $S$ to be empty in this construction, in which case $\mathcal{E}(S, T; I, J; P)$ is simply $\mathcal{M}[T; I, J; P]^1$.
\end{con}

\begin{Rem}
\label{re-byleen}
The above construction is closely related to another matrix construction, introduced by Byleen \cite{Byleen:1988}.
Specifically, the monoid $\mathcal{E}(S, T; I, J; P)$ can be found as a subsemigroup inside some Byleen's monoid $\mathscr{C}(U;\beta,\alpha;P)^1,$ where $U$, $\alpha$, $\beta$ are as outlined as follows.
Let $U=S\cup T,$ and define a multiplication on $U,$ extending those on $S$ and $T,$ by $st=ts=s$ for all $s\in S$ and $t\in T$.
The left (resp.\ right) action of $S$ on $I$ (resp.\ $J$) can be extended to a left (resp.\ right) action $\beta$ (resp.\ $\alpha$) of  $U$ on $I$ (resp.\ $J$) by setting $ti=i$, $jt=j$ for all $i\in I$, $j\in J$, $t\in T$.  It is then straightforward to check that all the conditions are satisfied for forming the Byleen semigroup $\mathscr{C}(U;\beta,\alpha;P),$ as specified in  \cite{Byleen:1988}.
Its elements are $I^+J^\ast\cup I^\ast UJ^\ast \cup I^\ast J^+$ \cite[Lemma 1.1]{Byleen:1988}.
The monoid $\mathcal{E}(S, T; I, J; P)$ embeds into $\mathscr{C}(U;\beta,\alpha;P)^1$ via $1\mapsto 1$,  $s\mapsto s$ ($s\in S$), $(i,t,j)\mapsto itj$ ($i\in I$, $j\in J$, $t\in T$).
This observation can be used to avoid checking associativity above.
\end{Rem}

%We note that if $S^1$ acts trivially on $I$ and $J$ (that is, $si=i$ and $js=s$ for all $i\in I, j\in J$ and $s\in S^1$), then any $J\times I$ matrix $P$ satisfies the required condition of Construction \ref{con1}.

It turns out that the construction yields a plethora of pseudo-finite monoids. The following result gives a sufficient condition for this to be the case.  It is interesting to compare it with Theorem \ref{thm:csmi}, Corollary \ref{Cor:cs1} and \cite[Theorem 6.5]{Dandan:2019}, which all deal with extensions of Rees matrix semigroups.

\begin{Thm}\label{thm:con1}
Let $M=\mathcal{E}(S, T; I, J; P).$  Suppose that the following conditions hold:
\begin{thmenumerate}
\item $T=T^1X$ for some finite set $X\subseteq T$;
\item there exists a finite subset $J_0\subseteq J$ such that $T=\{p_{j,i} : j\in J_0, i\in I\}$;
\item there exists $j_0\in J_0$ such that $p_{j_0,i}\in X$ for all $i\in I$;
\item $J$ is pseudo-finite as a right $S$-act.
\end{thmenumerate}
Then $M$ is right pseudo-finite.
\end{Thm}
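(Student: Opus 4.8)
The plan is to prove directly that the universal right congruence $\omega_M$ is finitely generated with finite right diameter, by bounding the distance of an arbitrary element of $M$ from the identity $1$; since each $d_Z$ is a metric, a uniform bound $d_Z(m,1)\le C$ forces $D_r(Z,M)\le 2C$ via the triangle inequality. Throughout I write a general element of the Rees part as $(i,t,j)$ with $i\in I$, $t\in T$, $j\in J$, and I use that $N=\mathcal{M}[T;I,J;P]$ is an ideal of $M$, so that right multiplication keeps us inside $N$. The three coordinates are treated in turn, and the feature I exploit repeatedly is that in an $X$-sequence (Lemma \ref{lem:sequence}) a generating pair is multiplied on the right by a free multiplier $\mu\in M^1$, while the generator's own leg multiplies $\mu$ on the left: left multiplication by an element of $N$ overwrites the $I$-coordinate and left-injects into the $T$-coordinate, whereas right multiplication appends to the $T$-coordinate and resets the $J$-coordinate. (One might hope to invoke Proposition \ref{prop:rightideals} with a pseudo-finite subideal of $N$, but, as the $T$ being a left-zero semigroup already shows, no such subideal need be pseudo-finite; it is the monoid identity that makes the argument go, so I work with $M$ itself.)

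First I would dispose of the easy coordinates. An element $s\in S$ is sent into $N$ in one step: with a pair $(1,n_0)$, where $n_0=(i_\ast,t_\ast,j_\ast)$, and multiplier $s$, we connect $s$ to $n_0 s=(i_\ast,t_\ast,j_\ast s)$, whose $I$- and $T$-coordinates are already frozen. For a general $(i,t,j)\in N$, the same pair with multiplier $(i,t,j)$ connects it in one step to $n_0\cdot(i,t,j)$, whose $I$-coordinate is the fixed $i_\ast$; collapsing this otherwise-invariant (and possibly infinitely varied) $I$-coordinate is precisely where the identity is indispensable. The $J$-coordinate is then handled by condition (4): a finite generating set $Y$ for $\omega_J$ as a right $S$-act, with its finite $Y$-diameter, lifts to finitely many $M$-generators $\big((i_\ast,t_\flat,y),(i_\ast,t_\flat,y')\big)$ indexed by $(y,y')\in Y$ and by the finitely many core values $t_\flat$; since the $S$-action alters only the $J$-coordinate, an $S$-sequence in $J$ lifts verbatim to a bounded $N$-sequence.

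The heart of the argument, and the step I expect to be the main obstacle, is the reduction of the $T$-coordinate to a finite core. Here right multiplication can only append to $t$, and $T$ is an arbitrary semigroup with no cancellation and in general no identity, so one cannot strip factors off $t$, nor iterate the factorisation $t=t'x$ from condition (1) (that would cost linearly many steps). Instead I would realise the bulk of $t$ as a single matrix entry: by condition (2) every element of $T$ equals some $p_{a,b}$ with $a\in J_0$ finite and $b\in I$, so the size of a chunk of $t$ is carried by its column index $b$ and is produced in one multiplication. The collapse then uses a pair whose two legs sit in a rich row $a\in J_0$ and in the special row $j_0$: for a common multiplier index the rich leg reproduces the chunk, while by condition (3) the $j_0$-leg produces an entry lying in the finite set $X$, so that in a single step the chunk is replaced by a bounded $X$-expression, with condition (1) supplying the distinguished right $X$-factor used as the fixed multiplier tail. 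The delicate points are the bookkeeping of the residual prefix left by the generator's own $T$-component, and checking that, in the prescribed order (freeze $I$, collapse $T$, navigate $J$), every element really is matched as $w\mu$ for one of the finitely many generators; obtaining one bound valid uniformly for all $T$ satisfying (1)--(3) is the technical crux.

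Finally, once every element has been brought in boundedly many steps to the finite core (elements with $I$-coordinate $i_\ast$, $T$-coordinate in $X$, and $J$-coordinate in a finite generating set of $J$), I would connect the core to $1$ using the anchor pairs $(1,n_0)$ together with finitely many pairs internal to the core, and then read off the explicit diameter bound. By Lemma \ref{lem:pffg} this simultaneously re-proves that $M$ is finitely generated as a right ideal, which serves as a consistency check on the construction.
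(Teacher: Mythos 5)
Your plan is correct and is essentially the paper's proof: the paper fixes $i_0\in I$, shows that the right ideal $K=\{i_0\}\times XT^1\times J$ is pseudo-finite as a right $M$-act using exactly the one-step $T$-collapse you describe (generator pair $\bigl((i_0,x,j'),(i_0,x,j_0)\bigr)$ with multiplier $(i,z,j)$, where $t=xs$, $s=s'z$ with $z\in X$ and $s'=p_{j',i}$), then navigates the $J$-coordinate via condition (4), and finally invokes Lemma \ref{lem:subact} --- which is precisely your ``connect everything to the finite core and then to $1$'' step in packaged form. The only quibble is your parenthetical: the subideal route does work, since this $K$ is pseudo-finite as a right $M$-act even when $T$ is a left-zero semigroup (the $M^1$-multipliers available in the act setting already supply the freedom you attribute to the identity of $M$).
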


\begin{proof}
Without loss of generality we may assume that $\omega_J$ is generated by $J_0$ (otherwise, given a finite generating set $J_1\subseteq J$ of $\omega_J$, let $J_0^{\prime}=J_0\cup J_1$; then $J_0^{\prime}$ can replace $J_0$ in conditions (2) and (3) of the statement of the result).  Let $d$ denote the $J_0$-diameter $D(J_0, J),$ which is finite since $J$ is pseudo-finite.\par
Fix $i_0\in I.$  We prove that the right ideal $K=\{i_0\}\times XT^1\times J$ of $M$ is pseudo-finite as a right $M$-act, from which it follows that $M$ is right pseudo-finite by Lemma \ref{lem:subact}.  Let $U=X\cup X^2\cup X^3,$ and let $$H=\{(i_0, u, j) : u\in U, j\in J_0\}.$$
Since $U$ and $J_0$ are finite, so is $H.$  We shall show that $\omega_K$ is generated by $H$ and that $D(H, K)\leq d+2.$  
Let $m=(i_0, xs, j), n=(i_0, yt, k)\in K,$ where $x, y\in X$ and $s, t\in T^1.$  We first claim that there exists some $u\in U$ with an $H$-sequence of length no greater than 1 from $m$ to $(i_0, u, j).$  If $s\in X\cup\{1\}$ then $m\in H,$ so we can just set $u=xs.$  Otherwise, by (1) we have $s=s^{\prime}z$ for some $s^{\prime}\in T$ and $z\in X.$  By (2) there exist $j^{\prime}\in J_0$ and $i\in I$ such that $s^{\prime}=p_{j^{\prime},i}.$  We have an $H$-sequence
$$m=(i_0, x, j^{\prime})(i, z, j),\; (i_0, x, j_0)(i, z, j)=(i_0, xp_{j_0, i}z, j).$$
Since $p_{j_0, i}\in X$ by (3), setting $u=xp_{j_0, i}z$ establishes the claim.  Similarly, there exists some $v\in U$ with an $H$-sequence of length no greater than 1 from $n$ to $(i_0, v, k).$  Now, there exists a $J_0$-sequence
$$j=j_1s_1, k_1s_1=j_2s_2, \dots, k_ls_l=k$$
where $l\leq d.$  Thus, we have an $H$-sequence
\begin{align*}
(i_0, u, j)=(i_0, u, j_1)s_1,\; (i_0, v, k_1)s_1=(i_0, v, j_2)s_2,\; (i_0, v, k_2)s_2=&(i_0, v, j_3)s_3, \dots,\\ 
&(i_0, v, k_l)s_l=(i_0, v, k).
\end{align*}
We conclude that there exists an $H$-sequence of length no greater than $d+2$ from $m$ to $n,$ as required.
\end{proof}

%Of course, we have the following dual of Theorem \ref{thm:con1}.
%
%\begin{Thm}\label{thm:con1,dual}
%Let $M=\mathcal{E}(S, T; I, J; P).$  Suppose that the following conditions hold:
%\begin{thmenumerate}
%\item $T=XT^1$ for some finite set $X\subseteq T$;
%\item there exists a finite subset $I_0\subseteq J$ such that $T=\{p_{j,i} : i\in I_0, j\in J\}$;
%\item there exists $i_0\in I_0$ such that $p_{j,i_0}\in X$ for all $j\in J$;
%\item $I$ is pseudo-finite as a left $S^1$-act.
%\end{thmenumerate}
%Then $M$ is left pseudo-finite.
%\end{Thm}

\begin{Rem}
\label{rem:orthrev}
The final example invoked in Example \ref{ex:orthodox} can be explicitly realised as $\mathcal{E}(S, T; I, J; P)$, where $T$ is trivial, the index sets are $I=\{i_s\::\: s\in S\}$ and $J=\{j_s\::\: s\in S\}$,
and the actions are given by $ti_s=i_{ts}$, $j_s t=j_{st}$ $(s, t\in S)$.
The right pseudo-finiteness follows from Theorem \ref{thm:con1}, and left pseudo-finiteness follows by duality.
\end{Rem}

\begin{Rem}
\label{rem:2minid}
In Theorem \ref{thm:rr} we have seen that if a right pseudo-finite monoid $S$ has a unique minimal left ideal then this has to be completely simple.  We can use our construction to show that there exists a right pseudo-finite monoid with precisely two minimal left ideals, but the minimal ideal of which is not completely simple.
Indeed, take $S$ to be empty, and let $T$ be any left simple semigroup that is not completely simple.
For the index sets, take $I=\{i_t\::\: t\in T\}$ and $J=\{1,2\}$, and pick the entries of $P$ to ensure one row is constant and the other contains all elements of $T.$
The semigroup $\mathcal{E}(S, T; I, J; P)=\mathcal{M}[T;I,J;P]^1$ has a minimal two-sided ideal $\mathcal{M}[T;I,J;P]$, which is a disjoint union of two minimal left ideals $I\times T\times\{j\}$ ($j=1,2$) and is not completely simple.
\end{Rem}

\begin{con}\label{con2}
Let $S$ be a semigroup such that $S=YS^1=S^1Y$ for some finite set $Y\subseteq S$ (that is, $S$ is finitely generated both as a right ideal and a left ideal).  Let $I=\{i_s : s\in S\}\cup\{0\},$ and define right and left actions of $S$ on $I$ as follows:
$$i_st=i_{st},\, ti_s=i_{ts},\, 0t=t0=0\quad (s,t\in S).$$
Fix $x\in Y,$ and let $P=(p_{i,j})$ be the $I\times I$ matrix with entries given by $p_{0,i}=p_{i,0}=x$ for all $i\in I$ and $p_{i_s,i_t}=st$ for all $s, t\in S.$  It is easy to see that $p_{is,j}=p_{i,sj}$ for all $i, j\in I$ and $s\in S.$  We denote the monoid $\mathcal{E}(S, S; I, I; P)$ by $\mathcal{E}(S, x)$.
In the case that $S$ is a monoid, we abbreviate $\mathcal{E}(S, 1_S)$ to $\mathcal{E}(S).$
\end{con}

\begin{Cor}\label{cor:con2pf}
For any $S$ and $x$ as in Construction \ref{con2}, the monoid $\mathcal{E}(S, x)$ is both right and left pseudo-finite.
\end{Cor}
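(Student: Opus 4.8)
The plan is to prove right pseudo-finiteness by running the argument of Theorem~\ref{thm:con1} with $X=Y$, and then to deduce left pseudo-finiteness by duality. Write $M=\mathcal{E}(S,x)=\mathcal{E}(S,S;I,I;P)$. With $X=Y$ we have $T=S=S^1Y=T^1X$, so hypothesis~(1) of Theorem~\ref{thm:con1} holds; and taking the distinguished column index $j_0=0$ we have $p_{0,i}=x\in X$ for all $i$, so hypothesis~(3) holds with any $J_0\ni 0$. Since $XS^1=YS^1=S$, the relevant right ideal is $K=\{i_0\}\times S\times I$ for a fixed $i_0\in I$, and by Proposition~\ref{prop:rightideals} it is enough to show that $K$ is pseudo-finite as a right $M$-act.

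For the analogue of hypothesis~(4) I would show that $I$ is pseudo-finite as a right $S$-act, and here the zero $0\in I$ is essential. As $S=YS^1$, each $s\in S$ is $s=yu$ with $y\in Y$, $u\in S^1$, so $i_s=i_y\cdot u$; hence the pair $(i_y,0)$ together with the multiplier $u$ gives a length-one sequence from $i_s$ to $0\cdot u=0$. Thus the finite set $\{(i_y,0):y\in Y\}$ generates $\omega_I$ and joins every element to $0$ in at most one step, so $I$ has diameter at most $2$.

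To handle $K$ I would take $U=Y\cup Y^2\cup Y^3$ and $J_0=\{0\}\cup\{i_y:y\in Y\}$, both finite, and set $H=\{(i_0,u,j):u\in U,\ j\in J_0\}$. The heart of the argument is a reduction showing that any $(i_0,t,j)$ can be moved, in at most one step, to some $(i_0,u,j)$ with $u\in U$: if $t\notin U$ one writes $t=xs$ with $x\in Y$ (using $S=YS^1$) and $s=s'z$ with $z\in Y$ (using $S=S^1Y$), factors $(i_0,t,j)=(i_0,x,i_{y'})(i_u,z,j)$ once $s'$ is recognised as a matrix entry, and then applies the pair $\{(i_0,x,i_{y'}),(i_0,x,0)\}\subseteq H$ to replace the entry by $p_{0,i_u}=x$, arriving at $(i_0,x^2z,j)$ with $x^2z\in U$. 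Two reduced elements $(i_0,u_1,j_1)$ and $(i_0,u_2,j_2)$ are then joined by lifting a $J_0$-sequence between $j_1$ and $j_2$ to an $H$-sequence (any pair of elements of $H$ is a generating pair, which lets one switch the middle coordinate in the first step). Tracking lengths gives diameter at most $4$, so $K$ is pseudo-finite and $M$ is right pseudo-finite.

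The main obstacle, and the reason one cannot simply invoke Theorem~\ref{thm:con1}, is that its hypothesis~(2)---that every element of $T=S$ is a matrix entry $p_{j,i}$ with $j$ ranging over a \emph{finite} set---fails in general here: for example when $S$ is free, a free generator distinct from $x$ is neither a product nor equal to $x$, hence is not an entry of $P$ at all. The point that rescues the argument is that in the reduction above hypothesis~(2) is only ever needed for the prefix $s'$, and when $t\notin U$ a short case analysis (ruling out $s'=1$ and $s'\in Y$, since each would force $t\in U$) shows $s'\in S^2$; every such product equals $p_{i_{y'},i_u}$ with row index $i_{y'}\in J_0$, which is exactly what is required. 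Finally, left pseudo-finiteness follows by duality: one checks $\mathcal{E}(S,x)^{\mathrm{op}}\cong\mathcal{E}(S^{\mathrm{op}},x)$, and $S^{\mathrm{op}}$ again satisfies the hypotheses of Construction~\ref{con2}, so the right-handed result applies to it.
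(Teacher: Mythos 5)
Your argument is correct, and it isolates exactly the right obstruction: condition (2) of Theorem~\ref{thm:con1} does fail for $M=\mathcal{E}(S,S;I,I;P)$ itself, since the entries of $P$ are only $\{x\}\cup S^2$, not all of $S$. Where you and the paper diverge is in how this is repaired. You re-run the proof of Theorem~\ref{thm:con1} inside $M$, observing that the hypothesis is only ever invoked for the inner factor $s'$ of a decomposition $t=y_1s'z$, and that when $t\notin Y\cup Y^2\cup Y^3$ this factor lies in $YS\subseteq S^2$ and hence equals $p_{i_{y'},i_u}$ with row index in the finite set $J_0$. The paper instead packages the same observation structurally: it sets $T=\{x\}\cup S^2$, notes that $T$ is an ideal of $S$ whose elements are \emph{precisely} the entries of $P$, passes to the ideal $K=\mathcal{E}(S,T;I,I;P)$ of $M$, checks conditions (1)--(4) of Theorem~\ref{thm:con1} for $K$ (condition (1) via $S=Y\cup Y^2\cup SY^2$, so $T=T^1X$ with $X=\{x\}\cup Y^2\cup Y^3$), and then invokes Theorem~\ref{thm:con1} as a black box together with Corollary~\ref{cor:rightideal}. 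The paper's route is shorter and reuses the theorem; yours is self-contained but duplicates its proof, and in exchange gives the explicit diameter bound $4$. Both rest on the same three facts: every element of $S^2$ is an entry $p_{i_y,i_u}$ with $y\in Y$, the row and column of $x$'s through $0$ permit the replacement step, and the zero of the $S$-act $I$ forces $I$ to be pseudo-finite. Two small points of care: in your reduction the element you arrive at is $y_1xz$ (the generic first factor $y_1\in Y$ of $t$ is not the distinguished element $x$), which still lies in $Y^3\subseteq U$; and in the final lifting step, when $j_1=j_2\notin J_0$ you must route the $J_0$-sequence through $0$ to effect the switch of the middle coordinate, which your diameter-$2$ bound for $I$ does allow.
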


\begin{proof}
Let $M=\mathcal{E}(S, x).$  We prove that $M$ is right pseudo-finite; the proof of left pseudo-finiteness is dual.

Let $T=\{x\}\cup S^2.$  Clearly $T$ is an ideal of $S.$  Notice that the entries of $P$ are precisely the elements of $T.$  Recalling Construction \ref{con1}, let $K=\mathcal{E}(S, T; I, I; P).$  It is easy to see that $K$ is an ideal of $M.$  Therefore, by Corollary \ref{cor:rightideal}, it suffices to prove that $K$ is right pseudo-finite.  We show that $K$ satisfies the conditions of Theorem \ref{thm:con1}.  Let $X=\{x\}\cup Y^2\cup Y^3\subseteq T$ where $Y$ is as given in Construction \ref{con2}.  Clearly $X$ is finite since $Y$ is finite.  Using the fact that $S=S^1Y,$ we have that
$$S=Y\cup SY=Y\cup S^1Y^2=Y\cup Y^2\cup SY^2.$$  It follows that $S^2\subseteq Y^2\cup Y^3\cup S^2Y^2\subseteq T^1X.$  Thus $T=\{x\}\cup S^2\subseteq T^1X,$ and hence $T=T^1X.$  Thus condition (1) holds.\par
Now consider $t\in T.$  Then $t=x$ or $t\in S^2.$  In the former case, we have $t=p_{0,i}$ for any $i\in I.$  In the latter case, since $S=YS^1$ it follows that $t=ys$ for some $s\in S,$ and hence $t=p_{i_y,i_s}.$  Thus condition (2) holds with $J_0=\{i_y : y\in X\}\cup\{0\}.$  Clearly (3) holds with $j_0=0.$  Finally, observe that the $S$-act $I$ is finitely generated by $J_0$ and contains the trivial subact $\{0\},$ which is certainly pseudo-finite, so $I$ is pseudo-finite by Lemma \ref{lem:subact}.  Hence, by Theorem \ref{thm:con1}, $M$ is right pseudo-finite.
\end{proof}

\section{No Minimal Ideal}
\label{sec:nomids}

In this section we discuss pseudo-finite semigroups without minimal ideals.
We have already seen that it is possible for a right pseudo-finite semigroup to have a minimal ideal that is not completely simple:
the Baer-Levi semigroup $\mathcal{BL}_X$ and the monoid $\mathcal{F}_X$ both have this property, as discussed in Section \ref{sec:pf,ideals}, as does the monoid constructed in Remark \ref{rem:2minid}.

Our first example of a pseudo-finite monoid with no minimal ideal will be another transformation monoid.

\begin{Ex}
Let $X$ be an infinite set, let $\{X_i : i\in\mathbb{N}\}$ be a partition of $X$ where $|X_i|=|X|$ for each $i\in\mathbb{N}
,$ and define
\[\mathcal{U}_X=\Bigl\{\alpha\in\mathcal{T}_X: X_i\alpha\subseteq \bigcup_{j\geq i}X_j\text{ for each }i\in\mathbb{N}\Bigr\}.\]
It can be easily shown that $\mathcal{U}_X$ is a submonoid of $\mathcal{T}_X.$
We claim that both the diagonal right act and diagonal left act of $\mathcal{U}_X$ are cyclic, so that $\mathcal{U}_X$ is both right and left pseudo-finite, but that $\mathcal{U}_X$ has no minimal ideal.

We first prove that the diagonal right act of $\mathcal{U}_X$ is cyclic.  For each $i\geq\mathbb{N},$ let $\{X_{i\alpha}, X_{i\beta}\}$ be a partition of $X_i$ into two sets with cardinality $|X|,$ and let \[\alpha_i:X_i\rightarrow X_{i\alpha},\, \beta_i:X_i\rightarrow X_{i\beta}\] be bijections.  Put
\[\alpha=\bigcup_{i\in\mathbb{N}}\alpha_i,\; \beta=\bigcup_{i\in\mathbb{N}}\beta_i,\]
so that $\alpha,\beta\in\mathcal{U}_X.$  We claim that $(\alpha, \beta)$ generates the diagonal right act of $\mathcal{U}_X.$  Indeed, let $(\gamma,\delta)\in\mathcal{U}_X.$  Define $\theta\in\mathcal{T}_X$ by
\[x\theta=\begin{cases}
x\alpha_i^{-1}\gamma&\text{ if }x\in X_{i\alpha}\\
x\beta_i^{-1}\delta&\text{ if }x\in X_{i\beta}.
\end{cases}\]
It is clear that $\theta\in\mathcal{U}_X$ and that $(\gamma, \delta)=(\alpha, \beta)\theta,$ as required.

We now consider the diagonal left act of $\mathcal{U}_X.$  For each $i\in I,$ let 
$$
\phi_i : X_i\to
\bigcup_{j\geq i}\bigl((X_i\times X_j)\cup (X_j\times X_i)\bigr)
$$
be a bijection, and put 
$$\phi=\bigcup_{i\in\mathbb{N}}\phi_i : X\to X\times X.$$  
It is straightforward to show that $\phi$ is a bijection.  Set $\alpha=\phi p_1$ and $\beta=\phi p_2,$ where $p_1, p_2$ are the projections onto the first and second coordinates, respectively.  We claim that $(\alpha, \beta)$ generates the diagonal left act of $\mathcal{U}_X.$  Indeed, let $(\gamma,\delta)\in\mathcal{U}_X.$  Define a map $$\theta : X\to X, x\mapsto(x\gamma, x\delta)\phi^{-1}.$$
Consider any $x\in X.$  Then $x\in X_i$ for some $i\in\mathbb{N},$ and hence $x\gamma\in X_j$ and $x\delta\in X_k$ for some $j, k\geq i.$  It follows from the definition of $\phi^{-1}$ that $x\theta\in X_m,$ where $m=\min(j, k).$  Thus $\theta\in\mathcal{U}_X.$  Furthermore, we have that 
$$x\gamma=(x\gamma, x\delta)p_1=(x\gamma, x\delta)(\phi^{-1}\phi)p_1=x(\theta\alpha),$$
so $\gamma=\theta\alpha.$  Similarly, $\delta=\theta\beta.$  Thus $(\gamma, \delta)=\theta(\alpha, \beta),$ as required.

Finally, suppose for a contradiction that $\mathcal{U}_X$ has a minimal ideal, and let $\alpha$ be any element of the minimal ideal.
Fix $x\in X_1$ and let $x\alpha\in X_k.$  
Choose $\beta\in\mathcal{U}_X$ such that $X\beta\subseteq\bigcup_{i\geq k+1}X_i.$
Now, there exist $\gamma, \delta\in\mathcal{U}_X$ such that $\alpha=\gamma\beta\delta.$
Since $(x\gamma)\beta\in\bigcup_{i\geq k+1}X_i,$ we have that $x\alpha=\bigl((x\gamma)\beta\bigr)\delta\in\bigcup_{i\geq k+1}X_i,$ contradicting the fact that $x\alpha\in X_k.$
\end{Ex}

A wealth of further examples can be obtained by utilising the constructions from the previous section, following this easy observation:

\begin{Prop}\label{prop:minideal}
For $S,$ $T,$ $I,$ $J$ and $P$ as in Construction \ref{con1}, the monoid $M=\mathcal{E}(S, T; I, J; P)$ has a minimal ideal if and only if $T$ has a minimal ideal.
\end{Prop}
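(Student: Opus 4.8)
The plan is to exploit the fact that $N=\mathcal{M}[T;I,J;P]$ is an ideal of $M$, together with a bookkeeping observation about the middle ($T$-)coordinate. Inspecting the multiplication in Construction~\ref{con1}, left or right multiplication by an element of $S^1$ leaves the middle coordinate of an element of $N$ unchanged, whereas multiplying by an element of $N$ alters it only by left/right multiplication by an element of $T$ (an entry of $P$ times a factor from $T$). Consequently, for any $(i,t,j)\in N$ and any $\alpha,\beta\in M$, the middle coordinate of $\alpha(i,t,j)\beta$ has the form $atb$ for some $a,b\in T^1$. I would use this, and the standard facts recalled in Section~\ref{sec:prelim} that a minimal ideal is simple and is contained in every ideal of the ambient semigroup.

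For the forward implication, suppose $M$ has a minimal ideal $K$. Since $N$ is an ideal, $K\subseteq N$; fix $\mu=(i_0,t_0,j_0)\in K$. Given an arbitrary $t\in T$, the principal ideal $M(i_0,t,j_0)M$ is an ideal of $M$ and hence contains $K$, so $\mu\in M(i_0,t,j_0)M$. By the middle-coordinate observation, $t_0=atb$ for some $a,b\in T^1$, that is, $t_0\in T^1tT^1$ and so $t_0\leqj t$. As $t$ was arbitrary, $t_0$ is a $\leqj$-minimum of $T$, whence $T^1t_0T^1$ is contained in every principal ideal, and therefore in every ideal, of $T$; thus $T^1t_0T^1$ is the minimal ideal of $T$.

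For the converse, suppose $T$ has minimal ideal $K_T$ and set $L=I\times K_T\times J$. First I would check that $L$ is an ideal of $M$: closure under the $S^1$-action is immediate, and closure under multiplication by $N$ reduces to $TPK_T\subseteq K_T$ and $K_TPT\subseteq K_T$, which hold because the entries of $P$ lie in $T$ and $K_T$ is an ideal of $T$. To prove $L$ is minimal, I will show $M(i,t,j)M=L$ for every $(i,t,j)\in L$; since $L$ is an ideal, any ideal $K'\subseteq L$ contains such a product and hence all of $L$. The inclusion $M(i,t,j)M\subseteq L$ is clear, and an arbitrary $(k,t',m)\in L$ is sought as $(k,u,m_1)(i,t,j)(k_1,u',m)=(k,\,ucu',\,m)$ with $c=p_{m_1,i}\,t\,p_{j,k_1}$; since $t\in K_T$ and $K_T$ is an ideal we have $c\in K_T$, so it remains to realise $t'$ as $ucu'$ with $u,u'\in T$.

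The main obstacle is exactly this last surjectivity step, and it is resolved by simplicity of the minimal ideal: for $c\in K_T$ the set $TcT$ satisfies $K_T=K_TcK_T\subseteq TcT\subseteq K_T$, where $K_TcK_T=K_T$ because $K_TcK_T$ is a nonempty ideal of the simple semigroup $K_T$. Hence $TcT=K_T\ni t'$, yielding the required $u,u'$. In the forward direction the only subtlety is the middle-coordinate bookkeeping, which shows that although the first and last coordinates can be moved arbitrarily only at the cost of a $T$-factor, that cost always places $t_0$ below $t$ in the $\jay$-order; once this is established, the remaining steps are routine.
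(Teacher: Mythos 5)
Your proof is correct and follows essentially the same route as the paper: the forward direction uses the observation that multiplying $(i,t,j)$ by elements of $M$ only alters the middle coordinate by factors from $T^1$, so the middle coordinate of an element of the minimal ideal is $\leqj$-below every element of $T$; the converse takes $I\times K_T\times J$ and uses simplicity of the minimal ideal $K_T$ of $T$ (via $TcT=K_T$, which matches the paper's use of $y(vp_{j_2,i_2}vp_{j_2,i_2}v)z=u$) to show every element generates the whole ideal. The only cosmetic difference is that the paper phrases the minimality step as showing all elements of $I\times K_T\times J$ are mutually $\jay$-related, whereas you show each principal ideal equals the whole set; these are equivalent.
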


%\begin{Prop}
%For $S$, $X$ and $P$ as in Construction \ref{con2}, the monoid $M=\mathcal{E}(S, X, P)$ has a minimal ideal if and only if $S$ has a minimal ideal.
%\end{Prop}

\begin{proof}
Suppose that $M$ has a minimal ideal, say $K.$
Then $K$ must contain an element of the form $(i, a, j).$  Let $t$ be any element of $T.$  Then $(i, a, j)=m(i, t, j)n$ for some $m, n\in M.$  It follows that $a\in T^1tT^1,$ so $a\leq_{\mathcal{J}}t.$  Thus $J_a$ is the minimal ideal of $T.$

Conversely, suppose that $T$ has a minimal ideal, say $L.$  We claim that the ideal $K=I\times L\times J$ is minimal in $M.$  
Consider $(i_1, u, j_1), (i_2, v, j_2)\in K$, where $u,v\in L$, $i_1,i_2\in I$, $j_1,j_2\in J$.
Since $vp_{j_2,i_2}vp_{j_2,i_2}v\in L$, there exist $y, z\in T^1$ such that
$yvp_{j_2,i_2}vp_{j_2,i_2}vz=u$. Note that $yv, vz\in T$, and 
\[
(i_1,u,j_1)=(i_1,yv,j_2)(i_2,v,j_2)(i_2,vz,j_1).
\]
It follows that $(i_1,u,j_1)\ \leqj \ (i_2,v,j_2)$, and, by symmetry, $(i_2,v,j_2)\ \leqj\ (i_1,u,j_1)$.
Hence the ideal $K$ is a $\jay$-class as well, and hence it is a minimal ideal, as required.
\end{proof}

It now follows that if we plug any $T$ with no minimal ideal into Construction \ref{con1}, while respecting the conditions of Theorem \ref{thm:con1}, we will obtain a right pseudo-finite monoid with no minimal ideal.  Moreover, if we put $S$ with no minimal ideal into Construction \ref{con2}, while respecting the conditions stipulated there, we will obtain a monoid with no minimal ideal that is both right pseudo-finite and left pseudo-finite.  These observations allow us to exhibit examples of pseudo-finite monoids without minimal ideals satisfying various prescribed properties.  This will complement our findings from Sections \ref{sec:csmi} and \ref{sec:mi} and show their natural limitations.

In Section \ref{sec:csmi} we discussed a number of subclasses of the class of regular monoids -- inverse, completely regular and orthodox monoids -- and they all turned out to have (completely simple) minimal ideals when pseudo-finite.  One therefore may wonder whether this can be extended to all regular monoids, or at least all idempotent-generated regular monoids.  To answer these questions in the negative we will resort to the variant $\mathcal{E}(S)$ from Construction \ref{con2}.

\begin{Prop}
\label{prop:Ereg}
Let $S$ be a monoid.  Then the monoid $\mathcal{E}(S)$ is regular if and only if $S$ is regular, and $\mathcal{E}(S)$ is idempotent-generated if and only if $S$ is idempotent-generated.
\end{Prop}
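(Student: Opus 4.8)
The plan is to exploit two structural features of $M:=\mathcal{E}(S)$. First, as a set $M$ is the disjoint union of the submonoid $S^1$ and the Rees matrix part $N:=\mathcal{M}[S;I,I;P]$, and one checks directly from the multiplication rules of Construction~\ref{con1} that $N$ is an ideal of $M$ (left or right multiplication by any element only alters the index coordinates, keeping the result inside $N$). Second, since $\mathcal{E}(S)=\mathcal{E}(S,1_S)$, the distinguished entry is $x=1_S$, so $p_{0,i}=p_{i,0}=1_S$ for every $i\in I$. This second fact drives the whole argument: whenever the index $0$ occurs on the inside of a product of triples the sandwich entry collapses, e.g.\ $(i,u,0)(k,v,j)=(i,uv,j)$ and $(i,u,j)(0,v,l)=(i,uv,l)$, so products routed through $0$ behave exactly like products in $S$.

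For regularity I would prove both implications using the ideal $N$ together with these collapsing products. For the forward direction, take $s\in S$ and suppose $s=sbs$ in $M$. If $b\in N$ then $sbs\in N$, contradicting $s\in S$; hence $b\in S^1$, and then $sbs$ is computed entirely inside $S^1$, giving $s=sbs$ with $b\in S$ (or $b=1$, forcing $s$ to be idempotent). Either way $s$ is regular in $S$, so $S$ is regular. Conversely, assume $S$ is regular. The new identity $1$ is idempotent, each $s\in S$ is regular in $S$ hence in $M$, and for a triple $(i,t,j)$, choosing an inverse $t'$ of $t$ in $S$ and using $p_{j,0}=p_{0,i}=1_S$ gives
\[
(i,t,j)(0,t',0)(i,t,j)=(i,\,t\cdot 1_S\cdot t'\cdot 1_S\cdot t,\,j)=(i,tt't,j)=(i,t,j),
\]
so every triple is regular and $M$ is regular.

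For idempotent generation I would again split along $N$. In the forward direction, suppose $M=\langle E(M)\rangle$ and let $s\in S$. Writing $s=e_1\cdots e_n$ with each $e_i\in E(M)$, no factor can lie in the ideal $N$ (else the product would lie in $N$, not in $S$); hence each $e_i\in E(S^1)=E(S)\cup\{1\}$, and deleting the occurrences of $1$ leaves a nonempty product of idempotents of $S$ equal to $s$ (nonempty since $s\neq 1$). Thus $S$ is idempotent-generated. Conversely, assume $S=\langle E(S)\rangle$. Then $1$ is idempotent and every $s\in S$ is a product of idempotents of $S\subseteq M$, so it remains to treat a triple $(i,t,j)$. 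Here I would first note that, because $p_{0,i}=p_{i,0}=1_S$, the elements $(i,1_S,0)$ and $(0,1_S,j)$ are idempotents, as is $(0,f,0)$ for each $f\in E(S)$; writing $t=f_1\cdots f_m$ with $f_k\in E(S)$ yields $(0,t,0)=(0,f_1,0)\cdots(0,f_m,0)$, and finally
\[
(i,t,j)=(i,1_S,0)\,(0,t,0)\,(0,1_S,j)
\]
exhibits $(i,t,j)$ as a product of idempotents. Hence $M$ is idempotent-generated.

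The computations are all short, so the only genuine obstacle is conceptual and comes at the outset: recognising that $N$ is an ideal disjoint from $S$ — which blocks any representation of an element of $S$ from using a triple — and that the normalised entries $p_{0,i}=p_{i,0}=1_S$ let every product routed through the $0$-index mimic the corresponding product in $S$. Once these two observations are secured, both equivalences reduce cleanly to the corresponding statements inside $S$, and the remaining care is merely bookkeeping of the index coordinates in the Rees matrix multiplications.
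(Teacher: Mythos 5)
Your proof is correct and follows essentially the same route as the paper: the forward implications via the fact that $S^1$ is a submonoid of $M$ whose complement $N$ is an ideal, and regularity of a triple via $(i,t,j)=(i,t,j)(0,t',0)(i,t,j)$. The only (minor) divergence is in the idempotent-generation converse, where you write $(i,t,j)=(i,1_S,0)(0,t,0)(0,1_S,j)$ and decompose $(0,t,0)$ using idempotents of $S$, whereas the paper uses $(i,s,j)=(i,1_S,0)(0,1_S,i_s)(i_{1_S},1_S,0)(0,1_S,j)$, which encodes $s$ in the sandwich entry $p_{i_s,i_{1_S}}=s$ and so expresses every triple as a product of idempotents without invoking idempotent-generation of $S$ at all; both computations are valid.
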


\begin{proof}
Let $M=\mathcal{E}(S).$  
The direct part of the two statements follow from the fact that $S^1$ is a submonoid of $M$ with an ideal complement.\par
Now suppose that $S$ is regular.  Clearly any element of $S^1$ is regular in $M,$ so consider $(i, s, j)\in\mathcal{M}(S; I, I; P).$  There exists $t\in S$ such that $s=sts,$ and hence, recalling that $p_{j,0}=p_{0,i}=1_S$, we have
$$(i, s, j)=(i, s, j)(0, t, 0)(i, s, j),$$
completing the proof that $S$ is regular.\par
Now suppose that $S$ is idempotent-generated.  Then certainly $S^1$ is idempotent-generated.  Note that all $(i, 1_S, 0), (0, 1_S, i)$ ($i\in I$) are idempotents.  For any $(i, s, j)\in\mathcal{M}(S; I, I; P),$ we have that
$$(i, s, j)=(i, 1_S, 0)(0, 1_S, i_s)(i_{1_S}, 1_S, 0)(0, 1_S, j),$$
so that $(i, s, j)$ is a product of idempotents.  Thus $M$ is idempotent-generated.
\end{proof}

Taking $S$ to be any (idempotent-generated) regular monoid with no minimal ideal, Corollary \ref{cor:con2pf} and Propositions \ref{prop:minideal} and \ref{prop:Ereg} together yield:

\begin{Cor}
There exist (idempotent-generated) regular monoids that are both right pseudo-finite and left pseudo-finite but have no minimal ideal.
\end{Cor}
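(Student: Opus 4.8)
The plan is to apply the three preceding results to a judiciously chosen base monoid $S$, so that the only real work is to produce an (idempotent-generated) regular monoid that fails to have a minimal ideal; everything after that is a direct invocation of Corollary~\ref{cor:con2pf} and Propositions~\ref{prop:minideal} and~\ref{prop:Ereg}. Note that any monoid $S$ automatically satisfies the hypotheses of Construction~\ref{con2}: taking $Y=\{1_S\}$ gives $S=YS^1=S^1Y$, so $\mathcal{E}(S)$ is defined. It is also worth recording that, since $\mathcal{E}(S)=\mathcal{E}(S,S;I,I;P)$, the semigroup playing the role of $T$ in Construction~\ref{con1} (and hence in Proposition~\ref{prop:minideal}) is $S$ itself.

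For the base monoid I would take a semilattice with no least element, for instance $S=(\mathbb{Z}\cup\{\infty\},\min)$, where $\infty$ is a top element serving as the identity. Being a commutative band, $S$ is inverse (hence regular) and coincides with its own set of idempotents (hence is trivially idempotent-generated), so a single choice handles both the bracketed and the unbracketed versions of the statement simultaneously. The ideals of $S$ are precisely the non-empty down-sets under the natural order, and any such down-set contains $\{x : x\leqel a\}$ for some $a$, which has the proper non-empty down-subset $\{x : x\leqel a-1\}$; since $\mathbb{Z}$ has no minimum, no ideal is minimal, and thus $S$ has no minimal ideal.

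With $S$ fixed, the assembly is immediate. By Corollary~\ref{cor:con2pf}, $\mathcal{E}(S)$ is both right and left pseudo-finite. By Proposition~\ref{prop:minideal} (applied with $T=S$), $\mathcal{E}(S)$ has a minimal ideal if and only if $S$ does, so $\mathcal{E}(S)$ has none. By Proposition~\ref{prop:Ereg}, $\mathcal{E}(S)$ is regular (respectively idempotent-generated) precisely because $S$ is; hence $\mathcal{E}(S)$ is an idempotent-generated regular monoid. Therefore $\mathcal{E}(S)$ is the required example. The only point demanding genuine thought is the construction of the base monoid $S$: one must secure regularity (or idempotent-generation) and the absence of a minimal ideal at the same time, and the semilattice chain above is the cleanest way to do so, being a band with a $\jay$-order isomorphic to $\mathbb{Z}$ with a top adjoined. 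Everything else is a mechanical combination of the cited propositions.
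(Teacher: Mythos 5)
Your proposal is correct and follows exactly the paper's route: the paper derives this corollary by taking $S$ to be any (idempotent-generated) regular monoid with no minimal ideal and combining Corollary~\ref{cor:con2pf} with Propositions~\ref{prop:minideal} and~\ref{prop:Ereg}, just as you do. The only difference is that you supply an explicit witness (a semilattice chain without a least element), which the paper leaves implicit; your verification of its properties is sound.
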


%In Section \ref{sec:mi} we saw that left compatibility with multiplication of either of the preorders $\leqel$ or $\leqj$ together with pseudo-finiteness implies the existence of a minimal ideal.  A natural question to ask is whether perhaps any of the Green's equivalences being a congruence has the same effect.  Again, this is not the case, as is shown in our final application of the constructions from the previous section.

Given Corollary \ref{cor:periodic,J-trivial}, it is natural to ask whether either of the properties of being periodic or being $\jay$-trivial is sufficient on its own to guarantee the existence of a minimal ideal in a pseudo-finite semigroup.  Again, we apply Construction \ref{con2} to show that this is not the case.

\begin{Prop}
There exist $\jay$-trivial monoids that are both right pseudo-finite and left pseudo-finite but have no minimal ideal.
\end{Prop}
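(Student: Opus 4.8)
The plan is to produce the required monoid as an instance of Construction~\ref{con2}. I would take $S=(\N,+)$, the additive semigroup of \emph{positive} integers. This $S$ is $\jay$-trivial, since $a\,\leqj\,b$ holds exactly when $a\in S^1bS^1=\{m:m\geq b\}$, so $\leqj$ is the reverse of the usual order and is therefore antisymmetric; it has no minimal ideal, because its ideals are the up-sets $\{n,n+1,\dots\}$, which descend without end; and it is generated both as a left ideal and as a right ideal by the finite set $Y=\{1\}$, since $1+\N^0=\N=\N^0+1$. Hence Construction~\ref{con2} applies with the fixed element $x=1$, and I set $M=\mathcal{E}(S,1)$. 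By Corollary~\ref{cor:con2pf}, $M$ is both right and left pseudo-finite. Moreover, the entry semigroup of the Rees matrix part of $M$ is $T=S=(\N,+)$, which has no minimal ideal, so Proposition~\ref{prop:minideal} guarantees that $M$ has no minimal ideal. Everything then reduces to proving that $M$ is $\jay$-trivial, which is the substantive part.

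For the $\jay$-triviality I would argue directly from the structure $M=S^1\cup\mathcal{M}[\N;I,I;P]$, in which $\mathcal{M}[\N;I,I;P]$ is an ideal and the matrix entries $p_{0,i}=p_{i,0}=1$ and $p_{i_s,i_t}=s+t$ are all positive integers. The key observation is a monotonicity of the middle coordinate: for triples, write $N((i,u,j))=u$; I would show that if $(i,u,j)\,\leqj\,(k,v,l)$, then $N((i,u,j))\geq N((k,v,l))$, with equality forcing $(i,u,j)=\sigma(k,v,l)\tau$ for some $\sigma,\tau\in S^1$. This follows by factoring $(i,u,j)=m(k,v,l)n$ and examining whether $m,n$ lie in $S^1$ or in $\mathcal{M}[\N;I,I;P]$: whenever one of $m,n$ is a triple, the resulting middle coordinate is $v$ plus a sum of triple middle-entries and matrix entries, each at least $1$, so $N$ strictly increases; only the case $m,n\in S^1$ preserves the middle. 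Consequently, if two triples are $\jay$-equivalent then their middle coordinates coincide, and both directions of the comparison must be realised with $S^1$-factors alone. Comparing first coordinates in the two resulting equations $(i,u,j)=\sigma(k,u,l)\tau$ and $(k,u,l)=\sigma'(i,u,j)\tau'$ via the index action $\sigma\cdot i_w=i_{\sigma+w}$ and $\sigma\cdot 0=0$, the indices satisfy mutual inequalities in the order on $\N$ and hence agree; the last coordinates agree by the dual argument. Thus the two triples are equal. To finish I would note that $S^1$ is itself $\jay$-trivial and that no triple lies $\leqj$-above an element of $S^1$ (since $\mathcal{M}[\N;I,I;P]$ is an ideal, $M\sigma M\not\subseteq\mathcal{M}[\N;I,I;P]$ for $\sigma\in S^1$), so there are no cross $\jay$-relations; hence $M$ is $\jay$-trivial.

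The main obstacle is precisely this $\jay$-triviality, and its conceptual crux is that $(\N,+)$ has no idempotents, and more specifically no element $s$ with $us=u$ or $su=u$. This is what guarantees that any genuine Rees-matrix multiplication strictly increases the middle coordinate, so that coordinates can only be altered by the index action, which is antisymmetric. It is also why $S$ must be taken to be a semigroup without identity rather than a monoid: were $S$ a monoid, its identity would appear among the products $b\,p_{c,k}$ and $p_{l,d}\,e$, allowing the middle entry to be preserved while the first and last coordinates are set arbitrarily, which would collapse all triples with a common middle into a single $\jay$-class and destroy $\jay$-triviality.
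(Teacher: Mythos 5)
Your proof is correct and follows essentially the same route as the paper: the paper runs Construction~\ref{con2} over any $S$ with $S=XS^1=S^1X$ for a finite set $X$ and $a\notin S^1aS\cup SaS^1$ for all $a\in S$ (offering the free semigroup as the example), and your $(\N,+)$ is precisely the free semigroup on one generator, with your middle-coordinate monotonicity being the numerical incarnation of the condition $a\notin S^1aS\cup SaS^1$. The case split in your $\jay$-triviality argument (a factor from the Rees matrix part strictly increases the middle coordinate, versus both factors in $S^1$ forcing mutual index comparisons) mirrors the paper's two cases exactly.
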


\begin{proof}
Let $S$ be a semigroup such that $S=XS^1=S^1X$ for some finite set $X\subseteq S,$ and $a\notin S^1aS\cup SaS^1$ for each $a\in S.$ (For example, we can take $S$ to be the free semigroup on a finite set $X.$)  Fix $x\in X$ and let $M=\mathcal{E}(S, x).$  By Corollary~\ref{cor:con2pf}, $M$ is both right pseudo-finite and left pseudo-finite.  Clearly $S$ is $\jay$-trivial and has no minimal ideal.  Thus $M$ has no minimal ideal by Proposition \ref{prop:minideal}.\par
We now show that $M$ is $\jay$-trivial.  Suppose for a contradiction that $M$ is not $\jay$-trivial.  Let $T=\mathcal{M}[S; I, I; P].$  It is clear that the restriction of the $\jay$-relation on $M$ to $S$ is the $\jay$-relation on $S,$ which is the equality relation since $S$ is $\jay$-trivial, and no elements of $S$ are $\jay$-related to elements of $T.$  Therefore, there must exist two distinct elements $(i, a, j), (k, b, l)\in T$ with $(i, a, j)\jay(k, b, l).$  Then there exist $u, v, u^{\prime}, v^{\prime}\in M$ such that
$$u(i, a, j)v=(k, b, l),\, u^{\prime}(k, b, l)v^{\prime}=(i, a, j).$$  There are two cases to consider.\par
(1) $u\in T$ or $v\in T.$  Assume without loss of generality that $u=(q, s, r)\in T.$  Then 
$$(k, b, l)=(q, s, r)(i, a, j)v=(q, sp_{ri}a, j)v.$$
If $v\in S,$ then $b=sp_{ri}a.$  If $v=(q^{\prime}, t, r^{\prime}),$ then
$(q, sp_{ri}a, j)v=(q, sp_{ri}ap_{jq^{\prime}}t, r^{\prime}),$ and hence $b=sp_{ri}ap_{jq^{\prime}}t.$  In either case we have that $b\leq_{\mathcal{J}}a,$ and $b\neq a$ since $a\notin S^1aS\cup SaS^1.$\par
Notice that $S^1(k, b, l)S^1\subseteq I\times\{b\}\times I.$  Therefore, since $a\neq b,$ we must have that $u^{\prime}\in T$ or $v^{\prime}\in T.$  Then, by the same argument as above, we have that $a\leq_{\mathcal{J}}b.$  But then $a\,\jay\,b,$ contradicting the fact that $S$ is $\jay$-trivial.\par
(2) $u, v\in S^1.$  In this case we have $u(i, a, j)v=(ui, a, jv),$ so $ui=k,$ $a=b$ and $jv=l.$  Since $(i, a, j)\neq(k, b, l),$ it follows that $i\neq k$ or $j\neq l.$  Assume without loss of generality that $i\neq k.$  Then $u\neq 1.$ Now, we must have that $u^{\prime}\in S^1,$ for otherwise, by the argument in case (1), we would have $a\neq b.$  Therefore, we have that $(i, a, j)=(u^{\prime}k, b, l)v^{\prime},$ whence $i=u^{\prime}k.$  Since $i\neq k,$ we conclude that $u^{\prime}\neq 1.$  We cannot have $i=0$ or $k=0,$ since this would imply that $i=k=0.$  Thus there exist $s, t\in S$ with $s\neq t$ such that $i=i_s$ and $k=i_t.$  It follows that $s=u^{\prime}t$ and $t=us.$  But then $s\,\el\,t$ and hence $s\,\jay\,t,$ contradicting the fact that $S$ is $\jay$-trivial.
\end{proof}

\begin{Prop}\label{prop:periodic}
For $S$ and $x$ as in Construction \ref{con2}, the monoid $M=\mathcal{E}(S, x)$ is periodic if and only if $S$ is periodic.
\end{Prop}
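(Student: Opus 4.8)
The plan is to establish both implications by analysing periodicity element by element, splitting the elements of $M=\mathcal{E}(S,x)$ into those lying in the submonoid $S^1$ and those lying in the Rees matrix part $\mathcal{M}[S;I,I;P]$. For the forward implication I would simply observe that $S$ is a subsemigroup of $M$ via $S\subseteq S^1\subseteq M$, and that the product of two elements of $S$ in $M$ agrees with their product in $S$ (the multiplication of $M$ extends that of $S^1$). Hence for any $a\in S$ the monogenic subsemigroup $\langle a\rangle$ computed inside $M$ coincides with the one computed inside $S$; so if $M$ is periodic, then every such subsemigroup is finite and $S$ is periodic.

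For the converse, assume $S$ is periodic. Elements of $S^1$ are immediately seen to be periodic: the identity is idempotent, and each $a\in S$ is periodic by hypothesis, its powers remaining within $S$. It remains to treat an arbitrary element $(i,t,j)$ of the Rees matrix part. The main step is the power formula
\[ (i,t,j)^n=\bigl(i,\,t(p_{j,i}t)^{n-1},\,j\bigr)=\bigl(i,\,(tp_{j,i})^{n-1}t,\,j\bigr)\qquad (n\geq 1), \]
which follows by a routine induction from the rule $(i,u,j)(i,v,j)=(i,\,up_{j,i}v,\,j)$ in $\mathcal{M}[S;I,I;P]$. The crucial point is that $p_{j,i}\in S$, since every entry of $P$ is either $x\in Y\subseteq S$ or a product $st\in S$; consequently $a:=tp_{j,i}$ is a genuine element of $S$, and the middle coordinate of $(i,t,j)^n$ is $a^{n-1}t$.

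To finish, I would invoke periodicity of $S$ to obtain $q,r\in\N$ with $a^{q+r}=a^q$, whence $a^m=a^{m+r}$ for all $m\geq q$. Choosing any $n$ with $n-1\geq q$, the power formula gives $(i,t,j)^n=(i,\,a^{n-1}t,\,j)=(i,\,a^{n-1+r}t,\,j)=(i,t,j)^{n+r}$, the outer coordinates $i$ and $j$ being unchanged throughout. Thus $(i,t,j)$ generates a finite monogenic subsemigroup, and every element of $M$ is periodic. I do not expect any genuine obstacle here: the only delicate point is the bookkeeping in the power formula and the reduction to the single element $tp_{j,i}$ of $S$, after which periodicity of $S$ supplies everything needed.
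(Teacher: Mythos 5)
Your proposal is correct and follows essentially the same route as the paper: the forward direction via closure of periodicity under subsemigroups, and the converse via the power formula $(i,t,j)^n=(i,(tp_{j,i})^{n-1}t,j)$ combined with periodicity of the element $tp_{j,i}\in S$. The paper's proof is exactly this computation, with the indices $q,r$ applied to $(sp_{j,i})$ in the same way.
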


\begin{proof}
Clearly periodicity is closed under subsemigroups, so $S$ is periodic if $M$ is.

Suppose that $S$ is periodic.  Since $S$ is a subsemigroup of $M,$ every monogenic subsemigroup of $M$ generated by an element of $S$ is contained in $S,$ and is hence finite since $S$ is periodic.  So, consider $m=(i, s, j)$ where $i, j\in I$ and $s\in S.$  Since $S$ is periodic, there exist $q, r\in\N$ such that and $(sp_{ji})^{q+r}=(sp_{ji})^q.$  Thus, we have
$$(i, s, j)^{q+1+r}=(i, (sp_{ji})^{q+r}s, j)=(i, (sp_{ji})^qs, j)=(i, s, j)^{q+1}.$$
This completes the proof.
\end{proof}

Taking $S$ to be any periodic semigroup with no minimal ideal (such as a semilattice with no zero), Corollary \ref{cor:con2pf} and Propositions \ref{prop:minideal} and \ref{prop:periodic} together yield: 

\begin{Cor}
There exist periodic monoids that are both right pseudo-finite and left pseudo-finite but have no minimal ideal.
\end{Cor}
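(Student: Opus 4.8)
The plan is to assemble the statement from the three facts about Construction~\ref{con2} already established, by feeding into that construction a single well-chosen semigroup. What I need is a periodic semigroup $S$ with no minimal ideal that satisfies the standing hypothesis of Construction~\ref{con2}, namely $S=YS^1=S^1Y$ for some finite set $Y\subseteq S$. The natural candidate is an infinite semilattice with no zero that is nonetheless finitely generated as a two-sided ideal; concretely I would take $S=(\N,\max)$, the chain of natural numbers under the operation $\max$.

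I would then check the three required properties of $S$. First, $S$ is a semilattice (a commutative band) because $\max(a,a)=a$, so each monogenic subsemigroup is a singleton and hence $S$ is periodic. Secondly, $0$ is an identity of $S$, since $\max(0,a)=a$; thus $S$ is a monoid and so $S=\{0\}S^1=S^1\{0\}$, furnishing the finite generation demanded by Construction~\ref{con2} with $Y=\{0\}$. Thirdly, $S$ has no zero, since a zero would be a greatest element of $\N$, which does not exist; consequently $S$ has no minimal ideal, because every ideal of $S$ is a non-empty up-set and any such up-set $\{s:s\geq a\}$ properly contains the smaller ideal $\{s:s\geq a+1\}$.

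With $S$ fixed I would put $x=0$ and form the monoid $M=\mathcal{E}(S,x)$ of Construction~\ref{con2}. Corollary~\ref{cor:con2pf} then shows at once that $M$ is both right and left pseudo-finite, and Proposition~\ref{prop:periodic} shows that $M$ is periodic because $S$ is. Finally, Construction~\ref{con2} presents $M$ as $\mathcal{E}(S,S;I,I;P)$, so Proposition~\ref{prop:minideal} applied with $T=S$ yields that $M$ has a minimal ideal if and only if $S$ does; as $S$ has none, neither does $M$. Hence $M$ is a periodic monoid that is both right and left pseudo-finite and has no minimal ideal.

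Every step after the choice of $S$ is a direct appeal to a cited result, so the only genuine content lies in verifying the elementary structure of $S$, and the single point meriting care is the absence of a minimal ideal. I anticipate no real obstacle here, as it reduces to the remark that the up-sets of $\N$ have no minimal member. The real work is simply the design choice of an $S$ that is simultaneously periodic, finitely generated as an ideal, and free of a minimal ideal, and the $\max$-semilattice delivers all three at once.
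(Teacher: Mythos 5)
Your proposal is correct and follows essentially the same route as the paper, which also obtains the result by feeding a periodic semigroup with no minimal ideal (it suggests a semilattice with no zero) into Construction~\ref{con2} and citing Corollary~\ref{cor:con2pf} and Propositions~\ref{prop:minideal} and~\ref{prop:periodic}. Your explicit choice of $(\N,\max)$, together with the check that it satisfies the finite-generation hypothesis $S=YS^1=S^1Y$ of Construction~\ref{con2}, is a careful instantiation of exactly that argument.
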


We conclude this section by showing that there exist weakly right reversible monoids that are right pseudo-finite but have no minimal ideal.

\begin{Prop}
Let $T$ be any right reversible monoid with no minimal ideal.  Let $I$ be a set such that $|I|=|T|,$ let $J=\{1, 2\},$ and let $P$ be a $J\times I$ matrix such that $p_{1,i}=1_T$ for all $i\in I$ and every element of $T$ appears in the second row.  Then the monoid $M=\mathcal{M}[T; I, J; P]^1$ is right pseudo-finite, weakly right reversible and has no minimal ideal.
\end{Prop}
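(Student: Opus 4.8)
The plan is to verify the three asserted properties one at a time, deriving right pseudo-finiteness and the absence of a minimal ideal from the general results already established for Construction~\ref{con1}, and giving a short direct computation for weak right reversibility.

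For \emph{right pseudo-finiteness} I regard $M=\mathcal{M}[T;I,J;P]^1$ as $\mathcal{E}(\emptyset,T;I,J;P)$ (the permitted empty-$S$ case of Construction~\ref{con1}) and check the four hypotheses of Theorem~\ref{thm:con1}. Since $T$ is a monoid, condition~(1) holds with the finite set $X=\{1_T\}$, because $T=T^1\cdot 1_T$. Condition~(2) holds with $J_0=J=\{1,2\}$: the set of matrix entries is $\{1_T\}\cup\{p_{2,i}:i\in I\}=T$, as every element of $T$ occurs in the second row. For condition~(3) take $j_0=1$, so that $p_{1,i}=1_T\in X$ for all $i$. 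Finally $J=\{1,2\}$ is finite and hence pseudo-finite as a right $S$-act, giving condition~(4). Theorem~\ref{thm:con1} then yields right pseudo-finiteness. The \emph{absence of a minimal ideal} is immediate from Proposition~\ref{prop:minideal}, which states that $\mathcal{E}(\emptyset,T;I,J;P)$ has a minimal ideal precisely when $T$ does; by hypothesis $T$ has none.

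The substantive step is \emph{weak right reversibility}, for which I first describe the principal left ideals of $M$. For $(i,t,j)\in I\times T\times J$ one computes $(k,u,l)(i,t,j)=(k,\,u\,p_{l,i}\,t,\,j)$; letting $k$ range over $I$, $u$ over $T$ and $l$ over $J$, and using $p_{1,i}=1_T$, the set of second coordinates obtained is $Tt\cup Tp_{2,i}t=Tt$, so that
\[
M(i,t,j)=I\times Tt\times\{j\},
\]
where $Tt$ is the principal left ideal of $T$ generated by $t$; the only other principal left ideal is $M\cdot 1=M$, which meets every other one. Now right reversibility of $T$ gives, for any $t,t'\in T$, elements $u,v\in T$ with $ut=vt'$, so $Tt\cap Tt'\neq\emptyset$ always. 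Hence $M(i,t,j)$ and $M(i',t',j')$ can be disjoint only when $j\neq j'$; since $J=\{1,2\}$, no three principal left ideals of $M$ are pairwise disjoint. Therefore in any infinite sequence $Ma_1,Ma_2,\dots$ two members must intersect, which is exactly the condition of Definition~\ref{def:wrr}.

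I expect the principal-left-ideal computation, together with the observation that right reversibility of $T$ forces all the ideals $Tt$ to intersect pairwise, to be the only point requiring care; once this is in place, the bound $|J|=2$ makes the pigeonhole argument immediate, and the remaining two properties are routine applications of Theorem~\ref{thm:con1} and Proposition~\ref{prop:minideal}. It is worth noting that $M$ is typically \emph{not} right reversible, which is consistent with Theorem~\ref{thm:rr}: a right reversible right pseudo-finite monoid would have a minimal ideal.
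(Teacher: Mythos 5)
Your proposal is correct and follows essentially the same route as the paper: right pseudo-finiteness via Theorem~\ref{thm:con1} applied to $\mathcal{E}(\emptyset,T;I,J;P)$, absence of a minimal ideal via Proposition~\ref{prop:minideal}, and weak right reversibility by combining the finiteness of $J$ (pigeonhole) with right reversibility of $T$. Your explicit computation $M(i,t,j)=I\times Tt\times\{j\}$ is a slightly more structural phrasing of the paper's direct verification that $(i,v,1)(i_p,t_p,j)=(i,w,1)(i_q,t_q,j)$, but the underlying argument is the same.
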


\begin{proof}
Recall that $M=\mathcal{E}(\emptyset, T; I, J; P).$
Clearly the conditions of Theorem \ref{thm:con1} are satisfied, so $M$ is right pseudo-finite.  Since $T$ has no minimal ideal, $M$ has no minimal ideal by Proposition \ref{prop:minideal}.  We now show that $M$ is weakly right reversible.  Consider any infinite sequence $$Mu_1, Mu_2, \dots$$ of principal left ideals of $M.$  Since $J$ is finite, there exists $j\in J$ such that there is an infinite subsequence $Mu_{k_1}, Mu_{k_2}, \dots$ where the third co-ordinate of each $u_{k_p}$ is $j.$  For each $p\in\N,$ let $u_{k_p}=(i_p, t_p, j).$  Since $T$ is right reversible, for any $p, q\in\N,$ there exist $v, w\in T$ such that $vt_p=wt_q.$  Picking any $i\in I,$ we then have that 
$$(i, v, 1)u_{k_p}=(i, vt_p, j)=(i, wt_q, j)=(i, w, 1)u_{k_q},$$ so that $Mu_{k_p}\cap Mu_{k_q}\neq\emptyset.$  Thus $M$ is weakly right reversible.
\end{proof}

\section{Open Problems and Future Research}
\label{sec:conclusion}

We conclude this paper with some open problems and possible directions for future research.

The notion of diameter has been a useful tool in this paper and is deserving of a more systematic investigation.  As indicated in Section \ref{subsec:defn}, this will be the topic of a subsequent paper.

A natural open problem arising from the work of this paper is to completely describe the minimal ideals in (right) pseudo-finite semigroups.  To put it another way, which simple semigroups can be the minimal ideal of a pseudo-finite semigroup?  Certainly not all simple semigroups have this property; e.g.\ any simple monoid that is not pseudo-finite, such as the bicyclic monoid or any infinite group, by Corollary \ref{cor:monoidideal}.

%One could attempt to classify those simple semigroups that can be the minimal ideal of a right pseudo-finite semigroup.  Certainly right pseudo-finite simple semigroups have this property.  Since ideals with an identity are homomorphic images of their oversemigroups, and right pseudo-finiteness is closed under homomorphic images, we deduce that a simple monoid is the minimal ideal of a right pseudo-finite semigroup if and only if it is right pseudo-finite.  (In particular, the bicyclic monoid is not the minimal ideal of a right pseudo-finite semigroup.)  The situation for simple semigroups in general, however, is rather different.  Indeed, as noted in Section \ref{sec:pf,ideals}, the full transformation $\mathcal{T}_X$ on an infinite set $X$ is right pseudo-finite and its minimal is an infinite right zero semigroup, which is certainly not right pseudo-finite.

%Another potential direction of research would be to find equivalent formulations of pseudo-finiteness.  
As noted in Section \ref{sec:intro}, in \cite{Dandan:2019} several equivalent characterisations were given for a monoid $S$ to have a finitely generated universal left congruence, including $S$ satisfying the homological finiteness property of being type left-$FP_1$.  This raises the question as to whether the property of being pseudo-finite could similarly be described in homological terms.

%Finally, building on the work of this paper and \cite{Miller:2020}, one could consider the condition that every right congruence $\rho$ is finitely generated and that there is a bound on the length of sequences required to connect any pair of elements that are related in $\rho.$

\section*{Acknowledgements}
This work was supported by the Engineering and Physical Sciences Research Council [EP/V002953/1].

\vspace{1em}

\end{document}